\newcommand\numberthis{\addtocounter{equation}{1}\tag{\theequation}}
\newcommand{\script}{\mathcal}
\newcommand{\parentheses}[1]{{\left( {#1} \right)}}
\newcommand{\p}{\parentheses}
\newcommand{\cl}{\mathrm{cl}}
\newcommand{\Set}[1]{{\left\lbrace {#1} \right\rbrace}}
\newcommand{\singleton}{\Set}
\newcommand{\cardinality}[1]{{\left\lvert {#1} \right\rvert}}
\def\set#1:#2{\Set{{#1} \colon {#2}}}
\newcommand{\rooot}[1]{\text{r}\p{#1}}
\newtheorem{theorem}{Theorem}\numberwithin{theorem}{section} 
\newtheorem{lemma}[theorem]{Lemma}
\newtheorem{prop}[theorem]{Proposition}
\newtheorem{defn}[theorem]{Definition}
\newtheorem{prob}[theorem]{Problem}
\newtheorem{conj}[theorem]{Conjecture}
\newtheorem{claim}[theorem]{Claim}
\newcommand{\N}{\mathbb{N}}
\DeclareMathOperator{\Ball}{Ball}
\tikzset{
square/.pic={
\node[draw,circle,scale=.3,fill] (b1) at (0,-2){};
\node[draw,circle,scale=.3,fill] (b2) at (0,-3){};
\node[draw,circle,scale=.3,fill] (start) at (0,-5){};
\node[draw,green,circle,scale=.3,fill] (green) at (-1,-2){};
\node[draw,yellow,circle,scale=.3,fill] (yellow) at (-1,-3){};
\draw[dotted] (0,0)--(b1);
\draw[dotted] (b2)--(start);
\draw (green)--(b1)--(b2)--(yellow);
\draw (start)--(0,-6);
\draw (-1,-6)--(1,-6)--(1,-7.6)--(1.3,-7.4)--(1.3,-8)--(-0.7,-8)--(-0.7,-6.4)--(-1,-6.6)--(-1,-6)--(1,-6);
}
}
\tikzset{
triangle/.pic={
\node[draw,circle,scale=.3,fill] (b1) at (0,-2){};
\node[draw,circle,scale=.3,fill] (b2) at (0,-3){};
\node[draw,circle,scale=.3,fill] (start) at (0,-5){};
\node[draw,yellow,circle,scale=.3,fill] (green) at (-1,-2){};
\node[draw,green,circle,scale=.3,fill] (yellow) at (-1,-3){};
\draw[dotted] (0,0)--(b1);
\draw[dotted] (b2)--(start);
\draw (green)--(b1)--(b2)--(yellow);
\draw (start)--(0,-6);
\draw (0,-6)--(1,-8)--(-1,-8)--(0,-6);
}
}
\tikzset{
decsquare/.pic={
\pic at (0,0) {square};
\pic[scale = 0.3,every node/.style={transform shape}] at (-0.6,-8) {square};
\pic[scale = 0.3, every node/.style={transform shape}] at (0.3,-8) {triangle};
\pic[scale = 0.3, every node/.style={transform shape}] at (1.2,-8) {square};
}
}
\tikzset{
dectriangle/.pic={
\pic at (0,0) {triangle};
\pic[scale = 0.3, every node/.style={transform shape}] at (-0.9,-8) {triangle};
\pic[scale = 0.3, every node/.style={transform shape}] at (0.0,-8) {triangle};
\pic[scale = 0.3, every node/.style={transform shape}] at (0.9,-8) {square};
}
}
\tikzset{
decdecsquare/.pic={
\pic at (0,0) {square};
\pic[scale = 0.3, every node/.style={transform shape}] at (-0.6,-8) {decsquare};
\pic[scale = 0.3, every node/.style={transform shape}] at (0.3,-8) {dectriangle};
\pic[scale = 0.3, every node/.style={transform shape}] at (1.2,-8) {decsquare};
}
}
\tikzset{
decdectriangle/.pic={
\pic at (0,0) {triangle};
\pic[scale = 0.3, every node/.style={transform shape}] at (-0.9,-8) {dectriangle};
\pic[scale = 0.3, every node/.style={transform shape}] at (0,-8) {dectriangle};
\pic[scale = 0.3, every node/.style={transform shape}] at (0.9,-8) {decsquare};
}
}
\title[Non-reconstructible locally finite trees]{A counterexample to the reconstruction conjecture for locally finite trees}
\author[Bowler, Erde, Heinig, Lehner \& Pitz]{Nathan Bowler, Joshua Erde, Peter Heinig, Florian Lehner, Max Pitz}
\address{University of Hamburg, Department of Mathematics, Bundesstra{\ss}e 55 (Geomatikum), 20146 Hamburg, Germany}
\email{nathan.bowler@uni-hamburg.de, joshua.erde@uni-hamburg.de, \newline  peter.heinig@math.uni-hamburg.de, florian.lehner@uni-hamburg.de, max.pitz@uni-hamburg.de}
\keywords{Reconstruction conjecture, reconstruction of locally finite trees, Harary-Schwenk-Scott conjecture, extension of partial isomorphisms, bare path, promise structure}
\subjclass[2010]{05C60, 05C63}
\begin{document}

\begin{abstract}
Two graphs $G$ and $H$ are \emph{hypomorphic} if there exists a bijection $\varphi \colon V(G) \rightarrow V(H)$ such that $G - v \cong H - \varphi(v)$ for each $v \in V(G)$. A graph $G$ is \emph{reconstructible} if $H \cong G$ for all $H$ hypomorphic to $G$. 

It is well known that not all infinite graphs are reconstructible. However, the Harary-Schwenk-Scott Conjecture from 1972 suggests that all locally finite trees are reconstructible. 

In this paper, we construct a counterexample to the Harary-Schwenk-Scott Conjecture. Our example also answers four other questions of Nash-Williams, Halin and Andreae on the reconstruction of infinite graphs.
\end{abstract}
\maketitle

\section{Introduction}
We say that two graphs $G$ and $H$ are \emph{(vertex-)}hypomorphic if there exists a bijection $\varphi$ between the vertices of $G$ and $H$ such that the induced subgraphs $G - v$ and $H - \varphi(v)$ are isomorphic for each vertex $v$ of $G$. Any such bijection is called a \emph{hypomorphism}. We say that a graph $G$ is \emph{reconstructible} if $H \cong G$ for every $H$ hypomorphic to $G$. The following conjecture, attributed to Kelly and Ulam, is perhaps one of the most famous unsolved problems in the theory of graphs.

\begin{conj}[The Reconstruction Conjecture]
Every finite graph with at least three vertices is reconstructible.
\end{conj}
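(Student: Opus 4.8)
The plan is, to be candid, \emph{not} to prove this in full: the Reconstruction Conjecture of Kelly and Ulam has resisted every attack since the 1940s, and a complete proof lies far beyond the reach of the techniques developed here (or anywhere else at present). What one \emph{can} do is assemble the partial results that make the statement believable, and it is these steps I would carry out. First I would establish \emph{Kelly's Lemma}: for every graph $F$ with strictly fewer vertices than $G$, the number of subgraphs of $G$ isomorphic to $F$ is determined by the deck $\{\, G - v : v \in V(G) \,\}$, since each copy of $F$ survives in exactly $|V(G)| - |V(F)|$ of the cards, so summing the corresponding subgraph counts over the deck recovers that number. From this, together with a few elementary arguments, one reads off that the number of edges, the degree sequence, the number of copies of any fixed small subgraph, connectedness, the block structure, and a whole battery of numerical and local invariants of $G$ are reconstructible.

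Next I would feed these invariants into case-by-case structural arguments to settle the classical special cases: disconnected graphs (induct on the components), regular graphs, trees (Kelly 1957 --- directly relevant to the present paper, which goes on to show that the tree case genuinely fails in the infinite, locally finite setting), graphs with enough symmetry, maximal planar graphs, and so on. Each of these is an \emph{ad hoc} bootstrap: one uses the reconstructed invariants to constrain the possible isomorphism types of $G$ so tightly that only one survives.

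The hard part --- indeed the reason the conjecture is still open after more than seventy years --- is exactly the step that none of these arguments performs in general: the passage from the complete list of reconstructible local and numerical data to the \emph{global} isomorphism type of $G$. No invariant-free ``gluing'' principle is known, and the infinite counterexamples (such as the locally finite tree constructed later in this paper) show that any successful proof in the finite case must exploit finiteness in an essential, and so far undiscovered, way. My honest proposal therefore stops here: prove Kelly's Lemma, harvest the invariants, dispatch the known special cases --- and leave the general statement open.
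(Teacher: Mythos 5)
You have correctly identified that this statement is the (still open) Kelly--Ulam Reconstruction Conjecture, which the paper merely records as background and does not attempt to prove; indeed, the paper's contribution is a \emph{counterexample} to its infinite, locally finite analogue. Your honest account — Kelly's Lemma, the reconstructible invariants it yields, the known special cases such as trees, and the acknowledgment that the global step remains undiscovered — is accurate and matches the paper's stance of leaving the finite conjecture open.
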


For an overview of results towards the Reconstruction Conjecture for finite graphs see the survey of Bondy and Hemminger \cite{BH77}. Harary \cite{H64} proposed the Reconstruction Conjecture for infinite graphs, however Fisher \cite{F69} found a counterexample, which was simplified to the following counterexample by Fisher, Graham and Harary \cite{FGH72}: consider the infinite tree $G$ in which every vertex has countably infinite degree, and the graph $H$ formed by taking two disjoint copies of $G$, which we will write as $G \sqcup G$. For each vertex $v$ of $G$, the induced subgraph $G - v$ is isomorphic to $G \sqcup G \sqcup \cdots$, a disjoint union of countably many copies of $G$, and similarly for each vertex $w$ of $H$, the induced subgraph $H - w$ is isomorphic to $G \sqcup G \sqcup \cdots$ as well. Therefore, any bijection from $V(G)$ to $V(H)$ is a hypomorphism, but $G$ and $H$ are clearly not isomorphic. Hence, the tree $G$ is not reconstructible.

These examples, however, contain vertices of infinite degree. Regarding locally finite graphs, Harary, Schwenk and Scott \cite{HSS72} showed that there exists a non-reconstructible locally finite forest. However, they conjectured that the Reconstruction Conjecture should hold for locally finite trees.

\begin{conj}[The Harary-Schwenk-Scott Conjecture]
\label{hssconj}
Every locally finite tree is reconstructible.
\end{conj}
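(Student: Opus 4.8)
\emph{A word on what is to be proved.} The statement above is a \emph{conjecture}, and the point of this paper is to \emph{refute} it; so in place of a proof we sketch a \emph{construction}. Concretely, the plan is to build two non-isomorphic locally finite trees $T$ and $T'$ together with a bijection $\varphi\colon V(T)\to V(T')$ such that $T-v\cong T'-\varphi(v)$ for every $v\in V(T)$.

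The model to keep in mind is the Fisher--Graham--Harary example $G$ versus $G\sqcup G$, where every vertex of $G$ has infinite degree: there, deleting any vertex turns either graph into a countable disjoint union of copies of $G$, so the two decks coincide. To mimic this in the locally finite world, the idea is to replace each infinite-degree vertex by an infinite but \emph{locally finite} gadget --- for instance a ray carrying a prescribed sequence of finite decorations --- so that ``deleting a vertex'' again becomes a bookkeeping operation that merely \emph{shifts} an infinite family of interchangeable pieces by one, without altering the isomorphism type of what remains. The decisive device for making such shifts invisible will be \emph{bare paths}: long induced paths all of whose internal vertices have degree $2$. Deleting a vertex on a bare path only shortens or splits it, a mild and controllable perturbation; and the \emph{lengths} of bare paths can be used as counters encoding integers, which can then absorb discrepancies when one compares $T-v$ with $T'-\varphi(v)$.

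To organise the construction I would introduce a \emph{promise structure}: a rooted, locally finite ``blueprint'' whose nodes carry requirements (admissible degrees, which gadget-types must eventually appear below a node, which bare-path lengths are permitted), from which an actual locally finite tree is \emph{realised} by a greedy level-by-level process that at each finite stage fulfils finitely many promises and defers the rest. The technical heart is then an \emph{extension lemma} for partial isomorphisms, in the spirit of a back-and-forth argument: any finite partial isomorphism between two realisations that is compatible with the promise structure extends to include one more vertex, by walking along a bare path and using the slack in the permitted path-lengths, together with the infinitely many interchangeable gadgets, to soak up the mismatch. Granting this, one fixes $\varphi$ to be the ``obvious'' shift-bijection between $T$ and $T'$ and checks, for each $v$, that deleting $v$ perturbs only a bounded amount of local structure, which the extension lemma re-absorbs to produce an isomorphism $T-v\cong T'-\varphi(v)$.

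The main obstacle is to make $T$ and $T'$ \emph{simultaneously} non-isomorphic and equideck: some global invariant --- a parity, or the presence of a distinguished vertex, or the behaviour of certain ends --- must distinguish $T$ from $T'$ and yet be destroyed by \emph{every} single vertex deletion, so the distinguishing feature has to be fragile enough to vanish from the whole deck while rigid enough that no global isomorphism can exist. Designing the promise structure so that the back-and-forth goes through on all of the $T-v$ and $T'-\varphi(v)$ but fails outright for $T$ versus $T'$ is where the real work lies; and the fact that local finiteness forbids the Fisher--Graham--Harary trick of literally adjoining a disjoint copy --- the ``doubling'' must instead be woven into a single connected locally finite tree --- is precisely what forces this delicate promise-structure machinery rather than an explicit closed-form example.
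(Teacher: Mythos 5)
You correctly read the situation: the ``statement'' is a conjecture that the paper refutes, and you propose a counterexample rather than a proof. Your high-level ingredients also match the paper's toolkit --- a recursive construction, back-and-forth extension of partial isomorphisms, long bare paths, and something called a ``promise structure.'' But several of your specific mechanisms diverge from the paper's, and the divergences are exactly where a genuine gap would open up if one tried to carry your sketch out.

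First, the paper does not use a ``shift bijection'' between interchangeable ray-gadgets. That picture is borrowed from the Fisher--Graham--Harary argument, where deleting a vertex reshuffles infinitely many identical components; in the locally finite connected setting this does not survive. Instead, the hypomorphism $\varphi=\bigcup_n\varphi_n$ is assembled one vertex at a time: at stage $n$ one picks an unpaired vertex $v$ of $T_n$, \emph{builds from scratch} a tree $\hat T_n\not\cong T_n$ containing a vertex $\hat v$ with $T_n-v\cong\hat T_n-\hat v$ (by rearranging the components of $T_n-v$ around a new root), and splices $\hat T_n$ into $S_{n+1}$. There is no single ``shift'' and no single fragile invariant killed by every deletion; each pair $(x_j,y_j)$ receives a bespoke isomorphism $h_{j}$ that is grown coherently across all later stages. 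Your closing worry about needing one global invariant simultaneously rigid and fragile is a red herring relative to the actual construction.

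Second, your intended use of bare paths as ``counters encoding integers'' to ``absorb discrepancies'' is not what the paper does and would not by itself give non-isomorphism. The paper uses bare paths as \emph{fire breaks}: every extension past a stage-$n$ coloured leaf begins with a bare path strictly longer than any bare path inside $T_n\sqcup S_n$. Hence, deleting all bare paths of length $>k_n$ from the final $T$ isolates $T_n$ as a component (and likewise $S_n$ inside $S$), so any global isomorphism $T\to S$ would have to carry $T_n$ onto $S_n$ --- which is arranged to be impossible at every finite stage. That is the actual non-isomorphism (and non-embeddability) engine, and it is absent from your sketch.

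Third, the paper's ``promise structure'' is not a blueprint of admissible local types realised greedily; it is a forest $G$ together with directed edges $\vec P$ and coloured leaf sets $\mathcal L$, and its \emph{closure} $\cl(G)$ is obtained by repeatedly gluing a rooted copy of $G(\vec p_i)$ at every $i$-coloured leaf (and at all new $i$-coloured leaves so created), iterated transfinitely to a direct limit. The point of this device is subtle and is what makes the back-and-forth close: as soon as you modify what hangs beyond one red leaf, the commitment to ``do the same thing behind every red leaf'' forces modifications everywhere, including inside the freshly glued pieces, and the closure iterates until consistency. Without this fixed-point mechanism, the step in your sketch where the extension lemma ``re-absorbs'' the perturbation is precisely the step that would fail: you would either break earlier partial isomorphisms or break the uniformity of what sits behind same-coloured leaves. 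So the sketch captures the flavour of the paper's strategy but is missing its two load-bearing ideas --- the bare-path isolation argument and the leaf-colour closure operation.
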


This conjecture has been verified in a number of special cases. Kelly \cite{kelly1957} showed that finite trees on at least three vertices are reconstructible. Bondy and Hemminger \cite{BH74} showed that every tree with at least two but a finite number of ends is reconstructible, and Thomassen \cite{T78} showed that this also holds for one-ended trees. Andreae \cite{A81} proved that also every tree with countably many ends is reconstructible.

A survey of Nash-Williams \cite{NW91} on the subject of reconstruction problems in infinite graphs gave the following three main open problems in this area, which have remained open until now.

\begin{prob}[Nash-Williams]\label{p:one}
Is every locally finite connected infinite graph reconstructible?
\end{prob}

\begin{prob}[Nash-Williams]\label{p:two}
If two infinite trees are hypomorphic, are they also isomorphic?
\end{prob}

\begin{prob}[Halin]\label{p:three}
If $G$ and $H$ are hypomorphic, do there exist embeddings $G \hookrightarrow H$ and $H \hookrightarrow G$?
\end{prob}

Problem~\ref{p:two} has been emphasized in Andreae's \cite{ANDREAE1994183}, which contains partial affirmative results on Problem~\ref{p:two}. A positive answer to Problem~\ref{p:one} or \ref{p:two} would verify the Harary-Schwenk-Scott Conjecture. In this paper we construct a pair of trees which are not only a counterexample to the Harary-Schwenk-Scott Conjecture, but also answer the three questions of Nash-Williams and Halin in the negative. Our counterexample will in fact have bounded degree.

\begin{theorem}\label{t:main}
There are two (vertex)-hypomorphic infinite trees $T$ and $S$ with maximum degree three such that there is no embedding $T \hookrightarrow S$ or $S \hookrightarrow T$.
\end{theorem}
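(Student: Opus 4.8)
The plan is to construct $T$ and $S$ explicitly from a common "backbone" tree that branches in a self-similar way, so that removing any vertex produces isomorphic fragments, yet the global structures are genuinely different. The first thing I would do is identify a rigid gadget — a rooted tree $B$ — playing the role of the "square" and "triangle" pics in the TikZ preamble: a long bare path (a path whose internal vertices have degree $2$ in the whole tree) hanging below a root, decorated at its far end by a controlled bouquet of smaller copies of similar gadgets, with a colour-bit (the green/yellow vertices) encoding one bit of information. The colours let me build two "types" of gadget, $\text{square}$ and $\text{triangle}$, that are non-isomorphic as rooted trees but become isomorphic once the colour-carrying vertices are deleted. Iterating the decoration (the \texttt{decsquare}, \texttt{decdecsquare} pics) gives, in the limit, two infinite trees whose finite truncations differ only in the colours near the bottom, which will be exactly what a hypomorphism can exploit.

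Next I would set up the \emph{promise structure} machinery advertised in the keywords: rather than fixing the trees outright, I would specify $T$ and $S$ together with a countable family of "promises" — pairs (bare path, target isomorphism type) — and prove a gluing lemma stating that any tree carrying a consistent set of promises can have them all fulfilled simultaneously by subdividing bare paths and attaching the promised subtrees. The key point is that along a sufficiently long bare path one has enough room to interpolate between two prescribed rooted trees that agree "up to finite depth", because deleting a single vertex of a long bare path just lengthens the path on one side and shortens it on the other, which is invisible. I would then define the hypomorphism $\varphi\colon V(T)\to V(S)$ vertex-class by vertex-class: for $v$ on a bare path, $T-v$ and $S-\varphi(v)$ are isomorphic because both are "the backbone with one path-edge redistributed"; for $v$ a branch vertex or colour vertex, deleting it detaches a finite decorated piece and the colour information is lost, so the two deletions again match. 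Verifying $T\not\cong S$ then comes down to a parity/counting invariant: reading off the colour-bits along the canonical ray (or the pattern of square-vs-triangle decorations at each level) gives a sequence that differs between $T$ and $S$ in a way no isomorphism can repair.

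The \textbf{non-embeddability} is the part I expect to be the main obstacle, and it is why the gadgets must be engineered carefully rather than taken off the shelf. An embedding $T\hookrightarrow S$ need not respect the branch structure, so I cannot argue purely locally; instead I would build into the construction a \emph{minor-monotone-style obstruction}. Concretely, I would arrange that $T$ contains, cofinally, decorated gadgets of "square-type at every level" while $S$ is forced to use a triangle at infinitely many prescribed levels (and vice versa), and show that the length of the bare paths grows fast enough that any embedding must send the $n$-th backbone gadget of $T$ into the $n$-th backbone gadget of $S$ — a rigidity lemma proved by induction on levels, using that a short bare path cannot embed across a long one while preserving the requisite number of disjoint branches below it. Once embeddings are pinned to be "level-preserving" in this sense, the square/triangle colour mismatch at a single level already obstructs the embedding, and symmetrically for $S\hookrightarrow T$. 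The technical heart is therefore the rigidity lemma: quantify "how long is long enough" for the bare paths, which I would handle by making the path length at level $n$ exceed the total size of everything attached at levels $\le n$.
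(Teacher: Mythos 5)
There is a genuine gap at the very centre of your proposal: you assert that $T-v\cong S-\varphi(v)$ will hold but supply no mechanism that produces the isomorphisms. For a path-interior vertex your argument is that ``deleting a single vertex of a long bare path just lengthens the path on one side and shortens it on the other, which is invisible''; but deleting such a vertex \emph{disconnects} the tree into two components, and there is nothing invisible about that---the two halves of $T-v$ must be matched, separately, with the two halves of $S-\varphi(v)$, and it is precisely this matching that the construction must engineer. Likewise, for branch and colour vertices you claim the ``colour information is lost, so the two deletions again match,'' but losing a distinguishing feature near $v$ does not make two infinite trees isomorphic globally. What is missing is exactly the recursive back-and-forth that the paper performs: at stage $n$ one takes the least vertex $v$ of the current finite approximation $T_n$ not yet handled, builds a twisted copy $\hat T_n$ with a vertex $\hat v$ such that $T_n-v\cong\hat T_n-\hat v$, grafts $\hat T_n$ into $S_{n+1}$, and then--since this grafting breaks the earlier partial hypomorphisms--closes everything off by recursively regluing the same rooted trees behind every red and blue leaf (the ``promise closure''), so that the colour-preserving partial isomorphisms $h_{n,x}\colon T_n-x\to S_n-\varphi_n(x)$ extend to $T_{n+1}$ and $S_{n+1}$. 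A fixed explicit backbone with square/triangle decorations at prescribed levels offers no handle on this: if $T$ and $S$ differ by a rigid local feature at infinitely many levels, removing one vertex cannot mask all those differences, and you never say why it would.

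You also misread the r\^ole of the promise structures. In the paper a promise is attached to a \emph{leaf} and says ``whatever we eventually grow beyond this leaf must be isomorphic, as a rooted tree, to whatever grows beyond this designated edge''; the closure operation then recursively glues copies onto all promised leaves until every promise is satisfied, and crucially the resulting isomorphisms are colour-respecting, which is what lets the partial hypomorphisms survive each stage. Your version---subdividing bare paths to interpolate between two rooted trees that ``agree up to finite depth''---is a different (and weaker) device and does not give you the colour-preserving leaf extensions that the back-and-forth needs. Finally, on non-embeddability: the quantitative idea of making the path length at level $n$ exceed the total attached size is a reasonable starting point, but you would still have to show it is \emph{compatible} with having arranged all the vertex-deletion isomorphisms, which is nontrivial; the paper instead encodes rigidity via binary trees of strictly increasing height $b_n$ and a carefully maintained non-embedding invariant ($\dagger$8)--($\dagger$10) on every finite stage, and deduces $T\not\hookrightarrow S$ from the fact that after deleting all bare paths longer than $k_n$ the component of $t_0$ in $T$ is a bare extension of $T_n$ which cannot be sent into a bare extension of $S_n$.
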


Our example also provides a strong answer to a question by Andreae \cite{ANDREAE1982258} about edge-reconstructibility. Two graphs $G$ and $H$ are \emph{edge-hypomorphic} if there exists a bijection $\varphi \colon E(G) \rightarrow E(H)$ such that $G - e \cong H - \varphi(e)$ for each $e \in E(G)$. A graph $G$ is \emph{edge-reconstructible} if $H \cong G$ for all $H$ edge-hypomorphic to $G$. In \cite{ANDREAE1982258} Andreae constructed countable forests which are not edge-reconstructible, but conjectured that no locally finite such examples can exist.

\begin{prob}[Andreae]\label{p:four1}
Is every locally finite graph with infinitely many edges edge-reconstructible?
\end{prob}

Our example answers Problem~\ref{p:four1} in the negative: the trees $T$ and $S$ we construct for Theorem~\ref{t:main} will also be edge-hypomorphic. Besides answering Problem~\ref{p:four1}, this appears to be the first known example of two non-isomorphic graphs that are \emph{simultaneously} vertex- and edge-hypomorphic.

The Reconstruction Conjecture has also been considered for general locally finite graphs. Nash-Williams \cite{NW87} showed that any locally finite graph with at least three, but a finite number of ends is reconstructible, and in \cite{NW912}, he established the same result for two-ended graphs. The following problems, also from \cite{NW91}, remain open:

\begin{prob}[Nash-Williams]\label{p:four}
Is every locally finite graph with exactly one end reconstructible?
\end{prob}

\begin{prob}[Nash-Williams]\label{p:five}
Is every locally finite graph with countably many ends reconstructible?
\end{prob}

In a paper in preparation \cite{BEHLP17}, we will extend the methods developed in the present paper to also construct counterexamples to Problems \ref{p:four} and \ref{p:five}.

This paper is organised as follows. In the next section we will give a short, high-level overview of our counterexample to the Harary-Schwenk-Scott Conjecture. In Section \ref{s:closure}, we will develop the technical tools necessary for our construction, and in Section \ref{s:proof}, we will prove Theorem \ref{t:main}.

For standard graph theoretical concepts we follow the notation in \cite{D10}.

\section{Sketch of the construction}\label{s:outline}

In this section we sketch the main ideas of the construction. For the sake of simplicity we only indicate how to ensure that the trees $T$ and $S$ are vertex-hypomorphic and non-isomorphic, but not that they are edge-hypomorphic as well, nor that neither embeds into the other.

Our plan is to build the trees $T$ and $S$ recursively, where at each step of the construction we ensure for some vertex $v$ already chosen for $T$ that there is a corresponding vertex $w$ of $S$ with $T - v \cong S - w$, or vice versa. This will ensure that by the end of the construction, the trees we have built are hypomorphic. 

More precisely, at step $n$ we will construct subtrees $T_n$ and $S_n$ of our eventual trees, where some of the leaves of these subtrees have been coloured in two colours, say red and blue. 
We will only further extend the trees from these coloured leaves, and we will extend from leaves of the same colour in the same way.

That is, the plan is that there should be two further rooted trees $R$ and $B$ such that $T$ can be obtained from $T_n$ by attaching copies of $R$ at all red leaves and copies of $B$ at all blue leaves, and $S$ can be obtained from $S_n$ in the same way. At step $n$, however, we do not yet know what these trees $R$ and $B$ will eventually be. 

Nevertheless, we can ensure that the induced subgraphs, $T-v$ and $S-w$, of the vertices we have dealt with so far really will match up. More precisely, by step $n$ we have vertices $x_1, \ldots, x_n$ of $T_n$ and $y_1, \ldots, y_n$ of $S_n$ for which we intend that $T - x_j$ should be isomorphic to $S - y_j$ for each $j$. We ensure this by arranging that for each $j$ there is an isomorphism from $T_n - x_j$ to $S_n - y_j$ which preserves the colours of the leaves.

The $T_n$ will be nested, and we will take $T$ to be the union of all of them; similarly the $S_n$ will be nested and we take $S$ to be the union of all of them.

There is a trick to ensure that $T$ and $S$ do not end up being isomorphic. 
First we ensure, for each $n$, that there is no isomorphism from $T_n$ to $S_n$. 
We also ensure that the part of $T$ or $S$ beyond any coloured leaf of $T_n$ or $S_n$ begins with a long non-branching path (called a \emph{bare} path), longer than any such path appearing in $T_n$ or $S_n$. Call the length of these long paths $k_{n+1}$.

Suppose now for a contradiction that there is an isomorphism from $T$ to $S$. 
Then there must exist some large $n$ such that the isomorphism sends some vertex $t$ of $T_n$ to a vertex $s$ of $S_n$. 
However, $T_n$ is the component of $T$ containing $t$ after all bare paths of length $k_{n+1}$ have been removed\footnote{Here and throughout this section we will omit minor technical details for brevity.}, and so it must map isomorphically onto the component of $S$ containing $s$ after all bare paths of length $k_{n+1}$ have been removed, namely onto $S_n$. 
However, there is no isomorphism from $T_n$ onto $S_n$, so we have the desired contradiction.

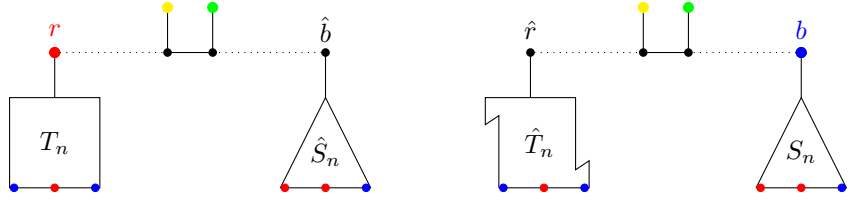
\begin{figure}[ht!]
\begin{subfigure}[t]{0.5\textwidth}
\centering
\begin{tikzpicture}[scale=0.6]

\node[draw, red, circle,scale=.4, fill] (Ttop) at (1,3) {};
\node[text=red] at (1,3.5) {$r$};
\node[] at (7,3.5) {$\hat{b}$};

\node[draw, circle,scale=.3, fill] (Stop) at (7,3) {};
  
\draw (0,0) -- (2,0) -- (2,2) -- (0,2) -- (0,0);
\draw (1,2) -- (1,3);

\node at (1,1) {$T_n$};
  
\draw (6,0) -- (8,0) -- (7,2) -- (6,0);
\draw (7,2) -- (7,3);

\node at (7,0.8) {$\hat{S}_n$};

\node[draw, red, circle,scale=.3, fill] () at (1,.0) {};
\node[draw, blue, circle,scale=.3, fill] () at (0.1,.0) {};
\node[draw, blue, circle,scale=.3, fill] () at (1.9,.0) {};

\node[draw, red, circle,scale=.3, fill] () at (6.1,.0) {};
\node[draw, red, circle,scale=.3, fill] () at  (7,.0) {};
\node[draw, blue, circle,scale=.3, fill] () at (7.9,.0) {};
  
\node[draw, circle,scale=.3, fill] (gadget1) at (3.5,3) {};
\node[draw, circle,scale=.3, fill] (gadget2) at (4.5,3) {};
\draw (gadget1) -- (gadget2);
    
\draw[dotted] (Ttop) -- (gadget1);
\draw[dotted] (Stop) -- (gadget2);
    
\draw(gadget1) -- (3.5,4);
\node[draw, yellow, circle,scale=.4, fill] (leaf1) at (3.5,4) {};
\node[draw, green, circle,scale=.4, fill] (leaf1) at (4.5,4) {};
\draw (gadget2) -- (leaf1);
\node[draw, red, circle,scale=.4, fill] (Ttop) at (1,3) {};     
\end{tikzpicture}

\end{subfigure}%
\begin{subfigure}[t]{0.5\textwidth}
\centering
\begin{tikzpicture}[scale=0.6]

\node[draw, circle,scale=.3, fill] (Ttop) at (1,3) {};
\node[text=blue] at (7,3.5) {$b$};
\node[] at (1,3.5) {$\hat{r}$};

\node[draw, blue, circle,scale=.4, fill] (Stop) at (7,3) {};
  
\draw (0.3,0) -- (2.3,0) -- (2.3,0.6) -- (2,0.4)-- (2,2) -- (0,2) -- (0,1.4)--(0.3,1.6)--(0.3,0);
\draw (1,2) -- (1,3);

\node at (1.2,1) {$\hat{T}_n$};
  
\draw (6,0) -- (8,0) -- (7,2) -- (6,0);
\draw (7,2) -- (7,3);

\node at (7,0.8) {$S_n$};

\node[draw, red, circle,scale=.3, fill] () at (1.3,.0) {};
\node[draw, blue, circle,scale=.3, fill] () at (0.4,.0) {};
\node[draw, blue, circle,scale=.3, fill] () at (2.2,.0) {};

\node[draw, red, circle,scale=.3, fill] () at (6.1,.0) {};
\node[draw, red, circle,scale=.3, fill] () at  (7,.0) {};
\node[draw, blue, circle,scale=.3, fill] () at (7.9,.0) {};
  
\node[draw, circle,scale=.3, fill] (gadget1) at (3.5,3) {};
\node[draw, circle,scale=.3, fill] (gadget2) at (4.5,3) {};
\draw (gadget1) -- (gadget2);
    
\draw[dotted] (Ttop) -- (gadget1);
\draw[dotted] (Stop) -- (gadget2);
    
\draw(gadget1) -- (3.5,4);
\node[draw, yellow, circle,scale=.4, fill] (leaf1) at (3.5,4) {};
\node[draw, green, circle,scale=.4, fill] (leaf1) at (4.5,4) {};
\draw (gadget2) -- (leaf1);
\node[draw, blue, circle,scale=.4, fill] (Stop) at (7,3) {}; 
\end{tikzpicture}
\end{subfigure}
\caption{A first approximation of $T_{n+1}$ on the left, and $S_{n+1}$ on the right. All dotted lines are non-branching paths of length  $k_{n+1}$.}
\label{sketchfig1}
\end{figure}

Suppose now that we have already constructed $T_n$ and $S_n$ and wish to construct $T_{n+1}$ and $S_{n+1}$. Suppose further that we are given a vertex $v$ of $T_n$ for which we wish to find a partner $w$ in $S_{n+1}$ so that $T - v$ and $S - w$ are isomorphic. We begin by building a tree $\hat{T}_n \not \cong T_n$ which has some vertex $w$ such that $T_n - v \cong \hat{T}_n - w$. This can be done by taking the components of $T_n - v$ and arranging them suitably around the new vertex $w$.

We will take $S_{n+1}$ to include $S_n$ and $\hat{T}_n$, with the copies of red and blue leaves in $\hat{T}_n$ also coloured red and blue respectively. As indicated on the right in Figure \ref{sketchfig1}, we add paths of length $k_{n+1}$ to some blue leaf $b$ of $S_n$ and to some red leaf $\hat{r}$ of $\hat{T}_n$ and join these paths at their other endpoints by some edge $e_n$. We also join two new leaves $y$ and $g$ to the endvertices of $e_n$. We colour the leaf $y$ yellow and the leaf $g$ green (to avoid confusion with the red and blue leaves from step $n$, we take the two colours applied to the leaves in step $n+1$ to be yellow and green).

To ensure that $T_{n+1} - v \cong S_{n+1} - w$, we take $T_{n+1}$ to include $T_n$ together with a copy $\hat{S}_n$ of $S_n$, coloured appropriately and joined up in the same way, as indicated on the left in Figure \ref{sketchfig1}.

The only problem up to this point is that we have not been faithful to our intention of extending in the same way at each red or blue leaf of $T_n$ and $S_n$. Thus, we now copy the same subgraph appearing beyond $r$ in Fig.~\ref{sketchfig1}, including its coloured leaves,  onto all the other red leaves of $S_n$ and $T_n$. Similarly we copy the subgraph appearing beyond the blue leaf $b$ of $S_n$ onto all other blue leaves of $S_n$ and $T_n$.

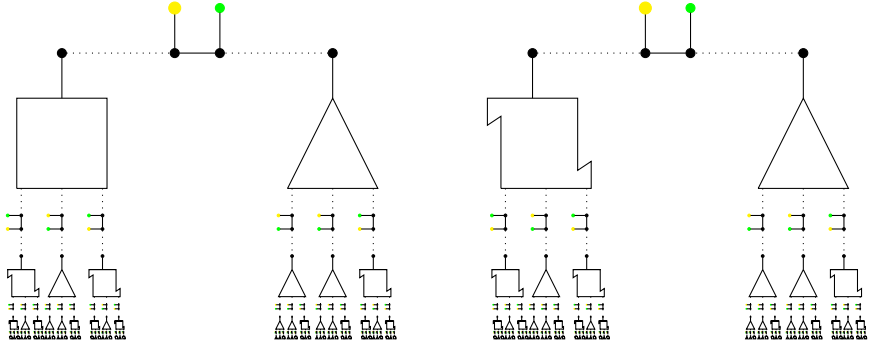
\begin{figure}[ht!]
\begin{subfigure}[t]{0.5\textwidth}
\centering
\begin{tikzpicture}[x=.5cm,y=.5cm, scale=1.2, every node/.style={transform shape}]
\node[draw, circle,scale=.3, fill] (Ttop) at (1,3) {};

\node[draw, circle,scale=.3, fill] (Stop) at (7,3) {};
  
\draw (0,0) -- (2,0) -- (2,2) -- (0,2) -- (0,0);
\draw (1,2) -- (1,3);

\draw (6,0) -- (8,0) -- (7,2) -- (6,0);
\draw (7,2) -- (7,3);

\pic[scale=0.3, every node/.style={transform shape}] at (0.1,0) {decdecsquare};
\pic[scale=0.3, every node/.style={transform shape}] at (1,0) {decdectriangle};
\pic[scale=0.3, every node/.style={transform shape}] at (1.9,0) {decdecsquare};

\pic[scale=0.3, every node/.style={transform shape}] at (6.1,0) {decdectriangle};
\pic[scale=0.3, every node/.style={transform shape}] at (7,0) {decdectriangle};
\pic[scale=0.3, every node/.style={transform shape}] at (7.9,0) {decdecsquare};
  
\node[draw, circle,scale=.3, fill] (gadget1) at (3.5,3) {};
\node[draw, circle,scale=.3, fill] (gadget2) at (4.5,3) {};
\draw (gadget1) -- (gadget2);
    
\draw[dotted] (Ttop) -- (gadget1);
\draw[dotted] (Stop) -- (gadget2);
    
\draw(gadget1) -- (3.5,4);
\node[draw, yellow, circle,scale=.4, fill] (leaf1) at (3.5,4) {};
\node[draw, green, circle,scale=.3, fill] (leaf1) at (4.5,4) {};
\draw (gadget2) -- (leaf1);
\end{tikzpicture}

\end{subfigure}%
\begin{subfigure}[t]{0.5\textwidth}
\centering
\begin{tikzpicture}[x=.5cm,y=.5cm, scale=1.2, every node/.style={transform shape}]

\node[draw, circle,scale=.3, fill] (Ttop) at (1,3) {};
;

\node[draw, circle,scale=.3, fill] (Stop) at (7,3) {};
  
\draw (0.3,0) -- (2.3,0) -- (2.3,0.6) -- (2,0.4)-- (2,2) -- (0,2) -- (0,1.4)--(0.3,1.6)--(0.3,0);
\draw (1,2) -- (1,3);

\draw (6,0) -- (8,0) -- (7,2) -- (6,0);
\draw (7,2) -- (7,3);

\pic[scale=0.3, every node/.style={transform shape}] at (0.4,0) {decdecsquare};
\pic[scale=0.3, every node/.style={transform shape}] at (1.3,0) {decdectriangle};
\pic[scale=0.3, every node/.style={transform shape}] at (2.2,0) {decdecsquare};

\pic[scale=0.3, every node/.style={transform shape}] at (6.1,0) {decdectriangle};
\pic[scale=0.3, every node/.style={transform shape}] at (7,0) {decdectriangle};
\pic[scale=0.3, every node/.style={transform shape}] at (7.9,0) {decdecsquare};

\node[draw, circle,scale=.3, fill] (gadget1) at (3.5,3) {};
\node[draw, circle,scale=.3, fill] (gadget2) at (4.5,3) {};
\draw (gadget1) -- (gadget2);
    
\draw[dotted] (Ttop) -- (gadget1);
\draw[dotted] (Stop) -- (gadget2);
    
\draw(gadget1) -- (3.5,4);
\node[draw, yellow, circle,scale=.4, fill] (leaf1) at (3.5,4) {};
\node[draw, green, circle,scale=.3, fill] (leaf1) at (4.5,4) {};
\draw (gadget2) -- (leaf1);

\end{tikzpicture}

\end{subfigure}
\caption{A sketch of $T_{n+1}$ and $S_{n+1}$ after countably many steps.}
\label{sketchfig2}
\end{figure}

At this point, we would have kept our promise of adding the same thing behind every red and blue leaf of $T_n$ and $S_n$, and hence would have achieved $T_{n+1} - x_j \cong S_{n+1} - y_j$ for all $j \leq n$. However, by gluing the additional copies to blue and red leaves of $T_n$ and $S_n$, we now have ruined the isomorphism between $T_{n+1} - v$ and $S_{n+1} - w$. In order to repair this, we also have to copy the graphs appearing beyond $r$ and $b$ in Fig.~\ref{sketchfig1} respectively onto all red and blue leaves of $\hat{S}_n$ and $\hat{T}_n$. This repairs $T_{n+1} - v \cong S_{n+1} - w$, but again violates our initial promises. In this way, we keep adding, step by step, further copies of the graphs appearing beyond $r$ and $b$ in Fig.~\ref{sketchfig1} respectively onto all red and blue leaves of everything we have constructed so far.

At every step we preserved the colours of leaves in all newly added copies, so we get new red leaves and blue leaves, and we continue the process of copying onto those new leaves as well. After countably many steps we have dealt with all red or blue leaves. We take these new trees to be $S_{n+1}$ and $T_{n+1}$. They are non-isomorphic, since after removing all long bare paths, $T_{n+1}$ contains $T_n$ as a component, whereas $S_{n+1}$ does not. 

Figure \ref{sketchfig2} shows how $T_{n+1}$ and $S_{n+1}$ might appear.  We have now fulfilled our intention of sticking the same thing onto all red leaves and the same thing onto all blue leaves, but we have also ensured that $T_{n+1} - v \cong S_{n+1} - w$, as desired.

\section{Closure with respect to promises}\label{s:closure}

\label{sec:defpromiseclosure}
In this section, we formalise the ideas set forth in the proof sketch of how to extend a graph so that it looks the same beyond certain sets of leaves.

Given a directed edge $\vec{e}=\vec{xy}$ in some forest $G=(V,E)$, we denote by $G(\vec{e})$ the unique component of $G-e$ containing the vertex $y$. We think of $G(\vec{e})$ as a rooted tree with root $y$. As indicated in the previous section, in order to make $T$ and $S$ hypomorphic at the end, we will often have to guarantee $S(\vec{e}) \cong T(\vec{f})$ for certain pairs of edges $\vec{e}$ and $\vec{f}$.

\begin{defn}[Promise structure]
A \emph{promise structure} $\script{P}=\p{G,\vec{P},\script{L}}$ consists of: 
\begin{itemize}
\item a forest $G$,
\item $\vec{P}=\set{\vec{p}_i}:{i \in I}$ a set of directed edges $\vec{P} \subseteq \vec{E}(G)$, and
\item $\script{L}=\set{L_i}:{i \in I}$ a set of pairwise disjoint sets of leaves of $G$.
\end{itemize}
\end{defn}
Often, when the context is clear, we will not make a distinction between $\script{L}$ and the set $\bigcup_i L_i$, for notational convenience. 

We will call an edge $\vec{p}_i \in \vec{P}$ a \emph{promise edge}, and leaves $\ell \in L_i$ \emph{promise leaves}. A promise edge $\vec{p_i} \in \vec{P}$ is called a \emph{placeholder-promise} if the component $G(\vec{p_i})$ consists of a single leaf $c_i \in L_i$, then called a \emph{placeholder-leaf}. We write 
\[
\script{L}_p =\set{ L_i}:{i\in I,\ \vec{p_i} \text{ a placeholder-promise}} \text{    and    } \script{L}_q = \script{L} \setminus \script{L}_p.
\]

Given a leaf $\ell$ in $G$, there is a unique edge $q_{\ell} \in E(G)$ incident with $\ell$, and this edge has a natural orientation $\vec{q_{\ell}}$ towards $\ell$. Informally, we think of the `promise' $\ell \in L_i$ as saying that if we extend $G$ to a graph $H \supset G$, we will do so in such a way that $H(\vec{q_{\ell}}) \cong H(\vec{p_i})$. Given a promise structure $\script{P}=\p{G,\vec{P},\script{L}}$, we would like to construct a graph $H \supset G$ which satisfies all the promises in $\script{P}$. This will be done by the following kind of extension.

\begin{defn}[Leaf extension]
Given an inclusion $H\supseteq G$ of forests and a set $L$ of leaves of $G$, $H$ is called a \emph{leaf extension}, or more specifically an \emph{$L$-extension, of $G$}, if:
\begin{itemize}
\item every component of $H$ contains precisely one component of $G$, and
\item for every vertex $h \in H \setminus G$ and every vertex $g \in G$ in the same component as $h$, the unique $g-h$ path in $H$ meets $L$.
\end{itemize}
\end{defn}

 In the remainder of this section we describe a construction of a forest \emph{$\cl(G)$} which has the following properties.

\begin{prop}\label{p:closure}
Let $G$ be a forest and let $\p{G,\vec{P},\script{L}}$ be a promise structure. Then there is a forest $\cl(G)$ such that:
\begin{enumerate}[series=properties,label={\upshape(cl.\arabic{*})}]
\item\label{i:extension} $\cl(G)$ is an $\script{L}_q$-extension of $G$, and
\item\label{i:keepspromises}  for every $\vec{p}_i \in \vec{P}$ and all $\ell \in L_i$, $$\cl(G)(\vec{p}_i) \cong \cl(G)(\vec{q}_{\ell})$$ are isomorphic as rooted trees.
\end{enumerate}
\end{prop}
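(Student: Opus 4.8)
The plan is to build $\cl(G)$ as the union of an increasing chain of forests $G = G^0 \subseteq G^1 \subseteq G^2 \subseteq \cdots$, where at each stage we repair the promises that are currently violated by gluing on fresh copies of the relevant rooted trees, exactly as in the informal ``keep copying behind every promise leaf'' description from Section~\ref{s:outline}. Concretely, for a promise index $i \in I$ with promise edge $\vec{p}_i$ and promise leaves $L_i$, and for each $\ell \in L_i$, we want $H(\vec{q}_\ell) \cong H(\vec{p}_i)$ in the final forest. At stage $m+1$ I would, for every such pair $(i,\ell)$, delete the leaf $\ell$ (if $\vec{p}_i$ is a genuine promise, i.e.\ $L_i \in \script{L}_q$) or keep it (if $\vec{p}_i$ is a placeholder-promise, i.e.\ $L_i \in \script{L}_p$ — here $\ell = c_i$ is itself the placeholder leaf and nothing below it should change) and attach at the appropriate vertex a disjoint isomorphic copy of the rooted tree $G^m(\vec{p}_i)$. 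Crucially, inside each newly attached copy we re-introduce copies of all the promise edges and promise leaves that lived inside $G^m(\vec{p}_i)$; this is what forces the process to iterate, since the new copies create new violated promises. One must take a little care with the placeholder-promises: their components are single leaves $c_i$, so ``copying $G^m(\vec{p}_i)$'' never actually changes anything below $c_i$, which is why $\cl(G)$ is only an $\script{L}_q$-extension of $G$ and not an $\script{L}$-extension — the placeholder leaves remain leaves.

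The key steps, in order, would be: (1) set up the recursion precisely, keeping track at each stage $G^m$ of an induced promise structure $(G^m, \vec{P}^m, \script{L}^m)$ extending the original one, so that $\vec{P}^m$ and $\script{L}^m$ include all the copied-in promise edges and leaves; (2) define the stage-$(m+1)$ operation as the simultaneous gluing described above, performed disjointly over all currently violated promise pairs, and check it yields a forest with $G^m \subseteq G^{m+1}$ and that the new promise structure genuinely extends the old one; (3) set $\cl(G) := \bigcup_{m} G^m$ with promise structure the union of the $\script{P}^m$; (4) verify \ref{i:extension}: every component of $\cl(G)$ contains exactly one component of $G$ (nothing we glue on merges components, and we only ever attach pendant subtrees), and every vertex of $\cl(G) \setminus G$ lies below some leaf in $\script{L}_q$ — this is an induction on the stage at which the vertex was created, using that each newly attached copy hangs off a (former) promise leaf in $\script{L}_q$; (5) verify \ref{i:keepspromises}: fix $i$ and $\ell \in L_i$ and show $\cl(G)(\vec{p}_i) \cong \cl(G)(\vec{q}_\ell)$ as rooted trees. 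For the latter, the point is that if the promise pair $(i,\ell)$ first appears in $G^m$, then at stage $m+1$ we attach below $\ell$ an exact copy of $G^m(\vec{p}_i)$ together with its internal promises, and thereafter the two rooted trees $G^{m'}(\vec{p}_i)$ and $G^{m'}(\vec{q}_\ell)$ grow in lock-step (a formal induction: any isomorphism $G^{m}(\vec{p}_i) \to (\text{copy below }\ell)$ respecting the promise structure extends to an isomorphism one stage later), so in the union the two rooted trees are isomorphic.

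The main obstacle I expect is bookkeeping, not any deep idea: one must make the ``simultaneously over all violated promise pairs'' step rigorous while the index set $I^m$ of promises is itself changing at each stage, and one must make sure the isomorphisms asserting $G^{m'}(\vec{p}_i) \cong G^{m'}(\vec{q}_\ell)$ are chosen coherently across stages so they have a well-defined union — i.e.\ one really wants a directed system of partial isomorphisms whose colimit gives the desired rooted-tree isomorphism. A secondary subtlety is handling the case where a promise edge $\vec{p}_i$ itself lies inside the subtree $G^m(\vec{p}_j)$ being copied for some other $j$ (nested promises), so that ``copying the internal promises'' is well-defined; choosing the disjoint copies functorially (e.g.\ tagging vertices of the copy by the pair $(i,\ell)$ and the stage) resolves this. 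Once the recursion is set up carefully, properties \ref{i:extension} and \ref{i:keepspromises} both fall out of straightforward inductions on the construction stage.
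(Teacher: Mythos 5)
Your overall plan — build $\cl(G)$ as a union of an increasing chain obtained by repeatedly gluing rooted copies behind promise leaves, then exhibit a directed system of isomorphisms — is the same as the paper's. But your recursion differs in a way that breaks the key step: you glue a copy of $G^m(\vec{p}_i)$ (the \emph{current}-stage subtree) at each promise leaf, whereas the paper always glues a copy of $G(\vec{p}_i)$ (the subtree of the \emph{original} forest). With the paper's recursion, the obvious off-by-one map $\varphi_{\ell,n} \colon H^{(n)}(\vec{p}_i) \to H^{(n+1)}(\vec{q}_\ell)$ is a promise-respecting isomorphism and each extends the previous, so the union is the desired isomorphism. With your recursion this fails: if $\ell \in L_i$ appears at stage $m$, then $G^{m+1}(\vec{q}_\ell)$ is a copy of $G^m(\vec{p}_i)$, but $G^{m+2}(\vec{q}_\ell)$ is that copy with copies of $G^{m+1}(\vec{p}_j)$ glued at its promise leaves, while $G^{m+1}(\vec{p}_i)$ is $G^m(\vec{p}_i)$ with copies of $G^{m}(\vec{p}_j)$ glued. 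Since $G^{m+1}(\vec{p}_j) \not\cong G^m(\vec{p}_j)$ in general, your claimed ``grow in lock-step'' property — that the isomorphism ``extends to an isomorphism one stage later'' — is false: the two sides grow at different (and in fact rapidly diverging) rates, so your stage-indexed directed system does not assemble.

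This is not merely cosmetic: as stated, step (5) of your sketch, which is where \ref{i:keepspromises} is actually proved, rests on a false induction hypothesis. The gap is fixable, but not along the route you sketched. Either (a) adopt the paper's recursion, gluing the \emph{fixed} rooted tree $G(\vec{p}_i)$ at every stage, so that the one-step-shifted isomorphisms $H^{(n)}(\vec{p}_i) \to H^{(n+1)}(\vec{q}_\ell)$ really do extend one another (this is precisely what the paper's Lemma~\ref{l:propertiesofH} proves); or (b) keep your recursion but abandon the stage index as the parameter of the directed system, instead stratifying $\cl(G)$ by the number of promise leaves crossed on the path from the root, showing $G^m(\vec{p}_i)$ consists of exactly the vertices of that rank at most $2^m - 1$, and then assembling the isomorphism rank-by-rank rather than stage-by-stage. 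Option (a) is substantially cleaner and is what makes the paper's choice of recursion the right one. The rest of your sketch (the direct-limit setup, the treatment of placeholder leaves, the verification of \ref{i:extension}) is fine.
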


We first describe the construction of $\cl(G)$, and then verify the properties asserted in Proposition \ref{p:closure}. Let us define a sequence of promise structures $\p{H^{(i)},\vec{P},\script{L}^{(i)}}$ as follows. We set $\p{H^{(0)},\vec{P},\script{L}^{(0)}} = \p{G,\vec{P},\script{L}}$. We construct a sequence of graphs 
$$G=H^{(0)} \subseteq H^{(1)} \subseteq H^{(2)} \subseteq \cdots,$$ 
and each $H^{(n)}$ will get a promise structure whose set of promise edges is equal to $\vec{P}$ again, yet whose set of promise leaves depends on $n$ as follows: given $\p{H^{(n)},\vec{P},\script{L}^{(n)}}$, we construct $H^{(n+1)}$ by gluing, for each $i$, at every promise leaf $\ell \in L^{(n)}_i$ a rooted copy of $G(\vec{p}_i)$. As promise leaves for $H^{(n+1)}$ we take all promise leaves from the newly added copies of $G(\vec{p}_i)$. That is, if a leaf $\ell \in G(\vec{p_i})$ was such that $\ell \in L_j$, then every copy of that leaf will be in $L^{(n+1)}_j$.

Formally, suppose that $\p{G,\vec{P},\script{L}}$ is a promise structure. For each $\vec{p_i} \in \vec{P}$ let $C_i = G(\vec{p_i})$ and let $c_i$ be the root of this tree. If $U$ is a set and $H$ is a graph, then we denote by $U \times H$ the graph whose vertices are pairs $(u, v)$ with $u \in U$ and $v$ a vertex of $H$, and with an edge from $(u, v)$ to $(u, w)$ whenever $vw$ is an edge of $H$. Let $\p{H^{(0)},\vec{P},\script{L}^{(0)}} = \p{G,\vec{P},\script{L}}$ and given $\p{H^{(n)},\vec{P},\script{L}^{(n)}}$ let us define:
\begin{itemize}
\item $H^{(n+1)}$ to be the quotient of $H^{(n)} \sqcup \bigsqcup_{i \in I} (L^{(n)}_i \times C_i)$ w.r.t.\@ the relation
$$l \sim (l,c_i) \text{ for } l \in L^{(n)}_i \in \script{L}^{(n)}.$$
\item $\script{L}^{(n+1)}=\set{L^{(n+1)}_i}:{i \in I}$ with $L^{(n+1)}_i = \bigcup_{j \in I} L^{(n)}_j \times \p{C_j \cap L_i}$.
\end{itemize}

There is a sequence of natural inclusions $G = H^{(0)} \subseteq H^{(1)} \subseteq \cdots$ and we define \emph{$\cl(G)$} to be the direct limit of this sequence. 

\begin{defn}[Promise-respecting map]
\label{promiserespectingmap}
Let $G$ be a forest, $F^{(1)}$ and $F^{(2)}$ be leaf extensions of $G$, and $\script{P}^{(1)} = \p{F^{(1)},\vec{P},\script{L}^{(1)}}$ and $\script{P}^{(2)} = \p{F^{(2)},\vec{P},\script{L}^{(2)}}$ be promise structures with $\vec{P} \subseteq \vec{E}(G)$. Suppose $X^{(1)} \subseteq V(F^{(1)})$ and $X^{(2)} \subseteq V(F^{(2)})$. 

A bijection $\varphi \colon X^{(1)} \rightarrow X^{(2)}$ is \emph{$\vec{P}$-respecting} (with respect to $\script{P}^{(1)}$ and $\script{P}^{(2)}$) if the image of $L^{(1)}_i \cap X^{(1)}$ under $\varphi$ is $L^{(2)}_i \cap X^{(2)}$ for all $i$.
\end{defn}

Since both promise structures $\script{P}^{(1)}$ and $\script{P}^{(2)}$ refer to the same edge set $\vec{P}$, we can think of them as defining a $|\vec{P}|$-colouring on some sets of leaves. Then a mapping is $\vec{P}$-respecting if it preserves leaf colours.

\begin{lemma}\label{l:propertiesofH}
Let $\p{G,\vec{P},\script{L}}$ be a promise structure and let $G = H^{(0)} \subseteq H^{(1)} \subseteq \cdots$ be as defined above. Then the following statements hold:
\begin{itemize}
\item $H^{(n)}$ is an $\script{L}_q$-extension of $G$ for all $n$,
\item $\Delta(H^{(n+1)}) = \Delta(H^{(n)})$ for all $n$, and
\item For each $\ell \in L_i \in \script{L}$ there exists a sequence of $\vec{P}$-respecting rooted isomorphisms $\varphi_{\ell,n} \colon H^{(n)}(\vec{p_i}) \rightarrow H^{(n+1)}(\vec{q_{\ell}})$ such that $\varphi_{\ell,n+1}$ extends $\varphi_{\ell,n}$ for all $n \in \N$.
\end{itemize}
\end{lemma}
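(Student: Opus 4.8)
The plan is to prove all three bullet points simultaneously by induction on $n$, since the construction of $H^{(n+1)}$ from $H^{(n)}$ is explicit: we attach a copy of $C_i = G(\vec p_i)$ at each leaf in $L^{(n)}_i$. For the base case $n=0$ everything is either trivial or vacuous.

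First I would verify the \emph{$\script{L}_q$-extension} claim. The key observation is that every new vertex of $H^{(n+1)}$ lies in some attached copy $\{l\}\times C_i$ with $l\in L^{(n)}_i$, and the unique path from such a vertex back into $H^{(n)}$ passes through $l$. Since $l$ is a promise leaf of $H^{(n)}$ in $L^{(n)}_i$, one must distinguish the placeholder case from the non-placeholder case: if $\vec p_i$ is a placeholder-promise, then $C_i$ is a single vertex, so no new vertices are actually added at those leaves, and such leaves $l$ need not — and indeed should not — be met by the $g$–$h$ paths we are tracking; this is exactly why the extension is an $\script{L}_q$-extension and not an $\script{L}$-extension. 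For the non-placeholder leaves, the path meets $l\in L_i$ where $L_i\in\script{L}_q$, so the defining condition of a leaf extension holds. I would also check inductively that each $l\in L^{(n)}_i$ is genuinely a leaf of $H^{(n)}$: a promise leaf of $H^{(n)}$ is by definition a copy of a leaf $\ell'\in C_j$ with $\ell'\in L_i$, and attaching copies of the $C_i$ only at \emph{other} leaves (those in $L^{(n)}_j$ for various $j$) keeps it a leaf, provided the $L^{(n)}_i$ remain pairwise disjoint — which follows from the pairwise disjointness of the $L_i$ and an easy bookkeeping argument on the indexing.

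Next, the degree claim $\Delta(H^{(n+1)})=\Delta(H^{(n)})$. The only vertices whose degree changes when we pass from $H^{(n)}$ to $H^{(n+1)}$ are the promise leaves $l\in L^{(n)}_i$ at which we glue, and the roots $c_i$ of the attached copies, which get identified with $l$. After the identification $l\sim(l,c_i)$, the degree of this vertex becomes $\deg_{H^{(n)}}(l) + \deg_{C_i}(c_i) = 1 + \deg_{C_i}(c_i) = \deg_{H^{(0)}}(\text{endpoint of }\vec p_i)=\deg_G(\text{head of }\vec p_i)\le\Delta(G)$, since $l$ was a leaf and $C_i=G(\vec p_i)$ is rooted at $c_i$ with $c_i$ having exactly the neighbours it had in $G$ on the $C_i$-side of $\vec p_i$. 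All other vertices in the attached copies retain their $C_i$-degree, which is $\le\Delta(G)$. Hence no degree exceeds $\Delta(H^{(n)})$, and since $H^{(n)}\subseteq H^{(n+1)}$ no degree decreases either.

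Finally, and this is the step I expect to be the main obstacle, the construction of the $\vec P$-respecting rooted isomorphisms $\varphi_{\ell,n}\colon H^{(n)}(\vec p_i)\to H^{(n+1)}(\vec q_\ell)$ that cohere. Fix $\ell\in L_i$. At stage $0$, $\varphi_{\ell,0}$ should be the canonical isomorphism $G(\vec p_i)=C_i \to \{\ell\}\times C_i = H^{(1)}(\vec q_\ell)$ sending $v\mapsto(\ell,v)$; this is a rooted isomorphism taking $c_i$ to $\ell$, and it is $\vec P$-respecting because a leaf $v\in C_i$ lies in $L_j$ iff $(\ell,v)\in L^{(1)}_j$ by the definition of $\script{L}^{(1)}$. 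For the inductive step I would argue that $H^{(n+1)}$ is obtained from $H^{(n)}$ by the \emph{same} gluing operation applied to $H^{(n)}(\vec q_\ell)$ as to $H^{(n)}(\vec p_i)$, so the already-constructed isomorphism $\varphi_{\ell,n}$ — being $\vec P$-respecting — carries the promise leaves of $H^{(n)}(\vec p_i)$ exactly onto those of $H^{(n)}(\vec q_\ell)$ colour-by-colour, and hence extends over the freshly attached copies by acting as the identity on the $C_j$ factors. Concretely, $\varphi_{\ell,n+1}$ sends a new vertex $(m, v)$ with $m\in L^{(n)}_j\cap V(H^{(n)}(\vec p_i))$ and $v\in C_j$ to $(\varphi_{\ell,n}(m), v)$. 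One must check this is well-defined across the quotient identifications, is a graph isomorphism, respects roots, respects the promise-leaf colouring at stage $n+1$ (which again reduces to the stage-$n$ statement for $\varphi_{\ell,n}$ together with the definition of $\script{L}^{(n+1)}$), and restricts to $\varphi_{\ell,n}$ on $H^{(n)}(\vec p_i)$. The bookkeeping here is the delicate part — in particular keeping straight which copies of which leaves land in which $L^{(n)}_j$ — but it is all routine once the inductive hypothesis is set up to carry the $\vec P$-respecting property along with the isomorphism.
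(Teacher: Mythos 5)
Your proof is correct and follows essentially the same approach as the paper: the third bullet point is handled by the same induction, using $\vec{P}$-respecting-ness of $\varphi_{\ell,n}$ to pair off the promise leaves at which fresh copies of the $C_j$ are glued and then extending over those copies. The paper simply declares the first two bullet points clear, whereas you spell them out (including the correct observation that the placeholder promises add no new vertices, which is exactly why one gets an $\script{L}_q$-extension rather than an $\script{L}$-extension), but that is added detail rather than a different route.
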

\begin{proof}
The first two statements are clear. We will prove the third by induction on $n$. To construct $H^{(1)}$ from $G$, we glued a rooted copy of $G(\vec{p_i})$ to each $\ell \in L_i$, keeping all copies of promise leaves. Hence, for any given $\ell \in L_i$, the natural isomorphism 
$\varphi_{\ell,0} \colon G(\vec{p}_i) \rightarrow H^{(1)}(\vec{q_{\ell}})$ is $\vec{P}$-respecting as desired. 

Now suppose that $\varphi_{\ell,n}$ exists for all $\ell \in \script{L}$. To form $H^{(n+1)}(\vec{p_i})$, we glued on a copy of $G(\vec{p_i})$ to each $\ell \in L^{(n)}_i \cap H^{(n)}(\vec{p_i})$, and to construct $H^{(n+2)}(\vec{q_{\ell}})$, we glued on a copy of $G(\vec{p_i})$ to each $\ell \in L^{(n+1)}_i \cap H^{(n+1)}(\vec{q_{\ell}})$, in both cases keeping all copies of promise leaves.

Therefore, since $\varphi_{\ell,n}$ was a $\vec{P}$-respecting rooted isomorphism from $H^{(n)}(\vec{p_i})$ to $H^{(n+1)}(\vec{q_{\ell}})$, we can combine the individual isomorphisms between the newly added copies of $G(\vec{p_i})$ with $\varphi_{\ell,n}$ to form $\varphi_{\ell,n+1}$.
\end{proof}

We can now complete the proof of Proposition \ref{p:closure}.

\begin{proof}[Proof of Proposition \ref{p:closure}]
First, we note that $G \subseteq \cl(G)$, and since each $H^{(n)}$ is an $\script{L}_q$-extension of $G$ for all $n$, so is $\cl(G)$. Also, since each $H^{(n)}$ is a forest it follows that $\cl(G)$ is a forest.

Let us show that $\cl(G)$ satisfies property \ref{i:keepspromises}. Since we have the sequence of inclusions $G = H^{(0)} \subseteq H^{(1)} \subseteq \ldots$, it follows that $\cl(G)(\vec{q_{\ell}})$ is the direct limit of the sequence $H^{(0)}(\vec{q_{\ell}}) \subseteq H^{(1)}(\vec{q_{\ell}}) \subseteq \cdots$ and also $\cl(G)(\vec{p_i})$ is the direct limit of the sequence $H^{(0)}(\vec{p_i}) \subseteq H^{(1)}(\vec{p_i}) \subseteq \cdots$. By Lemma \ref{l:propertiesofH} there is a sequence of rooted isomorphisms  $\varphi_{\ell,n} \colon H^{(n)}(\vec{p_i}) \rightarrow H^{(n+1)}(\vec{q_{\ell}})$ such that $\varphi_{\ell,n+1}$ extends $\varphi_{\ell,n}$, so $\varphi_\ell=\bigcup_n \varphi_{\ell,n}$ is the required isomorphism.
\end{proof}

We remark that it is possible to show that $\cl(G)$ is in fact determined, uniquely up to isomorphism, by the properties \ref{i:extension} and \ref{i:keepspromises}. Also we note that since each $H^{(n)}$ has the same maximum degree as $G$, it follows that $\Delta(\cl(G)) = \Delta(G)$.

There is a natural promise structure on $\cl(G)$ given by the placeholder promises in $\vec{P}$ and their corresponding promise leaves. In the construction sketch from Section~\ref{s:outline}, these leaves corresponded to the yellow and green leaves. We now show how to keep track of the placeholder promises when taking the closure of a promise structure. 

Note that if $\vec{p_i}$ is a placeholder promise, then for each $\p{H^{(n)}, \script{P}, \script{L}^{(n)}}$ we have $L_i^{(n)} \supseteq L_{i}^{(n-1)}$. Indeed, for each leaf in $L_{i}^{(n-1)}$ we glue a copy of the component $c_i$ together with the associated promises on the leaves in this component. However, $c_i$ is just a single vertex, with a promise corresponding to $\vec{p_i}$, and hence $L_i^{(n)} \supseteq L_{i}^{(n-1)}$. For every placeholder promise $\vec{p}_i \in \vec{P}$ we define $\cl(L_i)= \bigcup_n L_i^{(n)}$.

\begin{defn}[Closure of a promise structure]
\label{def_closurestructure} The \emph{closure} of the promise structure $\p{G, \script{P}, \script{L}}$ is the promise structure $\cl(\script{P}) = \p{\cl(G), \cl(\vec{P}),\cl(\script{L})}$, where:
\begin{itemize}
\item $\cl(\vec{P})=\set{\vec{p}_i}:{\text{$\vec{p_i} \in \vec{P}$ is a placeholder-promise}}$, and 
\item $\cl(\script{L}) = \{\cl(L_i) \colon \text{$\vec{p_i} \in \vec{P}$ is a placeholder-promise}\}$.
\end{itemize}
\end{defn}

We note that, since each isomorphism $\varphi_{\ell,n}$ from Lemma \ref{l:propertiesofH} was $\vec{P}$-respecting, it is possible to strengthen Proposition \ref{p:closure} in the following way.

\begin{prop}
Let $G$ be a forest and let $\p{G,\vec{P},\script{L}}$ be a promise structure. Then the forest $\cl(G)$ satisfies:
\begin{enumerate}[resume=properties,label={\upshape(cl.\arabic{*})}]
\item\label{i:keepslabelledpromises}  for every $\vec{p_i} \in \vec{P}$ and every $\ell \in L_i$, $$\cl(G)(\vec{p_i}) \cong \cl(G)(\vec{q_{\ell}})$$ are isomorphic as rooted trees, and this isomorphism is $\cl(\vec{P})$-respecting with respect to $\cl(\script{P})$.
\end{enumerate}
\end{prop}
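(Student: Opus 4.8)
The plan is to upgrade the proof of Proposition~\ref{p:closure} by carrying along the extra bookkeeping provided by the $\vec{P}$-respecting isomorphisms in Lemma~\ref{l:propertiesofH}. Concretely, I would revisit the chain of isomorphisms $\varphi_{\ell,n}\colon H^{(n)}(\vec{p_i}) \to H^{(n+1)}(\vec{q_\ell})$ and observe that each one, being $\vec{P}$-respecting, in particular respects the placeholder-promise leaves: for each placeholder promise $\vec{p_j}$, the image of $L_j^{(n)} \cap H^{(n)}(\vec{p_i})$ under $\varphi_{\ell,n}$ is $L_j^{(n+1)} \cap H^{(n+1)}(\vec{q_\ell})$. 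Taking the union $\varphi_\ell = \bigcup_n \varphi_{\ell,n}$, as in the proof of Proposition~\ref{p:closure}, then yields a rooted isomorphism $\cl(G)(\vec{p_i}) \to \cl(G)(\vec{q_\ell})$ which sends $\cl(L_j) \cap \cl(G)(\vec{p_i})$ to $\cl(L_j) \cap \cl(G)(\vec{q_\ell})$ for every placeholder promise $\vec{p_j}$; by the description of $\cl(\vec{P})$ and $\cl(\script{L})$ in Definition~\ref{def_closurestructure}, this is exactly the statement that $\varphi_\ell$ is $\cl(\vec{P})$-respecting with respect to $\cl(\script{P})$.

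In more detail, the key steps are: first, fix a placeholder promise $\vec{p_i}$, a leaf $\ell \in L_i$, and take the chain $\varphi_{\ell,n}$ from Lemma~\ref{l:propertiesofH}. Second, record that $\vec{P}$-respecting (Definition~\ref{promiserespectingmap}) applied with the subsets $X^{(1)} = V(H^{(n)}(\vec{p_i}))$ and $X^{(2)} = V(H^{(n+1)}(\vec{q_\ell}))$ says $\varphi_{\ell,n}\big(L_j^{(n)} \cap H^{(n)}(\vec{p_i})\big) = L_j^{(n+1)} \cap H^{(n+1)}(\vec{q_\ell})$ for all $j$, and in particular for all $j$ with $\vec{p_j}$ a placeholder promise. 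Third, use the compatibility $\varphi_{\ell,n+1} \supseteq \varphi_{\ell,n}$ together with the fact that $L_j^{(n)} \subseteq L_j^{(n+1)}$ for placeholder $\vec{p_j}$ (noted just before Definition~\ref{def_closurestructure}) to pass to the limit: $\varphi_\ell$ maps $\bigcup_n \big(L_j^{(n)} \cap H^{(n)}(\vec{p_i})\big) = \cl(L_j) \cap \cl(G)(\vec{p_i})$ onto $\bigcup_n \big(L_j^{(n+1)} \cap H^{(n+1)}(\vec{q_\ell})\big) = \cl(L_j) \cap \cl(G)(\vec{q_\ell})$. Fourth, conclude that $\varphi_\ell$ is the required $\cl(\vec{P})$-respecting rooted isomorphism.

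The main thing to be careful about — rather than a genuine obstacle — is the interaction between restricting to the subtrees $\cl(G)(\vec{p_i})$ and $\cl(G)(\vec{q_\ell})$ and taking the directed union of leaf sets, i.e.\ checking that $\cl(L_j) \cap \cl(G)(\vec{q_\ell})$ really is the increasing union $\bigcup_n \big(L_j^{(n+1)} \cap H^{(n+1)}(\vec{q_\ell})\big)$. This follows because $\cl(G)(\vec{q_\ell})$ is the direct limit of the $H^{(n+1)}(\vec{q_\ell})$ (already used in the proof of Proposition~\ref{p:closure}) and $\cl(L_j) = \bigcup_n L_j^{(n)}$ is an increasing union, so intersection commutes with the limit; one should simply remark that a promise leaf of $\cl(G)$ lies in $\cl(G)(\vec{q_\ell})$ if and only if it already does so at some finite stage. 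With that observation in place the argument is a routine passage to the limit, and no new construction is needed beyond what Lemma~\ref{l:propertiesofH} and Proposition~\ref{p:closure} already provide.
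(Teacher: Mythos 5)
Your proof follows the paper's own argument essentially step by step: both take the union $\varphi_\ell = \bigcup_n \varphi_{\ell,n}$, invoke the $\vec{P}$-respecting property of each $\varphi_{\ell,n}$ from Lemma~\ref{l:propertiesofH}, and pass to the limit via the increasing unions $\cl(L_j) = \bigcup_n L_j^{(n)}$ for placeholder promises $\vec{p_j}$, observing that intersection with $\cl(G)(\vec{p_i})$ and $\cl(G)(\vec{q_\ell})$ commutes with the directed union. One small wording slip in your detailed steps: you write ``fix a placeholder promise $\vec{p_i}$,'' but the statement (and your own overview) quantifies over \emph{all} $\vec{p_i} \in \vec{P}$; nothing in your argument actually uses placeholder-ness of $\vec{p_i}$ --- only of the index $j$ enumerating $\cl(\script{L})$ --- so this does not affect the substance.
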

\begin{proof}
Since each isomorphism $\varphi_{\ell,n} \colon H^{(n)}(\vec{p_i}) \rightarrow  H^{(n+1)}(\vec{q_{\ell}}) $ in Proposition~\ref{l:propertiesofH} is $\vec{P}$-respecting, we have 
$$\varphi_{\ell,n}\p{L_i^{(n)} \cap H^{(n)}(\vec{p_i})} = L_i^{(n+1)} \cap H^{(n+1)}(\vec{q_{\ell}}).$$
For each placeholder promise we have that $\cl(L_i) = \bigcup_n L_i^{(n)}$, and so it follows that
$$\cl(L_i) \,\cap \, \cl(G)(\vec{q_{\ell}}) = \bigcup_n \p{L_i^{(n)} \cap H^{(n)}(\vec{q_{\ell}})}$$
and
$$\cl(L_i) \, \cap \, \cl(G)(\vec{p_i}) = \bigcup_n \p{ L_i^{(n)} \cap H^{(n)}(\vec{p_i})}.$$
From this it follows that $\varphi_{\ell} = \bigcup_{n} \varphi_{l,n}$ is a $\cl(\vec{P})$-respecting isomorphism between $\cl(G)(\vec{p_i})$ and $\cl(G)(\vec{q_{\ell}})$ as rooted trees.
\end{proof}

It is precisely this property \ref{i:keepslabelledpromises} of the promise closure that will allow us, in Claim~\ref{petersclaim} below, to maintain partial hypomorphisms during our recursive construction.

\section{The construction}
\label{s:proof}

In this section we construct two hypomorphic locally finite trees neither of which embed into the other, establishing our main theorem announced in the introduction.

\subsection{Preliminary definitions}

\begin{defn}[Bare path]
A path $P=v_0,v_1,\ldots,v_n$ in a graph $G$ is called a \emph{bare path} if $\deg_G(v_i)=2$ for all internal vertices $v_i$ for $0< i < n$. The path $P$ is a \emph{maximal bare path} (or \emph{maximally bare}) if in addition $\deg_G(v_0) \neq 2 \neq\deg_G(v_n)$. An infinite path $P=v_0,v_1,v_2, \ldots$ is \emph{maximally bare} if $\deg_G(v_0)\neq 2$ and $\deg_G(v_i)=2$ for all $i \geq 1$. 
\end{defn}

\begin{lemma}\label{l:miibound}
Let $T$ be a tree and $e \in E(T)$. If every maximal bare path in $T$ has length at most $k \in \mathbb{N}$, then every maximal bare path in $T - e$ has length at most $2k$.
\end{lemma}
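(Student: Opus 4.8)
The plan is to write $e = uv$ and to analyse a fixed maximal bare path $P = v_0, v_1, \ldots, v_n$ of $T - e$ by comparing the degrees of its vertices in $T$ and in $T - e$. The crucial observation is that deleting $e$ lowers exactly the degrees of $u$ and $v$, each by one, and leaves every other degree unchanged; hence an internal vertex $v_i$ of $P$ has $\deg_{T-e}(v_i) = 2$, and therefore $\deg_T(v_i) = 2$ unless $v_i \in \{u,v\}$, in which case $\deg_T(v_i) = 3$. A second, equally elementary, observation: since $T$ is a tree, $T - e$ has exactly two components, $u$ and $v$ lie in different ones, and a path of $T - e$ lies in a single component, so $P$ contains at most one of $u$ and $v$; in particular $e \notin E(P)$, so $P$ is literally a path of $T$.

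First I would dispose of the case in which neither $u$ nor $v$ is an \emph{internal} vertex of $P$. Then every internal vertex of $P$ has degree $2$ in $T$, so $P$ is a bare path of $T$; extending it to a maximal bare path of $T$ only makes it longer, and by hypothesis the resulting length is at most $k$, so $n \leq k \leq 2k$.

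In the remaining case exactly one of $u, v$ — say $u$ — is an internal vertex of $P$, i.e.\ $u = v_j$ with $0 < j < n$. Here I would split $P$ at $v_j$ into $P_1 = v_0, \ldots, v_j$ and $P_2 = v_j, \ldots, v_n$. Every internal vertex of $P_1$ or of $P_2$ is an internal vertex of $P$ distinct from $v_j$, hence lies in neither $\{u\}$ nor $\{v\}$ (recall $v \notin P$), hence has degree $2$ in $T$; so $P_1$ and $P_2$ are bare paths of $T$. Extending each to a maximal bare path of $T$ and applying the hypothesis gives $|P_1| \leq k$ and $|P_2| \leq k$, so that $n = |P_1| + |P_2| \leq 2k$. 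Combining the two cases proves the lemma.

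I do not expect a serious obstacle here; the one point one must not overlook is that a single edge deletion can turn at most one branch vertex in the interior of $P$ into a degree-$2$ vertex, which is exactly why the bound is $2k$ and not something larger — and also why one cannot do better in general: taking three bare paths of length $k$ glued at a common degree-$3$ vertex $u$ and deleting the edge at $u$ on the third path produces a maximal bare path of length $2k$ in $T - e$.
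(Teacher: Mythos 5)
Your argument is essentially the paper's: locate the (unique) internal vertex of $P$ incident with $e$, using that $u$ and $v$ lie in different components of $T-e$, split $P$ there, and bound each half by $k$ since each is a bare path of $T$. The one small omission is that by writing $P = v_0, \ldots, v_n$ you tacitly assume $P$ is finite, whereas the paper's definition of maximal bare path also allows infinite rays, so you should first rule out an infinite maximal bare path in $T - e$; your own degree analysis does this immediately, since such a ray would contain, after discarding at most an initial segment up to $u$ or $v$, an infinite bare path of $T$, contradicting the hypothesis that every maximal bare path of $T$ has length at most $k \in \mathbb{N}$.
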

\begin{proof}
We first note that every maximal bare path in $T - e$ has finite length, since any infinite bare path in $T_n - e$ would contain a subpath which is an infinite bare path in $T$. If $P = \{x_0,x_1, \ldots, x_n\}$ is a maximal bare path in $T - e$ which is not a subpath of any maximal bare path in $T$, then there is at least one $1 \leq i \leq n-1$ such that $e$ is adjacent to $x_i$, and since $T$ was a tree, $x_i$ is unique. Therefore, both $\{x_0,x_1,\ldots, x_i\}$ and $\{x_i,x_{i+1},\ldots, x_n\}$ are maximal bare paths in $T$. By assumption both $i$ and $n-i$ are at most $k$, and so the length of $P$ is at most $2k$, as claimed.
\end{proof}

\begin{defn}[Bare extension]
Given a forest $G$, a subset $B$ of leaves of $G$, and a component $T$ of $G$, we say that a tree $\hat{T} \supset T$ is a {\em bare extension} of $T$ at $B$ to length $k$ if $\hat{T}$ can be obtained from $T$ by adjoining, at each vertex $l \in B \cap V(T)$, a new path of length $k$ starting at $l$ and a new leaf whose only neighbour is $l$.
\end{defn}

\begin{figure}[ht!]
\begin{subfigure}[t]{0.5\textwidth}
\centering
\begin{tikzpicture}[scale=0.6]

\def \n {6}
\def \radius {1.5}
\def \length {3.5}

\draw (0,0) circle(\radius);
\foreach \s in {1,...,\n}
{
  \node[draw, blue, circle,scale=.4, fill](N\s) at ({360/\n * (\s - 1)}:\radius) {$$};
   \draw[white] (N\s) -- ({360/\n * (\s - 1)}:\length);
}

\node[] at (0,0) {$T$};

\end{tikzpicture}
\caption*{A tree $T$ with designated leaf set $B$.}
\end{subfigure}%
\begin{subfigure}[t]{0.5\textwidth}
\centering
\begin{tikzpicture}[scale=0.6]

\def \n {6}
\def \radius {1.5}
\def \length {3.5}
\def \llength {2.2}

\draw (0,0) circle(\radius);
\foreach \s in {1,...,\n}
{
  \node[draw, blue, circle,scale=.4, fill](N\s) at ({360/\n * (\s - 1)}:\radius) {$$};
  \draw[dotted] (N\s) -- ({360/\n * (\s - 1)}:\length);
  \node[draw, circle,scale=.3, fill]() at ({360/\n * (\s - 1)}:\length) {$$};
  \node[draw, circle,scale=.3, fill](L\s) at ({(360/\n * (\s - 1)) + 15}:\llength) {$$};
  \draw (N\s) -- (L\s);
}
\node[] at (0,0) {$T$};
     
\end{tikzpicture}
\caption*{A bare extension of $T$ at $B$.}
\end{subfigure}
\caption{Building a bare extension of a tree $T$ at $B$ to length $k$. All dotted lines are maximal bare paths of length  $k$.}
\label{miiextension}
\end{figure}

Note that the new leaves attached to each $l \in B$ ensure that the paths of length $k$ are indeed maximal bare paths.

\begin{defn}[$k$-ball]
For $G$ a subgraph of $H$, the $k$-ball $\Ball_H(G, k)$ is the induced subgraph of $H$ on the set of vertices within distance $k$ of some vertex of $G$.
\end{defn}

\begin{defn}[Binary tree] 
For $k \geq 1$, the \emph{binary tree of height $k$} is the unique rooted tree on $2^k -1 = 1+2+\cdots+ 2^{k-1}$ vertices such that the root has degree $2$, there are $2^{k-1}$ leaves, and all other vertices have degree $3$. 
%
%
By a \emph{binary tree} we mean a binary tree of height $k$ for some $k \in \N$.
\end{defn}

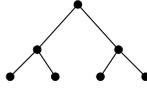
\begin{figure}[ht!]
\centering
\begin{tikzpicture}[scale=0.6]

\node[draw, circle,scale=.3, fill] (blob1) at (0,0) {};
\node[draw, circle,scale=.3, fill] (blob2) at (1,0) {};
\node[draw, circle,scale=.3, fill] (blob3) at (2,0) {};
\node[draw, circle,scale=.3, fill] (blob4) at (3,0) {};

\node[draw, circle,scale=.3, fill] (blob5) at (0.6,.6) {};
\node[draw, circle,scale=.3, fill] (blob6) at (2.4,.6) {};

\draw (blob1) -- (blob5);
\draw (blob2) -- (blob5);
\draw (blob3) -- (blob6);
\draw (blob4) -- (blob6);

\node[draw, circle,scale=.3, fill] (blob7) at (1.5,1.6) {};

\draw (blob5) -- (blob7);
\draw (blob6) -- (blob7);

\end{tikzpicture}
\caption{The binary tree of height $3$.}
\label{binary tree}
\end{figure}

\subsection{The back-and-forth construction} We prove the following theorem.

\begin{theorem}
\label{thm_vertexhypo}
There are two (vertex-)hypomorphic infinite trees $T$ and $S$ with maximum degree $3$ such that there is no embedding $T \hookrightarrow S$ or $S \hookrightarrow T$.
\end{theorem}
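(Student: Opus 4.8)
The plan is to carry out the recursive construction sketched in Section~\ref{s:outline}, using the promise closure of Section~\ref{s:closure} to propagate the requirement that $T$ and $S$ look the same beyond every coloured leaf. I would build nested coloured forests $T_0 \subseteq T_1 \subseteq \cdots$ and $S_0 \subseteq S_1 \subseteq \cdots$, each of maximum degree at most $3$, in which finitely many leaves carry colours (the promise leaves), and set $T := \bigcup_n T_n$, $S := \bigcup_n S_n$. Along the way I would maintain: \emph{(i)} $\Delta(T_n),\Delta(S_n)\le 3$; \emph{(ii)} a partial hypomorphism, i.e.\ vertices $x_1,\dots,x_n\in T_n$, $y_1,\dots,y_n\in S_n$ together with \emph{colour-preserving} rooted isomorphisms $T_n-x_j\cong S_n-y_j$ for $j\le n$, the choice of the $x_j,y_j$ governed by a back-and-forth enumeration so that in the limit every vertex of $T$ is some $x_j$ and every vertex of $S$ some $y_j$; \emph{(iii)} a strictly increasing sequence $k_1<k_2<\cdots$ such that $k_{n+1}$ exceeds the length of every maximal bare path in $T_n$ or $S_n$, and such that every maximal bare path of $T$ (resp.\ $S$) not contained in $T_n$ (resp.\ $S_n$) has length $\ge k_{n+1}$; and \emph{(iv)} a rigid ``shape'' labelling of the gadgets, discussed at the end, whose sole purpose is to obstruct embeddings.

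For the inductive step, suppose $T_n,S_n$ are built and it is $T$'s turn, with $v\in T_n$ the next vertex to be served. Since $\deg(v)\le 3$, the forest $T_n-v$ has at most three components; attaching rooted copies of these to one new vertex $w$ in a permuted order (and recording a fresh ``shape'' marker), I obtain a coloured tree $\hat T_n\not\cong T_n$ of maximum degree at most $3$ with $T_n-v\cong\hat T_n-w$ as coloured trees. I then form $S_n\sqcup\hat T_n$ and $T_n\sqcup\hat S_n$ (with $\hat S_n$ the analogous gadget obtained from $S_n$), adjoin to a distinguished red leaf and a distinguished blue leaf of each the connecting gadget of Figure~\ref{sketchfig1} --- bare extensions of length $k_{n+1}$ joined by an edge $e_n$, carrying two new placeholder leaves in two previously unused colours --- and pass to the promise closure of each, with respect to the promise structure whose non-placeholder promises record the rooted tree that should sit beyond every red leaf and the one beyond every blue leaf, and whose placeholder promises are the new coloured leaves. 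Proposition~\ref{p:closure} then produces $T_{n+1}\supseteq T_n$ and $S_{n+1}\supseteq S_n$ in which the same rooted tree sits beyond every red leaf and the same one beyond every blue leaf; since $\cl$ extends only at the $\script{L}_q$-leaves, the vertices $x_j,y_j,v,w$ and the bare paths just adjoined are left untouched, and $\Delta(T_{n+1})=\Delta(T_n)\le3$, $\Delta(S_{n+1})=\Delta(S_n)\le3$ by the remark following the closure construction. Setting $x_{n+1}:=v$, $y_{n+1}:=w$ and choosing $k_{n+2}$ larger than every maximal bare path now present completes the step.

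The partial hypomorphism is maintained by property~\ref{i:keepslabelledpromises}. Deleting the non-leaf vertex $v$ essentially commutes with forming the closure, so the colour-preserving isomorphism $(T_n\sqcup\hat S_n)-v\cong(S_n\sqcup\hat T_n)-w$ --- which is $\vec P$-respecting by construction --- extends, by gluing together the $\cl(\vec P)$-respecting isomorphisms of~\ref{i:keepslabelledpromises} over the pieces attached beyond the coloured leaves, to a colour-preserving isomorphism $T_{n+1}-v\cong S_{n+1}-w$; the same reasoning extends each earlier isomorphism $T_n-x_j\cong S_n-y_j$ to $T_{n+1}-x_j\cong S_{n+1}-y_j$. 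Passing to the limit, $T-x_j=\bigcup_{n\ge j}(T_n-x_j)\cong\bigcup_{n\ge j}(S_n-y_j)=S-y_j$ for every $j$, so $x_j\mapsto y_j$ is a hypomorphism from $T$ onto $S$, and $T,S$ are locally finite of maximum degree $3$.

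It remains to separate $T$ and $S$. For non-isomorphism one makes the Section~\ref{s:outline} argument precise with Lemma~\ref{l:miibound}: by invariant~\emph{(iii)} the maximal bare paths of $T$ of length $\ge k_{n+1}$ are exactly the connecting paths introduced from stage $n+1$ on, so deleting an internal vertex from each exhibits $T_n$ as one component of the result; carrying out the same deletion in $S$ yields only copies of $S_n$, $\hat T_n$ and descendants produced by the closure, none isomorphic to $T_n$ (this is what $T_n\not\cong S_n$ and $T_n\not\cong\hat T_n$, maintained at every stage, are for); hence an isomorphism $T\to S$ would restrict to an impossible isomorphism $T_n\to S_n$. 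Ruling out embeddings is the genuinely hard part, and is the reason for invariant~\emph{(iv)}: an embedding $f\colon T\hookrightarrow S$ need not carry bare paths to bare paths, so the bare-path dichotomy alone does not suffice. The remedy is to mark the gadgets glued beyond the two colour classes by two ``shapes'' --- the decorated squares and triangles of Figures~\ref{sketchfig1} and~\ref{sketchfig2}, with branching heights controlled by the binary trees --- obeying a substitution rule that is \emph{asymmetric} between the colour classes, so that the resulting labelled recursive pattern of $T$ admits no label-respecting embedding into that of $S$, and vice versa. Showing that any embedding $f$ is forced to respect this labelling, and therefore would induce an impossible embedding between the skeletons of some $T_n$ and some $S_m$, is the technical core of the argument and where the remainder of Section~\ref{s:proof} is spent.
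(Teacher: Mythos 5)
Your outline follows the paper's general plan — promise closures from Section~\ref{s:closure}, a back-and-forth enumeration for the partial hypomorphism, and long bare paths as ``timestamps'' that let one recover $T_n$ inside $T$ — and this much is faithful. But the non-embedding argument, which you correctly identify as the crux, is where your proposal departs from the paper and where there is a genuine gap. The paper does \emph{not} attach any labelling or ``shapes'' to the trees, and there is no ``asymmetric substitution rule'' or notion of ``label-respecting embedding'' to establish: the decorated squares and triangles in Figures~\ref{sketchfig1} and~\ref{sketchfig2} are only a schematic of the recursive structure, not markers glued onto the graphs. What the paper actually does is plant a binary tree $D_n$ (and $\hat D_n$) of height exactly $b_n+3$ at each step, while maintaining invariant~\ref{nobigbinarytrees} that no other binary subtree reaches that height. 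Since any graph embedding must send a binary tree of height $h$ into one of height $\ge h$, these concrete substructures act as rigid anchors without any labelling whatsoever (Claim~\ref{binarytrees} and Claim~\ref{floriansclaim}). The crucial thing you do not identify is that one must carry along the strong inductive invariants~\ref{noembeddings}--\ref{Sembeddings}: no embedding of $T_n$ into any bare extension of $S_n$ (and symmetrically), and any embedding of $T_n$ into a bare extension of $T_n$ fixes the root and has image $T_n$. It is the propagation of these through the closure, using the binary trees for rigidity, that makes the limiting argument work. Your ``bare paths need not map to bare paths'' worry is not actually the obstacle: bare paths of $S$ contained in $f(T_n)$ pull back to equally long bare paths of $T_n$, so $f(T_n)$ cannot escape $\Ball_S(S_n,k_n+1)$; the genuine difficulty is re-establishing~\ref{noembeddings} at step $n+1$, and your proposal contains no mechanism for that.

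A second, smaller issue: your construction of $\hat T_n$ by ``attaching the components of $T_n-v$ to a new vertex $w$ in a permuted order'' just reproduces $T_n$ with $v$ renamed (attachments to a single vertex are unordered), so it does not yield $\hat T_n\not\cong T_n$; and ``recording a fresh shape marker'' is not free, since it would push $\deg(w)$ above $3$ or perturb the deck. The paper instead cuts $T_n$ at the first edge $e(v)$ of the $v$--root path into $T_n(r)$ and $T_n(v)$, and reattaches the copies $\hat T_n(r)$ and $\hat T_n(\hat v)$ at two \emph{different} positions along the long path in $\tilde S_n$, carefully matched against the positions of $D_n$, $\hat D_n$, and the placeholder leaves so that the $\vec P$-respecting isomorphism $\tilde T_n - v \cong \tilde S_n - \hat v$ in~(\ref{defofhn+1}) exists by inspection. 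This precise bookkeeping, rather than a permutation-plus-marker, is what simultaneously secures the partial hypomorphism and the asymmetry needed for~\ref{noembeddings}.
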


To do this we shall recursively construct, for each $n \in \N$, 
\begin{itemize}
\item disjoint (possibly infinite) rooted trees $T_n$ and $S_n$, 
\item disjoint (possibly infinite) sets $R_n$ and $B_n$ of leaves of the forest $T_n \sqcup S_n$,
\item finite sets $X_n \subset V(T_n)$ and $Y_n \subset V(S_n)$, and bijections $\varphi_n \colon X_n \to Y_n$,
\item a family of isomorphisms $\script{H}_n = \set{h_{n,x} \colon T_n - x \to S_n - \varphi_n(x)}:{x \in X_n}$,
\item strictly increasing sequences of integers $k_n \geq 2$ and $b_n \geq 3$,
\end{itemize}
such that (letting all objects indexed by $-1$ be the empty set) for all $n \in \N$:
\begin{enumerate}[label={\upshape($\dagger$\arabic{*})}]
	\item\label{nestedtrees} $T_{n-1} \subset T_{n}$ and $S_{n-1} \subset S_{n}$ as induced subgraphs,
    	\item\label{123} the vertices of $T_n$ and $S_n$ all have degree at most 3, 
        \item\label{colouredroots} the root of $T_n$ is in $R_n$ and the root of $S_n$ is in $B_n$,
        \item \label{nobigbinarytrees} all binary trees appearing as subgraphs of $T_n \sqcup S_n$ are finite and have height at most $b_n$,         

	  	\item\label{kbigenough} all bare paths in $T_n \sqcup S_n$ are finite and have length at most $k_n$,
        
    	\item \label{ballsT}$\Ball_{T_{n}}(T_{n-1}, k_{n-1}+1)$ is a bare extension of $T_{n-1}$ at $R_{n-1} \cup B_{n-1}$ to length $k_{n-1}+1$ and does not meet $R_{n} \cup B_{n}$,

      	\item \label{ballsS}$\Ball_{S_{n}}(S_{n-1}, k_{n-1}+1)$ is a bare extension of $S_{n-1}$ at $R_{n-1} \cup B_{n-1}$ to length $k_{n-1}+1$ and does not meet $R_{n} \cup B_{n}$,
        
	\item \label{noembeddings} there is no embedding from $T_n$ into any bare extension of $S_n$ at $R_n \cup B_n$ to any length, nor from $S_n$ into any bare extension of $T_n$ at $R_n \cup B_n$ to any length,
	\item \label{Tembeddings}any embedding of $T_n$ into a bare extension of $T_n$ at $R_n \cup B_n$ to any length fixes the root of $T_n$ and has image $T_n$,
	\item \label{Sembeddings}any embedding of $S_n$ into a bare extension of $S_n$ at $R_n \cup B_n$ to any length fixes the root of $S_n$ and has image $S_n$,
		\item \label{enumerations}there are enumerations $V(T_n) = \set{t_j}:{j \in J_n}$ and $V(S_n) = \set{s_j}:{j \in J_n}$ such that
		\begin{itemize}
    			\item $J_{n-1} \subset J_{n} \subset \N$, 
                \item $\set{t_j}:{j \in J_{n}}$ extends the enumeration $\set{t_j}:{j \in J_{n-1}}$ of $V(T_{n-1})$, and similarly for $\set{s_j}:{j \in J_n}$,
        			\item $\cardinality{\N \setminus J_n} = \infty$,
    			\item $\Set{0,1,\ldots,n} \subset J_n $,
       		\end{itemize}
 	\item \label{don'tmesswithleaves} $\set{t_j, s_j}:{j \leq n} \cap \p{R_n \cup B_n} = \emptyset$,
	\item\label{XandY} the finite sets of vertices $X_n$ and $Y_n$ satisfy $\cardinality{X_n} = n = \cardinality{Y_n}$, and
		\begin{itemize}
			\item $X_{n-1} \subset X_{n}$ and $Y_{n-1} \subset Y_{n}$,  
			\item $\varphi_{n} \restriction X_{n-1} = \varphi_{n-1}$,
			\item $\set{t_j}:{j \leq n} \subset X_{2n+1}$ and $\set{s_j}:{j \leq n} \subset Y_{2(n+1)}$,
            \item $(X_n \cup Y_n) \cap (R_n \cup B_n) = \emptyset$,
		\end{itemize}
	\item\label{hypomorphism} the families of isomorphisms $\script{H}_n$ satisfy 
		\begin{itemize}
			\item  $h_{n, x} \restriction \p{T_{n-1} - x} = h_{n-1,x}$ for all $x \in X_{n-1}$,   
                         \item the image of $R_n \cap V(T_n)$ under $h_{n,x}$ is $R_n \cap V(S_n)$, and
                         \item the image of $B_n \cap V(T_n)$ under $h_{n,x}$ is $B_n \cap V(S_n)$ for all $x \in X_n$.
		\end{itemize}   
\end{enumerate}

\subsection{
The construction yields the desired non-reconstructible trees.
}
\label{subsec:result}

By property~\ref{nestedtrees}, we have $T_0 \subset T_1 \subset T_2 \subset \cdots $ and $S_0 \subset S_1 \subset S_2 \subset \cdots$. Let $T$ and $S$ be the union of the respective chains. It is clear that $T$ and $S$ are trees, and that as a consequence of \ref{123}, both trees have maximum degree $3$. 

We claim that the map $\varphi =\bigcup_{n} \varphi_n$ is a hypomorphism between $T$ and $S$. Indeed, it follows from \ref{enumerations} and \ref{XandY} that $\varphi$ is a well-defined bijection from $V(T)$ to $V(S)$. To see that $\varphi$ is a hypomorphism, consider any vertex $x$ of $T$. This vertex appears as some $t_j$ in our enumeration of $V(T)$, so by \ref{hypomorphism} the map
$$h_x := \bigcup_{n > 2j } h_{n,x} \colon T-x \to S-\varphi(x)$$ 
is an isomorphism between $T-x$ and $S-\varphi(x)$.

Now suppose for a contradiction that $f \colon T \hookrightarrow S$ is an embedding of $T$ into $S$. Then $f(t_0)$ is mapped into $S_n$ for some $n \in \N$. Properties \ref{kbigenough} and \ref{ballsT} imply that after deleting all maximal bare paths in $T$ of length  $>k_n$, the connected component of $t_0$ is a bare extension of $T_n$ to length 0. Further, by \ref{ballsS}, $\Ball_{S}(S_n, k_n+1)$ is a bare extension of $S_n$ at $R_n \cup B_n$ to length $k_n+1$.
But combining the fact that $f(T_n) \cap S_n \neq \emptyset$ and the fact that $T_n$ does not contain long maximal bare paths, it is easily seen that $f(T_n) \subset \Ball_{S}(S_n, k_n+1)$, contradicting \ref{noembeddings}.\footnote{To get the non-embedding property, we have used \ref{kbigenough}--\ref{noembeddings} at every step $n$. While at the first glance, properties \ref{nobigbinarytrees}, \ref{Tembeddings} and \ref{Sembeddings} do not seem to be needed at this point, they are crucial during the construction to establish \ref{noembeddings} at step $n+1$. See Claim~\ref{floriansclaim} below for details.}

The case $S \hookrightarrow T$ yields a contradiction in a symmetric fashion, completing the proof.

\subsection{
The base case: there are finite rooted trees $T_0$ and $S_0$ satisfying requirements \ref{nestedtrees}--\ref{hypomorphism}.
}
\label{subsec:basecase}

Choose a pair of non-isomorphic, equally sized trees $T_0$ and $S_0$ of maximum degree 3, and pick a leaf each as roots $\rooot{T_0}$ and $\rooot{S_0}$ for $T_0$ and $S_0$, subject to conditions \ref{noembeddings}--\ref{Sembeddings} with $R_0 = \singleton{\rooot{T_0}}$ and $B_0 = \singleton{\rooot{S_0}}$. A possible choice is given in Fig.~\ref{basecasefig}. Here, \ref{noembeddings} is satisfied, because any embedding of $T_0$ into a bare extension of $S_0$ has to map the binary tree of height $3$ in $T_0$ to the binary tree in $S_0$, making it impossible to embed the middle leaf. Properties \ref{Tembeddings} and \ref{Sembeddings} are similar.

\begin{figure}[ht!]
\begin{subfigure}[t]{0.5\textwidth}
\centering
\begin{tikzpicture}[scale=0.6]

\node[draw, circle,scale=.3, fill] (blob1) at (0,0) {};
\node[draw, circle,scale=.3, fill] (blob2) at (1,0) {};
\node[draw, circle,scale=.3, fill] (blob3) at (2,0) {};
\node[draw, circle,scale=.3, fill] (blob4) at (3,0) {};

\node[draw, circle,scale=.3, fill] (blob5) at (0.6,.6) {};
\node[draw, circle,scale=.3, fill] (blob6) at (2.4,.6) {};

\draw (blob1) -- (blob5);
\draw (blob2) -- (blob5);
\draw (blob3) -- (blob6);
\draw (blob4) -- (blob6);

\node[draw, circle,scale=.3, fill] (blob7) at (1.5,1.6) {};

\draw (blob5) -- (blob7);
\draw (blob6) -- (blob7);

\node[draw, circle,scale=.3, fill] (blob8) at (1.5,2.6) {};
\node[draw, circle,scale=.3, fill] (blob9) at (2.5,2.6) {};

\draw (blob7) -- (blob8);
\draw (blob8) -- (blob9);

\node[draw, circle,scale=.3, fill] (blob10) at (1.5,3.6) {};

\node[draw, red, circle,scale=.4, fill] (blob11) at (1.5,4.6) {};
\node[text=red] at (3,4.6) {$\rooot{T_0}$};

\draw (blob8) -- (blob10);
\draw (blob10) -- (blob11);

\end{tikzpicture}
\end{subfigure}%
\begin{subfigure}[t]{0.5\textwidth}
\centering
\begin{tikzpicture}[scale=0.6]

\node[draw, circle,scale=.3, fill] (blob1) at (0,0) {};
\node[draw, circle,scale=.3, fill] (blob2) at (1,0) {};
\node[draw, circle,scale=.3, fill] (blob3) at (2,0) {};
\node[draw, circle,scale=.3, fill] (blob4) at (3,0) {};

\node[draw, circle,scale=.3, fill] (blob5) at (0.6,.6) {};
\node[draw, circle,scale=.3, fill] (blob6) at (2.4,.6) {};

\draw (blob1) -- (blob5);
\draw (blob2) -- (blob5);
\draw (blob3) -- (blob6);
\draw (blob4) -- (blob6);

\node[draw, circle,scale=.3, fill] (blob7) at (1.5,1.6) {};

\draw (blob5) -- (blob7);
\draw (blob6) -- (blob7);

\node[draw, circle,scale=.3, fill] (blob8) at (1.5,2.6) {};

\draw (blob7) -- (blob8);

\node[draw, circle,scale=.3, fill] (blob10) at (1.5,3.6) {};
\node[draw, circle,scale=.3, fill] (blob9) at (2.5,3.6) {};

\draw (blob10) -- (blob9);

\node[draw, blue, circle,scale=.4, fill] (blob11) at (1.5,4.6) {};
\node[text=blue] at (3,4.6) {$\rooot{S_0}$};

\draw (blob8) -- (blob10);
\draw (blob10) -- (blob11);

\end{tikzpicture}
\end{subfigure}
\caption{A possible choice for finite rooted trees $T_0$ and $S_0$.}
\label{basecasefig}
\end{figure}
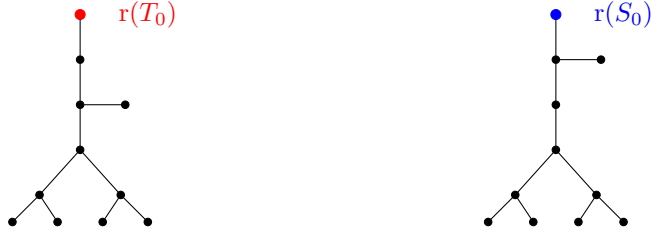

Let $J_0 = \Set{0,1,\ldots, \cardinality{T_0}-1} $ and choose enumerations $V(T_0) = \set{t_j}:{j \in J_0}$ and $V(S_0) = \set{s_j}:{j \in J_0}$ with $t_0 \neq \rooot{T_0}$ and $s_0 \neq \rooot{S_{0}}$. This takes care of \ref{enumerations} and \ref{don'tmesswithleaves}. Finally, \ref{XandY} and \ref{hypomorphism} are satisfied for $X_0 = Y_0 = \script{H}_0 = \varphi_0= \emptyset$. Set $k_0 = 2$ and $b_0 = 3$.

\subsection{The inductive step: set-up}
\label{subsec:setup}

Now, assume that we have constructed trees $T_k$ and $S_k$ for all $k \leq n$ such that \ref{nestedtrees}--\ref{hypomorphism} are satisfied up to $n$. If $n=2m$ is even, then we have $\set{t_j}:{j \leq m-1} \subset X_{n}$, so in order to satisfy \ref{XandY} we have to construct $T_{n+1}$ and $S_{n+1}$ such that the vertex $t_m$ is taken care of in our partial hypomorphism. Similarly, if $n=2m+1$ is odd, then we have $\set{s_j}:{j \leq m-1} \subset Y_{n}$ and we have to construct $T_{n+1}$ and $S_{n+1}$ such that the vertex $s_m$ is taken care of in our partial hypomorphism. Both cases are symmetric, so let us assume in the following that $n=2m$ is even.

Now let $v$ be the vertex with the least index in the set $\set{t_j}:{j \in J_n} \setminus X_n$, i.e.\
\begin{align}
 v = t_i \; \text{ for } \; i = \min \set{\ell}:{t_\ell\in V(T_n) \setminus X_n}. \numberthis\label{defofx}
\end{align}

Then by assumption~\ref{XandY}, $v$ will be $t_m$, unless $t_m$ was already in $X_n$ anyway. In any case, since $\cardinality{X_n}= \cardinality{Y_n} = n$, it follows from \ref{enumerations} that $i \leq n$, so by \ref{don'tmesswithleaves}, $v$ does not lie in our leaf sets $R_n \cup B_n$, i.e.\
\begin{align}
 v \notin R_n \cup B_n. \numberthis\label{xdoesntmess}
\end{align}

In the next sections, we will demonstrate how to to obtain trees $T_{n+1} \supset T_{n}$ and $S_{n+1}\supset S_{n}$ with $X_{n+1}=X_n \cup \singleton{v}$ and $Y_{n+1} = Y_n \cup \singleton{\varphi_{n+1}(v)}$ satisfying \ref{nestedtrees}---\ref{Sembeddings} and \ref{XandY}--\ref{hypomorphism}.

After we have completed this step, since $\cardinality{\N \setminus J_{n}} = \infty$, it is clear that we can extend our enumerations of $T_{n}$ and $S_{n}$ to enumerations of $T_{n+1}$ and $S_{n+1}$ as required, making sure to first list some new elements that do not lie in $R_{n+1} \cup B_{n+1}$.  This takes care of \ref{enumerations} and \ref{don'tmesswithleaves} and completes the recursion step $n \mapsto n+1$.

\subsection{The inductive step: construction}
\label{subsec:constrinducstep}

Given the two trees $T_n$ and $S_n$, we extend each of them through their roots as indicated in Figure~\ref{constructionfigure2} to trees $\tilde{T}_n$ and $\tilde{S}_n$ respectively. The trees $T_{n+1}$ and $S_{n+1}$ will be obtained as components of the promise closure of the forest $G_n = \tilde{T}_n \sqcup \tilde{S}_n$ with respect to the coloured promise edges. 

Since $v$ is not the root of $T_n$, there is a first edge $e$ on the unique path in $T_n$ from $v$ to the root. 
\begin{align}
\text{This edge we also call $e(v)$.}
\numberthis\label{defofedge}
\end{align}
Then $T_n -e$ has two connected components: one that contains the root of $T_n$ which we name $T_n(r)$, and one that contains $v$ which we name $T_n(v)$. 

Since every maximal bare path in $T_n$ has length at most $k_n$ by \ref{kbigenough}, it follows from Lemma~\ref{l:miibound} that all maximal bare paths in $T_n-e$, and so all bare paths in $T_n(r)$ and $T_n(v)$, have bounded length. Let $k=\tilde{k}_n$ be twice the maximum of the length of bare paths in $T_n$, $S_n$, $T_n(r)$ and $T_n(v)$, which exists by \ref{kbigenough}. 

\begin{figure}[ht!]
\begin{subfigure}[t]{0.5\textwidth}
\centering
\begin{tikzpicture}[scale=0.6]

\node[draw, circle,scale=.4, fill] (Ttop) at (1,3) {};
\node at (1.4,3.5) {$\rooot{T_{n}}$};
\node[draw, circle,scale=.3, fill] (extra1) at (0,3) {};
\draw (Ttop) -- (extra1);

\node[draw, circle,scale=.3, fill] (Stop) at (7,3) {};
\node[draw, circle,scale=.3, fill] (extra2) at (8,3) {};
\draw (Stop) -- (extra2);
  
\draw (0,0) -- (2,0) -- (1,2) -- (0,0);
\draw[->,red,ultra thick] (1,2) -- (1,3);

\node at (1.5,-.6) {$T_n$};

\draw (3.33,1) -- (4.33,1) -- (3.83,2) -- (3.33,1);
\node[draw, circle,scale=.3, fill] (blob1) at (3.83,2) {};
\node[draw, circle,scale=.3, fill] (blob2) at (3.83,3) {};
\draw (blob1) -- (blob2);
\node[] at (3.83,.5) {$D_n$};
\node[draw, circle,scale=.3, fill] (blob2) at (4.17,3) {};

\node[draw, circle,scale=.3, fill] (vtx) at (1.05,.4) {};
\node at (1.5,.4) {$v$};

\draw (6,0) -- (8,0) -- (7,2) -- (6,0);
\draw (7,2) -- (7,3);

\node at (8.0,-0.6) {$\hat{S}_n$};

\node[draw, blue, circle,scale=.3, fill] () at (.5,.0) {};
\node[draw, blue, circle,scale=.3, fill] () at (1.2,.0){};
\node[draw, blue, circle,scale=.3, fill] () at (1.7,.0)  {};
\node[draw, red, circle,scale=.3, fill] () at (.7,.0) {};
\node[draw, red, circle,scale=.3, fill] () at (1.4,.0) {};

\node[draw, red, circle,scale=.3, fill] () at (6.5,.0) {};
\node[draw, red, circle,scale=.3, fill] () at (7.2,.0) {};
\node[draw, red, circle,scale=.3, fill] () at (7.7,.0)  {};
\node[draw, blue, circle,scale=.3, fill] () at  (6.7,.0) {};
\node[draw, blue, circle,scale=.3, fill] () at (7.4,.0) {};
  
\node[draw, circle,scale=.3, fill] (gadget1) at (3.5,3) {};
\node[draw, circle,scale=.3, fill] (gadget2) at (4.5,3) {};
\draw (gadget1) -- (gadget2);
    
\draw[dotted] (Ttop) -- (gadget1);
\draw[dotted] (Stop) -- (gadget2);
    
\draw[->,yellow,ultra thick] (gadget1) -- (3.5,4);
\node[draw, yellow, circle,scale=.4, fill] (leaf1) at (3.5,4) {};
\node at (2.7,4.5) {$\rooot{T_{n+1}}$};
\node[draw, green, circle,scale=.3, fill] (leaf1) at (4.5,4) {};
\draw (gadget2) -- (leaf1);
\node at (4.5,4.5) {$g$};
     
\end{tikzpicture}
\caption{tree $\tilde{T}_{n}$}
\end{subfigure}%
\begin{subfigure}[t]{0.5\textwidth}
\centering
\begin{tikzpicture}[scale=0.6]

\node[draw, circle,scale=.3, fill] (Ttop) at (1,3) {};
\node[draw, circle,scale=.3, fill] (extra1) at (0,3) {};
\draw (Ttop) -- (extra1);
\node[draw, circle,scale=.4, fill] (Stop) at (7,3) {};
\node at (6.8,3.5) {$\rooot{S_{n}}$};
\node[draw, circle,scale=.3, fill] (extra2) at (8,3) {};
\draw (Stop) -- (extra2);
  
\draw (0,0) -- (1,0) -- (1.6,1) -- (1,2) -- (0,0);
\draw (1,2) -- (1,3);
\node at (1,-.6) {$\hat{T}_n(r)$};

\node at (7.5,.6) {};

\draw (2,1) -- (3,1) -- (2.5,2) -- (2,1);
\node[draw, circle,scale=.3, fill] (blob3) at (2.5,2) {};
\node[draw, circle,scale=.3, fill] (blob4) at (2.5,3) {};
\draw (blob3) -- (blob4);
\node at (3,2) {$\hat{v}$};
\node[] at (2.6,.5) {$\hat{T}_n(\hat{v})$};
\node[draw, blue, circle,scale=.3, fill] () at (2.2,1){};
\node[draw, red, circle,scale=.3, fill] () at (2.5,1)  {};
\node[draw, blue, circle,scale=.3, fill] () at (2.8,1) {};

\draw (3.33,1) -- (4.33,1) -- (3.83,2) -- (3.33,1);
\node[draw, circle,scale=.3, fill] (blob1) at (3.83,2) {};
\node[draw, circle,scale=.3, fill] (blob2) at (3.83,3) {};
\draw (blob1) -- (blob2);
\node[] at (3.83,.5) {$\hat{D}_n$};
\node[draw, circle,scale=.3, fill] (blob2) at (4.17,3) {};

\draw (6,0) -- (8,0) -- (7,2) -- (6,0);
\draw[->,blue,ultra thick] (7,2) -- (7,3);

\node at (8.0,-.6) {$S_n$};

\node[draw, blue, circle,scale=.3, fill] () at (.5,.0) {};
\node[draw, red, circle,scale=.3, fill] () at (.7,.0) {};

\node[draw, red, circle,scale=.3, fill] () at (6.5,.0) {};
\node[draw, red, circle,scale=.3, fill] () at (7.2,.0) {};
\node[draw, red, circle,scale=.3, fill] () at (7.7,.0)  {};
\node[draw, blue, circle,scale=.3, fill] () at  (6.7,.0) {};
\node[draw, blue, circle,scale=.3, fill] () at (7.4,.0) {};
  
\node[draw, circle,scale=.3, fill] (gadget1) at (3.5,3) {};
\node[draw, circle,scale=.3, fill] (gadget2) at (4.5,3) {};
\draw (gadget1) -- (gadget2);
    
\draw[dotted](Ttop) -- (gadget1);
\draw[dotted] (Stop) -- (gadget2);
    
\draw[->,green,ultra thick] (gadget2) -- (4.5,4);
        
\node[draw, green, circle,scale=.3, fill] (leaf1) at (4.5,4) {};
\node at (5.4,4.5) {$\rooot{S_{n+1}}$};

\node[draw, yellow, circle,scale=.3, fill] (leaf1) at (3.5,4) {};
\node at (3.5,4.5) {$y$};

\draw (gadget1) -- (leaf1);
     
\end{tikzpicture}
\caption*{The tree $\tilde{S}_{n}$.}
\end{subfigure}
\caption{All dotted lines are maximal bare paths of length at least $k=\tilde{k}_n$. The trees $D_n$ are binary trees of height $b_n+3$, hence $D_n\not\hookrightarrow T_n$ and $D_n\not\hookrightarrow S_n$ by (\ref{nobigbinarytrees}).}
\label{constructionfigure2}
\end{figure}
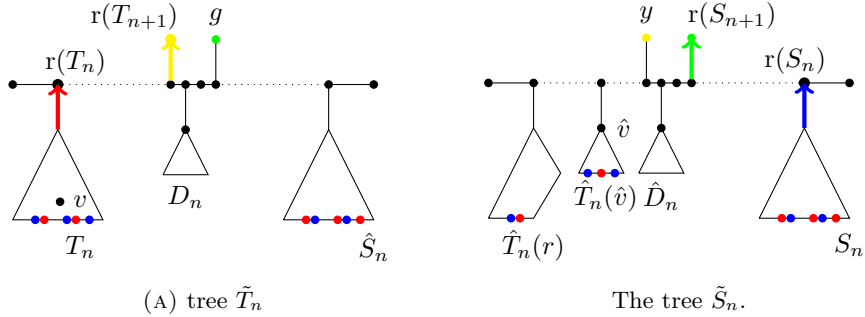

To obtain $\tilde{T}_n$, we extend $T_n$ through its root $\rooot{T_n} \in R_n$ by a path 
$$\rooot{T_n} = u_0, u_1,\dotsc,u_{p-1},u_{p}=\rooot{\hat{S}_n}$$ 
of length $p=4(\tilde{k}_n+1) + 3$, where at its last vertex $u_p$ we glue a rooted copy $\hat{S}_n$ of $S_n$ (via an isomorphism $\hat{w} \leftrightarrow w$), identifying $u_p$ with the root of $\hat{S}_n$.

Next, we add two additional leaves at $u_0$ and $u_p$, so that $\deg(\rooot{T_n})=3=\deg\left(\rooot{\hat{S}_n}\right)$. Further, we add a leaf $\rooot{T_{n+1}}$ at $u_{2k+2}$, which will be our new root for the next tree $T_{n+1}$; and another leaf $g$ at $u_{2k+5}$. Finally, we take a copy $D_n$ of a rooted binary tree of height $b_n+3$ and connect its root via an edge to $u_{2k+3}$. This completes the construction of $\tilde{T}_n$.

The construction of $\tilde{S}_n$ is similar, but with a twist. For its construction, we extend $S_n$ through its root $\rooot{S_n} \in B_n$ by a path
$$\rooot{S_n} = v_p, v_{p-1}, \dotsc,v_1,v_0=\rooot{\hat{T}_n(r)}$$ 
of length $p$, where at its last vertex $v_0$ we glue a copy $\hat{T}_n(r)$ of $T_n(r)$, identifying $v_0$ with the root of $\hat{T}_n(r)$. Then, we take a copy $\hat{T}_n(\hat{v})$ of $T_n(v)$ and connect $\hat{v}$ via an edge to $v_{k+1}$.
\begin{align}
\text{This edge we call $e(\hat{v})$.}
\numberthis\label{defofhatedge}
\end{align}

Finally, as before, we add two leaves at $v_0$ and $v_p$ so that $\deg\left(\rooot{\hat{T}_n(r)}\right)=3=\deg\left(\rooot{S_n}\right)$. Next, we add a leaf $\rooot{S_{n+1}}$ to $v_{2k+5}$, which will be our new root for the next tree $S_{n+1}$; and another leaf $y$ to $v_{2k+2}$. Finally, we take another copy $\hat{D}_n$ of a rooted binary tree of height $b_{n}+3$ and connect its root via an edge to $v_{2k+3}$. This completes the construction of $\tilde{S}_n$.

By the induction hypothesis, certain leaves of $T_n$ have been coloured with one of the two colours $R_n \cup B_n$, and also some leaves of $S_n$ have been coloured with one of the two colours $R_n \cup B_n$. In the above construction, we colour leaves of $\hat{S}_n$, $\hat{T}_n(r)$ and $\hat{T}_n(\hat{v})$ accordingly:
\begin{align}
\begin{split}
\tilde{R}_n &= \p{R_n \cup \set{\hat{w} \in \hat{S}_n \cup \hat{T}_n(r) \cup \hat{T}_n(\hat{v})}:{w \in R_n}} \setminus \Set{\rooot{T_n},\rooot{\hat{T}_n(r)}},  \nonumber \\
\tilde{B}_n &= \p{B_n \cup \set{\hat{w} \in \hat{S}_n \cup \hat{T}_n(r) \cup \hat{T}_n(\hat{v})}:{w \in B_n}} \setminus \Set{\rooot{S_n},\rooot{\hat{S}_n}}.
\end{split}
\numberthis\label{defoftildeBn}
\end{align}

Now put $G_n := \tilde{T}_n \sqcup \tilde{S}_n$ and consider the following promise structure $\script{P}=\p{G_n, \vec{P}, \script{L}}$ on $G_n$, consisting of four promise edges $\vec{P} = \Set{\vec{p}_1,\vec{p}_2,\vec{p}_3, \vec{p}_4}$ and corresponding leaf sets $\script{L}=\Set{L_1, L_2, L_3, L_4}$, as follows:
\begin{align} 
\begin{split}
\bullet \; \vec{p}_1 &\text{ pointing in } T_{n} \text{ towards the root }\rooot{T_n}, \text{ with } L_1=\tilde{R}_n, \\
\bullet \; \vec{p}_2 &\text{ pointing in }S_n \text{ towards the root }\rooot{S_n}, \text{ with } L_2=\tilde{B}_n, \\
\bullet \; \vec{p}_3 &\text{ pointing in }\tilde{T}_n \text{ towards the root }\rooot{T_{n+1}}, \text{ with } L_3=\Set{\rooot{T_{n+1}},y}, \\
\bullet \; \vec{p}_4 &\text{ pointing in } \tilde{S}_n \text{ towards the root } \rooot{S_{n+1}}, \text{ with } L_4=\Set{\rooot{S_{n+1}}, g}.
\end{split}
\numberthis\label{defofpromises}
\end{align}

Note that our construction so far has been tailored to provide us with a $\vec{P}$-respecting isomorphism
\begin{align}
h \colon \tilde{T}_n - v \to \tilde{S}_n - \hat{v}. \numberthis \label{defofhn+1}
\end{align}

Consider the closure $\cl(G_n)$ with respect to the promise structure $\script{P}$ defined above. 
Since $\cl(G_n)$ is a leaf-extension of $G_n$, it has two connected components, just as $G_n$. We now define
\begin{align}
\begin{split}
T_{n+1} &= \text{ the component containing } T_n \text{ in } \cl(G_n), \text{ and} \\
S_{n+1} &= \text{ the component containing } S_n \text{ in } \cl(G_n).
\end{split}
\numberthis\label{defofTnplus1andSnplus1}
\end{align}
It follows that $\cl(G_n) = T_{n+1} \sqcup S_{n+1}$ and $\hat{v} \in V(S_{n+1})$. Further, since $\vec{p}_3$ and $\vec{p}_4$ are placeholder promises, $\cl(G)$ carries a corresponding promise structure, see Definition~\ref{def_closurestructure}. We define 
\begin{align}
R_{n+1} = \cl(L_3) \; \text{ and } \; B_{n+1} = \cl(L_4).
\numberthis\label{defofKnplus1andLnplus1}
\end{align}
Lastly, we set
\begin{align}
\begin{split}
X_{n+1} &= X_n \cup \singleton{v}, \\
Y_{n+1} &= Y_n \cup \singleton{\hat{v}}, \text{ and} \\
\varphi_{n+1} &= \varphi_n \cup \Set{(v,\hat{v})},
\end{split}\numberthis \label{nextXandY}
\end{align}
and put
\begin{align}
k_{n+1} = 2\tilde{k}_n + 3 \; \text{ and } \; b_{n+1} = b_n + 3 \numberthis \label{newknbn}
\end{align}
The construction of trees $T_{n+1}$ and $S_{n+1}$, coloured leaf sets $R_{n+1}$ and $B_{n+1}$, the bijection $\varphi_{n+1} \colon X_{n+1} \to Y_{n+1}$, and integers $k_{n+1}$ and $b_{n+1}$ is now complete. In the following, we verify that  \ref{nestedtrees}--\ref{hypomorphism} are indeed satisfied for the $(n+1)^{\text{th}}$ instance.

\subsection{The inductive step: verification}
\label{subsec:verification} 

\begin{claim}
$T_{n+1}$ and $S_{n+1}$ extend $T_{n}$ and $S_{n}$. Moreover, they are rooted trees of maximum degree 3 such that their respective roots are contained in $R_{n+1}$ and $B_{n+1}$. Hence, \ref{nestedtrees}--\ref{colouredroots} are satisfied. 
\end{claim}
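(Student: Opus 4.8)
The plan is to verify the three structural properties \ref{nestedtrees}--\ref{colouredroots} for the stage $n+1$ objects in turn, in each case simply unwinding the construction of $T_{n+1}$ and $S_{n+1}$ from Section~\ref{subsec:constrinducstep} together with the elementary facts about the promise closure established in Section~\ref{s:closure}. First I would record that $\tilde T_n$ and $\tilde S_n$ are trees: each is obtained from the tree $T_n$, respectively $S_n$, by a finite sequence of operations — attaching a path at a vertex, identifying the root of a rooted tree ($\hat S_n$ or $\hat T_n(r)$) with a vertex, adjoining a pendant leaf, and attaching a rooted tree ($D_n$, $\hat D_n$, or $\hat T_n(\hat v)$) by a single new edge — each of which sends a forest to a forest while preserving connectivity. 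Hence $G_n = \tilde T_n \sqcup \tilde S_n$ is a forest with exactly two components, and since by Proposition~\ref{p:closure} the forest $\cl(G_n)$ is a leaf extension of $G_n$, every one of its components contains exactly one component of $G_n$; so $\cl(G_n)$ has precisely two components, which by \eqref{defofTnplus1andSnplus1} are the trees $T_{n+1} \supseteq \tilde T_n \supseteq T_n$ and $S_{n+1} \supseteq \tilde S_n \supseteq S_n$. As $T_n$ is a connected subgraph of the tree $T_{n+1}$, no extra edge of $T_{n+1}$ can join two vertices of $T_n$ without closing a cycle, so $T_n$ is an induced subgraph of $T_{n+1}$, and likewise for $S_n$. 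This is \ref{nestedtrees}.

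For \ref{123} it suffices, since $\Delta(\cl(G_n)) = \Delta(G_n)$ as remarked after Proposition~\ref{p:closure}, to check that $\Delta(\tilde T_n) \le 3$ and $\Delta(\tilde S_n) \le 3$, and I would do this by walking along the construction vertex by vertex. In $\tilde T_n$ the two path endpoints $\rooot{T_n}$ and $\rooot{\hat S_n}$ are leaves of $T_n$ and $\hat S_n$ respectively (by \ref{colouredroots} at stage $n$, together with the fact that $R_n$ and $B_n$ consist of leaves), so attaching the path and one pendant leaf to each brings them to degree $3$; each interior path vertex has degree $2$ and receives at most one further incident edge, namely the leaf $\rooot{T_{n+1}}$ at $u_{2k+2}$, the edge to the root of $D_n$ at $u_{2k+3}$, and the leaf $g$ at $u_{2k+5}$, and these are distinct interior vertices since $0 < 2k+2 < 2k+3 < 2k+5 < p$; and $D_n$, being a binary tree, has all degrees at most $3$ with its root of degree $2$. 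The verification for $\tilde S_n$ is the same save for one extra point: $\hat T_n(\hat v)$ is attached by the edge $e(\hat v)$ from the interior path vertex $v_{k+1}$ (distinct from $v_{2k+2}, v_{2k+3}, v_{2k+5}$) to the copy $\hat v$ of $v$, and $\hat v$ has degree $\deg_{T_n}(v) - 1 \le 2$ in $\hat T_n(\hat v)$ because the edge $e(v)$ incident to $v$ was deleted in forming $T_n(v)$ — so adding $e(\hat v)$ still leaves it at degree at most $3$. This gives \ref{123}.

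For \ref{colouredroots}: by construction $\rooot{T_{n+1}}$ is a pendant leaf of $\tilde T_n \subseteq T_{n+1}$, hence a vertex of $T_{n+1}$, and it is declared to be the root of $T_{n+1}$. By \eqref{defofpromises} we have $\rooot{T_{n+1}} \in L_3$, and $\vec p_3$ is a placeholder promise because the component of $\tilde T_n$ beyond $\vec p_3$ is the single leaf $\rooot{T_{n+1}}$; hence, by \eqref{defofKnplus1andLnplus1} and Definition~\ref{def_closurestructure}, $R_{n+1} = \cl(L_3) = \bigcup_m L_3^{(m)}$, and since the chain $L_3^{(0)} = L_3 \subseteq L_3^{(1)} \subseteq \cdots$ is increasing (as observed just before Definition~\ref{def_closurestructure} for placeholder promises), $\rooot{T_{n+1}} \in R_{n+1}$. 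The symmetric argument, with $\vec p_4$ and $L_4$ replacing $\vec p_3$ and $L_3$, shows that $\rooot{S_{n+1}}$ is the root of $S_{n+1}$ and lies in $B_{n+1} = \cl(L_4)$. Together these establish \ref{colouredroots} and complete the claim.

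The only step that needs real care is \ref{123}; the delicate point there is that the copy $\hat v$ of $v$ has degree at most $2$ in $\hat T_n(\hat v)$ even though $v$ itself may have degree $3$ in $T_n$ — this is exactly what leaves room to attach $e(\hat v)$ without breaking the degree bound — and one must also carry out the (routine) check that the attachment indices $k+1$, $2k+2$, $2k+3$, $2k+5$ are pairwise distinct interior points of the relevant paths, which holds because $k = \tilde k_n \ge 2$.
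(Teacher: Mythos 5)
Your proof is correct and follows essentially the same route as the paper's: \ref{nestedtrees} from the leaf-extension property \ref{i:extension} of the closure (plus the observation that a connected subgraph of a tree is automatically induced), \ref{123} from $\Delta(\cl(G_n))=\Delta(G_n)$, and \ref{colouredroots} from $R_{n+1}=\cl(L_3)\supseteq L_3\ni\rooot{T_{n+1}}$ via the placeholder-promise mechanism. The paper states these three facts in three lines without spelling out that $\Delta(G_n)=3$; you supply the vertex-by-vertex check of $\tilde T_n$ and $\tilde S_n$, including the genuinely relevant point that $\hat v$ has degree $\deg_{T_n}(v)-1\le 2$ inside $\hat T_n(\hat v)$ so that the extra edge $e(\hat v)$ keeps it within the bound, and that the attachment indices $k+1,2k+2,2k+3,2k+5$ are pairwise distinct interior path vertices. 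That extra detail is welcome but does not change the structure of the argument.
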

\begin{proof}
Property~\ref{nestedtrees} follows from \ref{i:extension}, i.e.\ that $\cl(G_n)$ is a leaf-extension of $G_n$. Thus, $T_{n+1}$ is a leaf extension of $\tilde{T_n}$, which in turn is a leaf extension of $T_n$, and similar for $S_n$. This shows \ref{nestedtrees}.

As noted after the proof of Proposition \ref{p:closure}, taking the closure does not affect the maximum degree, i.e.\ $\Delta(\cl(G_n)) = \Delta(G_n)=3$. This shows \ref{123}. 

Finally, (\ref{defofKnplus1andLnplus1}) implies \ref{colouredroots}, as $\rooot{T_{n+1}} \in R_{n+1}$ and $\rooot{S_{n+1}} \in B_{n+1}$.
\end{proof}

\begin{claim}\label{binarytrees}
All binary trees appearing as subgraphs of $T_{n+1} \sqcup S_{n+1}$ have height at most $b_{n+1}$, and every such tree of height $b_{n+1}$ is some copy $D_{n}$ or $\hat{D}_{n}$. Hence, $T_{n+1}$ and $S_{n+1}$ satisfy \ref{nobigbinarytrees}.
\end{claim}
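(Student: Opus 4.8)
The plan is to exploit the block structure of $\cl(G_n)$ together with a rigidity property of binary trees. First I would recall that, by the construction of the closure, $T_{n+1}\sqcup S_{n+1}=\cl(G_n)$ is obtained from $G_n=\tilde T_n\sqcup\tilde S_n$ by repeatedly gluing a rooted copy of $C_1:=G_n(\vec p_1)\subseteq\tilde T_n$ at every promise leaf in $\tilde R_n$, and a rooted copy of $C_2:=G_n(\vec p_2)\subseteq\tilde S_n$ at every promise leaf in $\tilde B_n$; the placeholder promises $\vec p_3,\vec p_4$ contribute nothing, since $C_3,C_4$ are single vertices. Since $\vec p_1$ points in $T_n$ towards the leaf $\rooot{T_n}$, the root of $C_1$ has degree $2$ in $C_1$, and symmetrically for $C_2$. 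Hence $\cl(G_n)$ is an edge-disjoint union of ``blocks'', each isomorphic to a subtree of $\tilde T_n$ or of $\tilde S_n$, any two of which meet in at most one ``portal'' vertex $w$ obtained by identifying a promise leaf of one block with the degree-$2$ root of a copy of $C_1$ or $C_2$ in another. Such a $w$ has $\deg_{\cl(G_n)}(w)=3$; one of its neighbours is an uncoloured leaf of $\cl(G_n)$ (a copy of the extra leaf adjoined at $\rooot{T_n}$, resp.\ $\rooot{S_n}$) and another starts a bare path of length $2\tilde{k}_n+2\ge 6$ (a copy of $u_0u_1\cdots u_{2\tilde{k}_n+2}$, resp.\ $v_pv_{p-1}\cdots v_{2\tilde{k}_n+2}$).

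Next I would use the following rigidity fact: a complete binary tree of height $h\ge 3$ has a unique vertex of degree $2$ (its root), and every other non-leaf vertex has degree $3$. Hence a subgraph $D$ of $\cl(G_n)$ isomorphic to such a tree contains at most one vertex that has degree $2$ in $\cl(G_n)$, and in particular $D$ cannot contain a path of length $\ge 3$ whose internal vertices all have degree $2$ in $\cl(G_n)$; that is, $D$ cannot cross a bare path of $\cl(G_n)$ of length $\ge 3$. Combining this with the local picture at a portal $w$ (a leaf-neighbour, and a neighbour starting a long bare path), a short case distinction on $\deg_D(w)$ shows that whenever such a $D$ meets two blocks, it intersects one of them at $w$ in at most the three vertices of a cherry, and at least one of the two subtrees hanging from the root of $D$ lies inside a single block. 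In particular $\cl(G_n)$ then contains a complete binary tree of height $h-1$ inside a single block, i.e.\ inside $\tilde T_n$ or inside $\tilde S_n$.

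For the single-block bound I would use the explicit shape of $\tilde T_n$ (the case $\tilde S_n$ being entirely symmetric): it consists of $T_n$ with a bare path $u_0\cdots u_p$ of length $p=4(\tilde{k}_n+1)+3$ adjoined at the leaf $u_0=\rooot{T_n}$, a copy $\hat S_n$ of $S_n$ adjoined at $u_p$, the binary tree $D_n$ of height $b_n+3$ adjoined by a single edge at $u_{2\tilde{k}_n+3}$, and four pendant leaves. The outer bare subpaths $u_0\cdots u_{2\tilde{k}_n+2}$ and $u_{2\tilde{k}_n+5}\cdots u_p$ both have length $\ge 6$, so by the rigidity fact a complete binary tree of height $\ge 3$ inside $\tilde T_n$ can meet at most one of the three ``clumps'' $T_n$, $\hat S_n$, and the bush $\{u_{2\tilde{k}_n+2},u_{2\tilde{k}_n+3},u_{2\tilde{k}_n+5}\}\cup D_n$. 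Inside $T_n$ or inside $\hat S_n$ its height is at most $b_n$ by the induction hypothesis \ref{nobigbinarytrees}. Inside the bush together with $D_n$, the two edges leaving $u_{2\tilde{k}_n+3}$ other than the one into $D_n$ lead only to $u_{2\tilde{k}_n+2}$, $\rooot{T_{n+1}}$, $u_{2\tilde{k}_n+4}$, $u_{2\tilde{k}_n+5}$, $g$ and thence into bare paths, so no complete binary tree can place a vertex above the root of the copy of $D_n$ (that vertex would need a second subtree of height $b_n+3\ge 6$, which is unavailable there); hence $D$ is a subtree of that copy of $D_n$, of height at most $b_n+3$, with equality precisely when $D$ is the copy of $D_n$ itself.

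Putting this together with the localisation step gives $h-1\le b_n+3$, hence $h\le b_n+4$, for every complete binary tree of height $h\ge 3$ in $\cl(G_n)$. The step I expect to be the main obstacle is sharpening this to the required $h\le b_{n+1}=b_n+3$ and identifying the extremal trees: one has to return to the spanning case and observe that if the subtree of $D$ that was localised into a single block had height $b_n+3$ it would be a copy of $D_n$ or $\hat D_n$, whose root is adjacent only to a copy of $u_{2\tilde{k}_n+3}$ (of degree $3$), so the other subtree demanded by completeness of $D$ would have to be a complete binary tree of height $b_n+3$ rooted at a copy of $u_{2\tilde{k}_n+2}$ or $u_{2\tilde{k}_n+4}$, which is impossible (these lead only into leaves and bare paths). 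The same degree count, pushed one step further, shows that a portal or the vertex $u_0$ can never support a copy of $D_n$ in a way that lengthens a binary tree, so a complete binary tree of height exactly $b_n+3$ cannot span two blocks and must therefore be a copy of $D_n$ or of $\hat D_n$ by the single-block analysis; binary trees of height at most $2$ are single vertices or paths on three vertices and are harmless. Carrying out these degree chases at the bush and at the portals carefully is the only delicate part; the strengthened conclusion (binary trees of height $b_{n+1}$ are copies of $D_n$ or $\hat D_n$), and not merely the bound \ref{nobigbinarytrees}, is what the later claims on non-embeddability will use.
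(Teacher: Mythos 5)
Your overall strategy is the one the paper uses: exploit the long bare paths and the local degree picture at the vertices where copies of $C_1$ and $C_2$ are glued on to confine any binary tree of height at least $3$ to (essentially) a single block, and then read off the bound and the extremal trees from the explicit shape of $\tilde T_n$ and $\tilde S_n$. Two of your intermediate claims are not quite right, though, and they matter for the final sharpening, which you yourself flag as the delicate point.

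First, the assertion that a subgraph $D$ of $\cl(G_n)$ isomorphic to a binary tree of height $h\ge 3$ ``contains at most one vertex that has degree $2$ in $\cl(G_n)$'' is false: the leaves of $D$ have degree $1$ in $D$ and may well have degree $2$ in $\cl(G_n)$, so there can be many such vertices. What you actually use, and what is true, is the next sentence (no bare path of length $\ge 3$ inside $D$), and that one should be the stated lemma.

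Second, and more importantly, the localisation step is stated incorrectly: it is not true in general that ``at least one of the two subtrees hanging from the root of $D$ lies inside a single block.'' A binary tree rooted in block $A$ may spill over through two distinct portals, one in each of the two subtrees of its root, in which case neither subtree is confined to a block. What is true, and is what the paper exploits, is that every vertex of $D$ outside $A$ is a leaf of $D$ (it lies at a copy of one of the two degree-$\le 2$ neighbours of the root of $C_1$ or $C_2$), so the truncation of $D$ to depths $0,\dots,h-2$ is a binary tree of height $h-1$ lying entirely in $A$. Your sharpening argument is phrased in terms of ``the other subtree'' of the root and so does not literally apply once this is corrected; the clean way to close it, and what the paper does, is to observe that those leaves of $\tilde T_n$ (or of a copy of $C_1$, $C_2$) at which the closure glues are all in $T_n$, $\hat S_n$, $\hat T_n(r)$, $\hat T_n(\hat v)$ or $S_n$ (and never in $D_n$ or $\hat D_n$). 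Hence the truncated height-$(h-1)$ tree meets one of those clumps, and your clump analysis — augmented by the one extra level available at $\rooot{T_n}$ and $\rooot{\hat S_n}$, which you currently omit when you say ``inside $T_n$ its height is at most $b_n$'' — gives $h-1\le b_n+1$ directly, i.e.\ $h\le b_n+2$ for any tree that is not contained in a single block. This avoids your detour through the weaker bound $h\le b_n+4$ and the subsequent patching, and it makes the identification of the height-$b_{n+1}$ trees with the copies of $D_n$ and $\hat D_n$ immediate. So: right idea and right structural facts, but the localisation lemma and the ``clump bound'' need to be corrected as above before the argument closes.
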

\begin{proof}
We first claim that all binary trees appearing as subgraphs of $\tilde{T}_n \sqcup \tilde{S}_n$ which are not contained in $D_n$ or $\hat{D}_n$ have height at most $b_n + 1$. Indeed, note that any binary tree appearing as a subgraph of $T_n$, $\hat{T}_n(r)$, $\hat{T}_n(v)$, $\hat{S}_n$ or $S_n$ has height at most $b_n$ by the inductive hypothesis. Since the paths we added to the roots of $T_n$ and $\hat{S}_n$ to form $\tilde{T}_n$ were sufficiently long, any binary tree appearing as a subgraph of $\tilde{T}_n$ can only meet one of $T_n$, $\hat{S}_n$ or $D_n$. Since the roots of $T_n$ and $\hat{S}_n$ are adjacent to two new vertices in $\tilde{T}_n$, one of degree $1$, any such tree meeting $T_n$ or $\hat{S}_n$ must have height at most $b_n + 1$. By Figure \ref{constructionfigure2} we see that any binary tree in $\tilde{T}_n$ which meets $D_n$ but whose root lies outside of $D_n$ has height at most $3 \leq b_n +1$. Consider then a binary tree whose root lies inside $D_n$, but that is not contained in $D_n$. Again, by Figure \ref{constructionfigure2} we see that the root of $D_n$ must lie in one of the bottom three layers of this binary tree. Hence, if the root of this tree lies on the $k$th level of $D_n$, then the tree can have height at most $\min\{b_n+3 - k, k+2\}$, and hence the tree has height at most $b_n/2 + 2 \leq b_n +1$. Any other binary tree meeting $D_n$ is then contained in $D_n$. It follows that the only binary tree of height $b_n +3$ appearing as a subgraph of $\tilde{T}_n$ is $D_n$, and a similar argument holds for $\tilde{S}_n$ and $\hat{D}_n$.

Recall that $T_{n+1}$ and $S_{n+1}$ are the components of $\cl(\tilde{T}_n \sqcup \tilde{S}_n)$ containing $\tilde{T}_n$ and $\tilde{S}_n$ respectively. If we refer back to Section \ref{s:closure} we see that $T_{n+1}$ can be formed from $\tilde{T_n}$ by repeatedly gluing components isomorphic to $\tilde{T}_n(\vec{p_1})$ or $\tilde{S}_n(\vec{p_2})$ to leaves. Consider a binary tree appearing as a subgraph of $T_{n+1}$ which is contained in $\tilde{T_n}$ or one of the copies of $\tilde{T}_n(\vec{p_1})$ or $\tilde{S}_n(\vec{p_2})$. By the previous paragraph, this tree has height at most $b_n + 3$, and if it has height $b_n + 3$ it is a copy $D_n$ or $\hat{D}_n$. Suppose then that there is a binary tree, of height $b$, whose root is in $\tilde{T_n}$, but is not contained in $\tilde{T}_n$. Such a tree must contain some vertex $\ell \in \tilde{T}_n$ which is adjacent to a vertex not in $\tilde{T}_n$. Hence, $\ell$ must have been a leaf in $\tilde{T}_n$ at which a copy of $\tilde{T}_n(\vec{p_1})$ or $\tilde{S}_n(\vec{p_2})$ was glued on. However, the roots of each of these components are adjacent to just two vertices, one of degree $1$, and hence this leaf $\ell$ must either be in the bottom, or second to bottom layer of the binary tree. Therefore, $b \leq b_n + 2$. A similar argument holds when the root lies in some copy of $\tilde{T}_n(\vec{p_1})$ or $\tilde{S}_n(\vec{p_2})$, and also for $S_{n+1}$.

Therefore, all binary trees appearing as subgraphs of $T_{n+1} \sqcup S_{n+1}$ have height at most $b_n + 3$, and every such tree is some copy $D_{n}$ or $\hat{D}_{n}$. Hence, since $b_{n+1} = b_n + 3$, it follows that $b_{n+1} \geq b_n$ and $T_{n+1}$ and $S_{n+1}$ satisfy \ref{nobigbinarytrees}.
\end{proof}

\begin{claim}
Every maximal bare path in $T_{n+1} \sqcup S_{n+1}$ has length at most $k_{n+1}$. Hence, $T_{n+1}$ and $S_{n+1}$ satisfy \ref{kbigenough}.
\end{claim}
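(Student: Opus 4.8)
The plan is to reduce the statement to a bound on the lengths of maximal bare paths in the two intermediate trees $\tilde{T}_n$ and $\tilde{S}_n$, and then to read that bound off the construction. Write $k := \tilde{k}_n$, so that $k_{n+1} = 2k+3$, the path attached at each root in the construction has length $p = 4(k+1)+3 = 4k+7$, and $k/2$ is an integer which, by the definition of $\tilde{k}_n$ (well-defined by \ref{kbigenough}), bounds every bare path in $T_n$, $S_n$, $T_n(r)$ and $T_n(v)$. Recall from Section~\ref{s:closure} that $\cl(G_n)$ is obtained from $G_n = \tilde{T}_n \sqcup \tilde{S}_n$ by repeatedly gluing, at every leaf of $\tilde{R}_n$ and every later copy of such a leaf, a rooted copy of $C_1 := G_n(\vec{p}_1)$, and at every leaf of $\tilde{B}_n$ a rooted copy of $C_2 := G_n(\vec{p}_2)$; the placeholder promises $\vec{p}_3, \vec{p}_4$ only ever glue single vertices onto leaves and so change nothing. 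Note that $C_1 \subseteq \tilde{T}_n$ and $C_2 \subseteq \tilde{S}_n$. The decisive point is that the root of $C_1$ is the copy of $\rooot{T_n}$, whose only neighbours inside $C_1$ are $u_1$ and the pendant leaf added at $u_0$ in the construction, so it has degree $2$ in $C_1$; likewise the root of $C_2$, the copy of $\rooot{S_n}$, has degree $2$ in $C_2$. Hence whenever a copy of $C_i$ is glued at a leaf $\ell$, the identified vertex acquires degree $3$; so every gluing vertex of $\cl(G_n)$ has degree $3$ and thus cannot be an interior vertex of a bare path. Consequently every maximal bare path of $\cl(G_n)$ lies inside a single copy of $\tilde{T}_n$, $\tilde{S}_n$, $C_1$ or $C_2$, and moreover any degree-$2$ vertex of $\cl(G_n)$ inside a copy of $C_1$ is neither that copy's root nor a promise leaf, hence corresponds to a non-root degree-$2$ vertex of $C_1$, which is a degree-$2$ vertex of $\tilde{T}_n$. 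So a bare path inside a copy of $C_1$ is isomorphic to a bare path of $\tilde{T}_n$, and similarly for $C_2$ and $\tilde{S}_n$: it suffices to bound the maximal bare paths of $\tilde{T}_n$ and $\tilde{S}_n$ by $k_{n+1}$ and to check that all bare paths are finite.

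Next I would bound the bare paths of $\tilde{T}_n$ directly. A maximal bare path either lies inside one of the blocks $T_n$, $\hat{S}_n \cong S_n$, $D_n$, or it meets the attached path $u_0 u_1 \cdots u_p$. Bare paths in $T_n$ and $\hat{S}_n$ have length at most $k/2$, those in the binary tree $D_n$ have length $1$, and none of these extends beyond its block because the attachment vertices $\rooot{T_n}$, $\rooot{\hat{S}_n}$ and the root of $D_n$ have degree $3$ in $\tilde{T}_n$. Along $u_0 \cdots u_p$ the degree-$3$ vertices are exactly $u_0$ (carrying $T_n$ and a pendant leaf), $u_{2k+2}$ (pendant leaf $\rooot{T_{n+1}}$), $u_{2k+3}$ (the copy $D_n$), $u_{2k+5}$ (pendant leaf $g$) and $u_p$ (carrying $\hat{S}_n$ and a pendant leaf), so the maximal bare subpaths of $u_0 \cdots u_p$ have lengths $2k+2$, $1$, $2$ and $p - (2k+5) = 2k+2$. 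Hence every maximal bare path of $\tilde{T}_n$ has length at most $2k+2 = 2\tilde{k}_n + 2 < k_{n+1}$.

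The analysis of $\tilde{S}_n$ is symmetric, but with one extra case to watch: the copy $\hat{T}_n(\hat{v}) \cong T_n(v)$ is joined to $v_{k+1}$ (degree $3$) by the edge $e(\hat{v})$, and $\hat{v}$ itself may have degree $2$ in $\tilde{S}_n$; in that case a bare path through $\hat{v}$ runs from $v_{k+1}$ along $e(\hat{v})$ into $\hat{T}_n(\hat{v})$ and so has length at most $1 + k/2 < k_{n+1}$. Otherwise the maximal bare subpaths of $v_0 \cdots v_p$ have lengths $k+1$, $k+1$, $1$, $2$ and $2k+2$, and bare paths inside $\hat{T}_n(r) \cong T_n(r)$, $\hat{T}_n(\hat{v}) \cong T_n(v)$, $S_n$ and $\hat{D}_n$ have length at most $k/2$, $k/2$, $k/2$ and $1$, all at most $2k+2 < k_{n+1}$. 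For finiteness, each block ($T_n$, $S_n$, the copies of $T_n(r)$ and $T_n(v)$, the finite attached paths, and the binary trees) has only finite bare paths, by \ref{kbigenough} and the choice of $\tilde{k}_n$, and no bare path of $\cl(G_n)$ crosses between blocks; so every maximal bare path of $T_{n+1} \sqcup S_{n+1} = \cl(G_n)$ is finite of length at most $2\tilde{k}_n + 2 \le k_{n+1}$, establishing \ref{kbigenough} for $n+1$.

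I expect the main obstacle to be the reduction in the first paragraph: one must check carefully that the closure only ever glues copies at leaves and that every gluing vertex ends up with degree exactly $3$ — this is precisely where the pendant leaves added at $u_0, u_p, v_0, v_p$ in the construction are used, and the argument would break down without them. Once that is in hand, the rest is the routine bookkeeping of branch vertices along the two attached paths, with the only genuine subtlety being the possibly-degree-$2$ vertex $\hat{v}$.
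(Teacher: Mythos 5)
Your proof is correct and follows essentially the same strategy as the paper's: first bound maximal bare paths in the intermediate trees $\tilde T_n$ and $\tilde S_n$, then observe that every vertex at which the promise closure glues a copy of $C_1=G_n(\vec p_1)$ or $C_2=G_n(\vec p_2)$ acquires degree~$3$ (because those roots have degree~$2$ in $C_i$ thanks to the pendant leaves at $u_0,u_p,v_0,v_p$), so no maximal bare path of $\cl(G_n)$ crosses a gluing point. You are slightly more explicit than the paper in two spots: you list the degree-$3$ vertices along the attached paths to obtain the sharp bound $2\tilde k_n+2$ rather than the paper's looser (but still sufficient) $2\tilde k_n+3$, and you separately handle the possibility that $\hat v$ has degree~$2$ in $\tilde S_n$, which the paper absorbs implicitly into its ``contained in a block / not meeting its interior'' dichotomy; your treatment is the more careful of the two, though both yield the required bound since $1+\tilde k_n/2\leq k_{n+1}$.
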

\begin{proof}
We first claim that all maximal bare paths in $\tilde{T}_n \sqcup \tilde{S}_n$ have length at most $2\tilde{k}_n + 3$. Firstly, we note that any maximal bare path which is contained in $T_n$ or $\hat{S}_n$ has length at most $k_n \leq \tilde{k}_n$ by the induction hypothesis. Also, since the roots of $T_n$ and $\hat{S}_n$ have degree $3$ in $\tilde{T}_n$, any maximal bare path is either contained in $T_n$ or $\hat{S}_n$, or does not contain any interior vertices from $T_n$ or $\hat{S}_n$. However, it is clear from the construction that any maximal bare path in $\tilde{T}_n$ that does not contain any interior vertices from $T_n$ or $\hat{S}_n$ has length at most $2\tilde{k}_n + 3$. Similarly, any maximal bare path which is contained in $\hat{T}_n(r)$, $\hat{T}_n(v)$, or $S_n$ has length at most $\tilde{k}_n$ by definition. By the same reasoning as above, any maximal bare path in $\tilde{S}_n$ not contained in $\hat{T}_n(r)$, $\hat{T}_n(v)$, or $S_n$ has length at most $2\tilde{k}_n + 3$.

Again, recall that $T_{n+1}$ can be formed from $\tilde{T_n}$ by repeatedly gluing components isomorphic to $\tilde{T}_n(\vec{p_1})$ or $\tilde{S}_n(\vec{p_2})$ to leaves. Any maximal bare path in $T_{n+1}$ which is contained in $\tilde{T_n}$ or one of the copies of $\tilde{T}_n(\vec{p_1})$ or $\tilde{S}_n(\vec{p_2})$ has length at most $2\tilde{k}_n + 3$ by the previous paragraph. However, since every interior vertex in a maximal bare path has degree two, and the vertices in $T_{n+1}$ at which we, at some point in the construction, stuck on copies of $\tilde{T}_n(\vec{p_1})$ or $\tilde{S}_n(\vec{p_2})$ have degree $3$, any maximal bare path in $T_{n+1}$ must be contained in $\tilde{T_n}$ or one of the copies of $\tilde{T}_n(\vec{p_1})$ or $\tilde{S}_n(\vec{p_2})$. Again, a similar argument holds for $S_{n+1}$. Hence, all maximal bare paths in $T_{n+1} \sqcup S_{n+1}$ have length at most $2\tilde{k}_n + 3$. Therefore, since $k_{n+1} = 2\tilde{k}_n + 3$, it follows that $k_{n+1} \geq k_n$ and $T_{n+1}$ and $S_{n+1}$ satisfy \ref{kbigenough}.
\end{proof}

\begin{claim}
$\Ball_{T_{n+1}}(T_{n}, k_{n}+1)$ is a bare extension of $T_{n}$ at $R_{n} \cup B_{n}$ to length $k_{n}+1$ and does not meet $R_{n+1} \cup B_{n+1}$ and similarly for $S_{n+1}$. Hence, $T_{n+1}$ and $S_{n+1}$ satisfy \ref{ballsT} and \ref{ballsS} respectively.
\end{claim}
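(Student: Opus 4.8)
The plan is to describe $\Ball_{T_{n+1}}(T_n,k_n+1)$ explicitly, read off that it is the required bare extension, and then check that it is disjoint from $R_{n+1}\cup B_{n+1}$. By symmetry it suffices to treat $T_{n+1}$, the statement for $S_{n+1}$ being analogous; throughout I keep the notation of the construction, with $n=2m$ even.

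First I would recall from Section~\ref{s:closure} that $T_{n+1}$ arises from $\tilde T_n$ by repeatedly gluing a rooted copy of $C_1:=\tilde T_n(\vec p_1)$ at each promise leaf in $L_1=\tilde R_n$ and a rooted copy of $C_2:=\tilde S_n(\vec p_2)$ at each promise leaf in $L_2=\tilde B_n$ (the placeholder promises $\vec p_3,\vec p_4$ contribute nothing to the underlying graph). Here $C_1$ is, as a rooted tree, exactly the part of $\tilde T_n$ beyond $\rooot{T_n}$, and $C_2$ exactly the part of $\tilde S_n$ beyond $\rooot{S_n}$. In $\tilde T_n$ the only vertex of $T_n$ gaining new neighbours is $\rooot{T_n}\in R_n$, while in the closure further gluings onto $T_n$ occur precisely at the promise leaves in $(\tilde R_n\cup\tilde B_n)\cap V(T_n)=\bigl((R_n\cup B_n)\cap V(T_n)\bigr)\setminus\{\rooot{T_n}\}$; hence the vertices of $T_n$ with a neighbour outside $T_n$ in $T_{n+1}$ are exactly those of $(R_n\cup B_n)\cap V(T_n)$, and off each such $\ell$ hangs a copy of $C_1$ (if $\ell\in R_n$; for $\ell=\rooot{T_n}$ this is $C_1$ itself) or of $C_2$ (if $\ell\in B_n$). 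Since $T_{n+1}$ is a tree, any vertex $x$ of the tree hanging off $\ell$ satisfies $d_{T_{n+1}}(x,V(T_n))=d(x,\ell)$.

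The quantitative input I would then record is $\tilde k_n\ge k_n$: for $n\ge1$, $T_n$ contains the bare path of length $2\tilde k_{n-1}+2=k_n-1$ produced at step $n-1$ by attaching a path at $\rooot{T_{n-1}}$, so by definition of $\tilde k_n$ and since $k_n\ge2$ we get $\tilde k_n\ge2(k_n-1)\ge k_n$ (the base case is immediate). Granting this, $C_1$ and $C_2$ have the same shape through their first $k_n+1$ levels: the root has degree $2$, with one neighbour a pendant leaf and the other the first vertex of a bare path whose first branch vertex is at distance $2\tilde k_n+2>k_n+1$ from the root; moreover every attached tree ($\hat S_n,D_n$ on the $T$-side, $\hat T_n(r),\hat T_n(\hat v),\hat D_n$ on the $S$-side) and every placeholder leaf ($\rooot{T_{n+1}},g$ on the $T$-side, $\rooot{S_{n+1}},y$ on the $S$-side) sits at distance $>k_n+1$ from the root, and each further gluing performed by the closure inside such a copy happens at a promise leaf lying inside $\hat S_n$ (resp.\ inside $\hat T_n(r)$ or $\hat T_n(\hat v)$), hence at distance $\ge p=4(\tilde k_n+1)+3>k_n+1$ from the root. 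Combining this with the previous paragraph, $\Ball_{T_{n+1}}(T_n,k_n+1)$ is $T_n$ with, at each $\ell\in(R_n\cup B_n)\cap V(T_n)$, one pendant leaf and one pendant path of length $k_n+1$ adjoined (its interior vertices of degree $2$ in $T_{n+1}$, its far endpoint a leaf of the induced subgraph) --- that is, precisely a bare extension of $T_n$ at $R_n\cup B_n$ to length $k_n+1$.

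It remains to see that this ball avoids $R_{n+1}=\cl(L_3)$ and $B_{n+1}=\cl(L_4)$, i.e.\ avoids $\rooot{T_{n+1}}$, $g$ and all their copies: each such vertex lies inside a glued copy of $C_1$ or $C_2$ at distance $\ge 2\tilde k_n+3>k_n+1$ from the coloured leaf of $T_n$ at which that copy is rooted, hence outside the ball. For $S_{n+1}$ the argument is the mirror image; the only extra point is that the additional copy $\hat T_n(\hat v)$, joined in $\tilde S_n$ to $v_{k+1}$, lies at distance $p-(\tilde k_n+1)>k_n+1$ from $\rooot{S_n}$ and so does not reach into the ball. The main obstacle here is purely the distance bookkeeping --- verifying that every gadget, every attached copy of $\hat S_n$, $\hat T_n(r)$ and the like, and every copy generated by iterating the closure, stays at distance more than $k_n+1$ from $T_n$; this is exactly what the generous length $p=4(\tilde k_n+1)+3$ and the inequality $\tilde k_n\ge k_n$ are there to guarantee, the underlying idea being merely that the construction attaches all genuinely new structure far from $T_n$, except for the single pendant leaf and the initial bare segment at each coloured leaf.
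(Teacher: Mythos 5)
Your proposal is correct and follows essentially the same strategy as the paper: note that in $T_{n+1}$ the only vertices of $T_n$ acquiring new neighbours are those in $(R_n\cup B_n)\cap V(T_n)$, that what hangs off such a leaf $\ell$ begins (through distance $k_n+1$) with exactly a pendant leaf and an initial bare segment of the copy of $C_1$ or $C_2$ rooted at $\ell$, and that all further attachments (gadget trees, placeholder leaves, and all gluings performed during the closure) lie at distance strictly greater than $k_n+1$ from $\ell$. The paper phrases this by introducing the intermediate supertree $T'$ obtained by a single round of gluing, observing $\Ball_{T_{n+1}}(T_n,\tilde{k}_n)=\Ball_{T'}(T_n,\tilde{k}_n)$, and reading off the bare extension from $T'$, while you compute the ball directly; these are the same argument. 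One small point worth flagging: the paper invokes the inequality $\tilde{k}_n\geq k_n+1$ without proof, whereas you explicitly justify the (slightly weaker, and for your purposes sufficient) bound $\tilde{k}_n\geq k_n$ via the observation that $T_n$ contains a maximal bare path of length $k_n-1$ created at the previous step — a useful addition, and it is easily sharpened to $\tilde{k}_n \geq 2(k_n-1)\geq k_n+1$ once $k_n\geq 3$, which holds for all $n\geq 1$ (and directly for $n=0$ from the base trees).
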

\begin{proof}
We will show that $T_{n+1}$ satisfies \ref{ballsT}, the proof that $S_{n+1}$ satisfies \ref{ballsS} is analogous. By Proposition \ref{p:closure}, the tree $T_{n+1}$ is an $\p{ (\tilde{R}_n \cup \tilde{B}_n) \cap V(\tilde{T}_n)}$-extension of $\tilde{T}_n$. Hence $T_{n+1}$ is an 
\begin{align}
\p{ \p{(\tilde{R}_n \cup \tilde{B}_n) \cap V(T_n)} \cup r(T_n)} = \big((R_n \cup B_n) \cap V(T_n)\big)\text{-extension of } T_n. \numberthis \label{bla123}
\end{align}

By looking at the construction of $\cl(G)$ from Section \ref{s:closure}, we see that $T_{n+1}$ is also an $L'$-extension of the supertree $T' \supseteq T_n$ formed by gluing a copy of $\tilde{T}_n(\vec{p_1})$ to every leaf in $R_n \cap V(T_n)$ and a copy of $\tilde{S}_n(\vec{p_2})$ to every leaf in $B_n \cap V(T_n)$, where the leaves in $L'$ are the inherited promise leaves from the copies of $\tilde{T}_n(\vec{p_1})$ and $\tilde{S}_n(\vec{p_2})$. 

However, we note that every promise leaf in $\tilde{T}_n(\vec{p_1})$ and $\tilde{S}_n(\vec{p_2})$ is at distance at least $\tilde{k}_n+1$ from the respective root, and so $\Ball_{T_{n+1}}(T_n, \tilde{k}_n) = \Ball_{T'}(T_n, \tilde{k}_n)$. However, $\Ball_{T'}(T_n,\tilde{k}_n)$ can be seen immediately to be a bare extension of $T_n$ at $R_n \cup B_n$ to length $\tilde{k}_n$, and since $\tilde{k}_n \geq k_n + 1$ it follows that $\Ball_{T_{n+1}}(T_n, k_n + 1)$ is a bare extension of $T_n$ at $R_n \cup B_n$ to length $k_n + 1$ as claimed.

Finally, we note that $R_{n+1} \cup B_{n+1}$ is the set of promise leaves $\cl(\script{L}_n)$. By the same reasoning as before, $\Ball_{T_{n+1}}(T_n, k_n + 1)$ contains no promise leaf in $\cl(\script{L}_n)$, and so does not meet $R_{n+1}  \cup B_{n+1}$ as claimed.
\end{proof}

\begin{claim}
\label{floriansclaim}
Let $U_{n+1}$ be a bare extension of $\cl (G_n) = T_{n+1} \sqcup S_{n+1}$ at $R_{n+1} \cup B_{n+1}$ to any length. Then any embedding of $T_{n+1}$ or $S_{n+1}$ into $U_{n+1}$ fixes the respective root. Hence, $T_{n+1}$ and $S_{n+1}$ satisfy  \ref{noembeddings}.
\end{claim}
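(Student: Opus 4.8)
The plan is, given an embedding $f$ of $T_{n+1}$ into $U_{n+1}$, to pin down $f(\rooot{T_{n+1}})$ exactly; the case of $S_{n+1}$ is symmetric, with the roles of the copies of $D_n,\hat{D}_n$ and of $T_n(r),T_n(v)$ interchanged. Since $T_{n+1}$ is connected, $f(T_{n+1})$ is an induced subtree of $U_{n+1}$ (an edge of $U_{n+1}$ joining two image vertices would close a cycle with the image of the path between their preimages), and hence lies in a single component of $U_{n+1}$, that is, in a bare extension $W$ of $T_{n+1}$ or of $S_{n+1}$ at $R_{n+1}\cup B_{n+1}$. Showing $f(\rooot{T_{n+1}})=\rooot{T_{n+1}}$ then does everything at once: it forces $f$ to fix the root when $W$ is a bare extension of $T_{n+1}$, and it is impossible when $W$ is a bare extension of $S_{n+1}$ (that component does not contain $\rooot{T_{n+1}}$), so no such embedding exists --- which is exactly property \ref{noembeddings} at step $n+1$.

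First I would locate the binary tree. A bare extension attaches only bare paths and pendant leaves, hence creates no new binary tree, so by Claim~\ref{binarytrees} every binary tree of height $b_{n+1}=b_n+3$ in $U_{n+1}$ is one of the (pairwise isomorphic) copies of $D_n$ or $\hat{D}_n$. Thus $f$ maps the distinguished copy of $D_n$ inside $\tilde{T}_n\subseteq T_{n+1}$ isomorphically onto such a copy $D$; a binary tree of height $\ge 6$ has a unique vertex of degree $2$, and $\rooot{D_n}$ has degree $3$ in $T_{n+1}$, so the image of $\rooot{D_n}$ is $\rooot{D}$ and the image of the spine vertex $u_{2k+3}$ (write $k=\tilde{k}_n$) is the unique neighbour $w$ of $\rooot{D}$ outside $D$, namely the backbone vertex carrying $D$. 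Now delete from $T_{n+1}$ all bare paths of length $>2k$. This isolates, as the component of $\rooot{T_{n+1}}$, the finite tree $\mathcal{B}$ consisting of the spine $u_{2k+2}u_{2k+3}u_{2k+4}u_{2k+5}$, the copy $D_n$ at $u_{2k+3}$, and the two pendant leaves $\rooot{T_{n+1}}$ (at $u_{2k+2}$) and $g$ (at $u_{2k+5}$): the flanking stretches $u_0\dotsc u_{2k+2}$ and $u_{2k+5}\dotsc u_p$ have length $2k+2>2k$, whereas all bare paths inside $T_n,\hat{S}_n,D_n$ are far shorter. The analogue $\hat{\mathcal{B}}$ in $\tilde{S}_n$ (carrying $\hat{D}_n,y,\rooot{S_{n+1}}$) is isomorphic to $\mathcal{B}$, the only isomorphism matching $\rooot{T_{n+1}}\leftrightarrow y$ and $g\leftrightarrow\rooot{S_{n+1}}$ because the attachment vertex of the binary tree lies at distance $2$ from one end of the spine and $3$ from the other. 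Tracking pendant leaves, $f$ maps $\mathcal{B}$ isomorphically onto a copy $\mathcal{B}'$ of $\mathcal{B}$ or $\hat{\mathcal{B}}$ in $U_{n+1}$, with $f(\rooot{T_{n+1}})$ the pendant leaf of $\mathcal{B}'$ at the image of $u_{2k+2}$. Since every automorphism of $\mathcal{B}$ comes from the binary tree and hence fixes $\rooot{T_{n+1}}$, it now suffices to show that $\mathcal{B}'$ is the \emph{original} block $\mathcal{B}$.

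The hard part is this last step, and it is where \ref{nobigbinarytrees}--\ref{Sembeddings} at stage $n$ and the exact numerology of the construction all come into play. From $\mathcal{B}$ the backbone leaves, on the $u_{2k+2}$-side, along a maximal bare path of length $2k+2=2(\tilde{k}_n+1)$ reaching $u_0=\rooot{T_n}$, off which hangs the whole subtree $T_n$ of size $|T_n|$ (and a pendant leaf); the image of this path is a bare path of the same length leaving $\mathcal{B}'$. If $\mathcal{B}'$ were an $\hat{D}_n$-block, the corresponding stretch in a copy of $\tilde{S}_n$ meets, after only $\tilde{k}_n+1$ steps, the degree-$3$ vertex $v_{k+1}$ carrying $\hat{T}_n(\hat{v})$ --- the attachment point being placed precisely at the \emph{midpoint} of the long stretch --- and from there one can reach only copies of the proper subtrees $T_n(r)\cong\hat{T}_n(r)$ and $T_n(v)\cong\hat{T}_n(\hat{v})$, which never contain a copy of $T_n$ (by \ref{noembeddings} at stage $n$, and are in any case too small), while $\hat{T}_n(\hat{v})$ is too short to carry the rest of the length-$2(\tilde{k}_n+1)$ bare path, its bare paths having length $\le 2k_n\le\tilde{k}_n$ by Lemma~\ref{l:miibound} and \ref{kbigenough}. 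Likewise, if $\mathcal{B}'$ sat on a \emph{glued} copy of $\tilde{T}_n(\vec{p}_1)$ or $\tilde{S}_n(\vec{p}_2)$, then $f(u_0)$ would be a glued root (a copy of $\rooot{T_n}$, resp.\ of $\rooot{\hat{T}_n(r)}$), beyond which $U_{n+1}$ continues only with a bare stretch of length $\le\tilde{k}_n$ back into the previously built structure, again leaving no room --- via \ref{noembeddings} at stage $n$ together with a size count --- for the attached copy of $T_n$. Hence $\mathcal{B}'=\mathcal{B}$ and $f(u_0)=\rooot{T_n}$. Finally, since $T_n$ has bare paths of length $\le k_n$, the image $f(T_n)$ cannot traverse any long bare path of $U_{n+1}$, so $f\restriction T_n$ is an embedding of $T_n$ into a bare extension of $T_n$ at $R_n\cup B_n$; by \ref{Tembeddings} it fixes $\rooot{T_n}$ and has image $T_n$, and running the argument once more through $\hat{S}_n$ with \ref{Sembeddings} shows that $f\restriction\tilde{T}_n$ is the identity, whence $f(\rooot{T_{n+1}})=\rooot{T_{n+1}}$.

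The genuinely delicate point, the one I expect to require the most care, is the third paragraph: proving that an embedding can neither escape along a glued copy nor jump to an $\hat{D}_n$-block. Everything here leans on the non-embedding and no-big-binary-tree properties at stage $n$ and on the precise choices of the spine length $p=4(\tilde{k}_n+1)+3$ and the midpoint attachment of $\hat{T}_n(\hat{v})$ at $v_{k+1}$; the binary trees $D_n,\hat{D}_n$ themselves do nothing more than pin down the single vertex $w$ and thereby ignite the rigidity argument.
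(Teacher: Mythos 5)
Your overall plan is the same as the paper's: use Claim~\ref{binarytrees} to locate the image of the block containing $D_n$, argue that the spine forces the image of the root, and close with the induction hypotheses \ref{noembeddings}--\ref{Sembeddings}. The first two paragraphs are essentially correct and line up with what the paper does. The problem is the third paragraph, which is where the real content lies, and it contains a genuine gap.

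The step ``the image of this path is a bare path of the same length leaving $\mathcal{B}'$'' is false: an embedding can send a degree-$2$ vertex to a degree-$3$ vertex, so images of bare paths need not be bare. Your ruling out of the $\hat{D}_n$-block case leans on this: you write that ``$\hat{T}_n(\hat{v})$ is too short to carry the rest of the length-$2(\tilde{k}_n+1)$ bare path, its bare paths having length $\le \tilde{k}_n$.'' But once the image path reaches $v_{k+1}$ and turns into $\hat{T}_n(\hat{v})$, the remaining $\tilde{k}_n+1$ steps are just a walk along some path in $\hat{T}_n(\hat{v})$ (or beyond, into glued copies of $C_1$, $C_2$ hanging off its red/blue leaves). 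The diameter of $T_n(v)$ is not controlled by its bare-path lengths, so there is plenty of room; the bound you invoke simply does not apply to a non-bare image path. Likewise, ``from there one can reach only copies of the proper subtrees $T_n(r)$ and $T_n(v)$'' overlooks that the closure has glued further copies of $G_n(\vec{p}_1)$ and $G_n(\vec{p}_2)$ --- hence further copies of $S_n$, $T_n(r)$, $T_n(v)$, further spines, binary trees, etc.\ --- onto the red and blue leaves of $\hat{T}_n(\hat{v})$, and the image path may well exit into them.

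The paper avoids both of these issues by arguing about where $\rooot{T_n}$ (not the whole spine) lands, via the observation that every copy of $T_n$, $S_n$, $\hat{T}_n(r)$, $\hat{T}_n(\hat{v})$ in $\cl(G_n)$ has its $(k_n{+}1)$-ball equal to a bare extension of that copy, so that (since $T_n$ has no bare paths of length $>k_n$) landing $\rooot{T_n}$ in such a copy forces \emph{all} of $T_n$ into a bare extension of that copy. Then \ref{noembeddings}, \ref{Tembeddings}, \ref{Sembeddings} at step $n$ immediately exclude all but the root-preserving embedding of $T_n$ into the copy of $T_n$, and the case analysis over the possible positions of $f(\rooot{T_{n+1}})$ checks that in every non-trivial case $\rooot{T_n}$ fails to land on the root of a copy of $T_n$. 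For the $S_{n+1}$ direction, the paper additionally uses a quick degree contradiction at $v_{k+1}$, so the two directions are not quite symmetric as you suggest. You would need to replace the bare-path tracking in your third paragraph with some argument of this localised ``ball is a bare extension'' type to make the proof go through.
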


\begin{proof}
Recall that the promise closure was constructed by recursively adding copies of rooted trees $C_i$ and identifying their roots with promise leaves. For  the promise structure $\script{P}=\p{G_n, \vec{P}, \script{L}}$ on $G_n$ we have $C_1 = \tilde{T}_n(\vec{p_1})$ and $C_2 = \tilde{S}_n(\vec{p_2})$. 

Note that by \ref{kbigenough}, the image of any embedding $T_n \hookrightarrow U_{n+1}$ cannot contain a bare path of length $k_n + 1$. Also, by construction, every copy of $T_n$, $S_n$, $\hat{T}_n(r)$, or $\hat{T}_n(\hat{v})$ in $T_{n+1}$ has the property that its $(k_n+1)$-ball in $T_{n+1}$ is a bare extension to length $k_n + 1$ of this copy. Hence, if the root of $T_n$ embeds into some copy of $T_n$, $S_n$, $\hat{T}_n(r)$, or $\hat{T}_n(\hat{v})$, then the whole tree embeds into a bare extension of this copy. The same is true for $S_n$.

By \ref{noembeddings}, there are no embeddings of $T_n$ into a bare extension of $S_n$, or of $S_n$ into a bare extension of $T_n$. Moreover, since both $\hat{T}_n(r)$ and $\hat{T}_n(\hat{v})$ are subtrees of $T_n$, there is no embedding of $T_n$ or $S_n$ into bare extensions of them by \ref{noembeddings} and \ref{Tembeddings}. 

Thus, only the following embeddings are possible:
\begin{itemize}
\item $T_n$ embeds into a bare extension of a copy of $T_n$, or $S_n$ embeds into a bare extension of a copy of $S_n$. In both cases, the root must be preserved, as otherwise we contradict \ref{Tembeddings} or \ref{Sembeddings}.
\end{itemize}

Let $f \colon T_{n+1} \hookrightarrow U_{n+1}$ be an embedding. By Claim \ref{binarytrees}, $U_{n+1}$ contains no binary trees of height $b_n + 3$ apart from $D_n$, $\hat{D}_n$, and the copies of those two trees that were created by adding copies of $C_1$ and $C_2$. Consequently $f$ maps $D_n$ to one of these copies, mapping the root to the root. The neighbours of $\rooot{T_{n+1}}$ and $g$ must map to vertices of degree $3$ at distance two and three from the image of the root of $D_n$ respectively, which forces $f(\rooot{T_{n+1}}) \in R_{n+1}$.
If $f(\rooot{T_{n+1}})=\rooot{T_{n+1}}$ then we are done. 

Otherwise there are two possibilities for $f(\rooot{T_{n+1}})$. If $f(\rooot{T_{n+1}})$ is contained in a copy of $C_1$, then $\rooot{T_n}$ maps to a promise leaf other than the root in a copy of $T_n$, $S_n$, $\hat{T}_n(r)$, or $\hat{T}_n(\hat{v})$. If $f(\rooot{T_{n+1}}) = y$ or $f(\rooot{T_{n+1}})$ is contained in a copy of $C_2$, then $\rooot{T_n}$ maps to a copy of $\rooot{\hat{T}_n(r)}$ or some vertex of $\hat{T}_n(\hat{v})$. In both cases the root of $T_n$ does not map to the root of a copy of $T_n$, which is impossible by the first bullet point.

Finally, let $f \colon S_{n+1} \hookrightarrow U_{n+1}$ be an embedding. By the same arguments as above $f(\rooot{S_{n+1}}) \in B_{n+1}$. If $f$ fixes $\rooot{S_{n+1}}$, we are done.

Otherwise we have again two cases. If $f(\rooot{S_{n+1}}) = g$, or $f(\rooot{S_{n+1}})$ is contained in a copy of $C_1$, then $v_{k+1}$ (the neighbour of $\hat{v}$ on the long path) would have to map to a vertex of degree $2$, giving an immediate contradiction.  If $f(\rooot{S_{n+1}})$ is contained in a copy of $C_2$, then $\rooot{S_n}$ maps to a promise leaf other than the root in a copy of $T_n$, $S_n$, $\hat{T}_n(r)$, or $\hat{T}_n(\hat{v})$ which is also impossible by the observations in the bullet points.
\end{proof}

\begin{claim}
Let $U_{n+1}$ be as in Claim~\ref{floriansclaim}. Then there is no embedding of $T_{n+1}$ or $S_{n+1}$ into $U_{n+1}$ whose image contains vertices outside of $\cl (G_n)$, i.e.\ vertices that have been added to form the bare extension. 

Since a root-preserving embedding of a locally finite tree into itself must be an automorphism, this together with the previous claim implies \ref{Tembeddings}and \ref{Sembeddings}.
\end{claim}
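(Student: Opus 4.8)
The plan is to argue by contradiction, combining the previous claim with the observation that the vertices added in the bare extension form only ``thin'' appendages into which nothing branching can be mapped. Let $f$ be an embedding of $T_{n+1}$ into $U_{n+1}$ (the case of $S_{n+1}$ being symmetric). Since $T_{n+1}$ is connected its image lies in one component of $U_{n+1}$, and by the previous claim $f$ fixes $\rooot{T_{n+1}}$, so that component is the bare extension $U$ of $T_{n+1}$. First I would record the precise shape of $U\setminus\cl(G_n)$: every promise leaf $\ell\in(R_{n+1}\cup B_{n+1})\cap V(T_{n+1})$ is a leaf of $T_{n+1}$ (a copy of $\rooot{T_{n+1}}$, $g$, $y$ or $\rooot{S_{n+1}}$), and the bare extension attaches at $\ell$ a pendant path $z_1,\dots,z_k$ and a pendant leaf $\ell'$; hence, writing $\ell^{-}$ for the unique $\cl(G_n)$-neighbour of $\ell$, the part of $U$ lying beyond $\ell$ (on the side away from $\ell^{-}$) is just the path $\ell',\ell,z_1,\dots,z_k$. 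In particular it has no vertex of degree $\geq 3$, and (using Claim~\ref{binarytrees}) the copies of $D_n$ and $\hat D_n$ remain the only binary trees of height $b_{n+1}$ in $U$, all of them inside $\cl(G_n)$. Now assume for contradiction that the image of $f$ meets $U\setminus\cl(G_n)$; since that image is connected and contains $\rooot{T_{n+1}}\in\cl(G_n)$, there is a promise leaf $\ell$ with $\ell$, $\ell^{-}$ and at least the first feeler vertex $z_1$ (or the pendant $\ell'$) all in the image.

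Next I would run a propagation outwards from the root, proving by induction on $d_{T_{n+1}}(v,\rooot{T_{n+1}})$ that $f(v)\in V(\cl(G_n))$, with a strengthened induction hypothesis recording that $f$ also respects the \emph{coarse skeleton} of $T_{n+1}$: it carries each copy of $D_n$ or $\hat D_n$ onto a copy of $D_n$ or $\hat D_n$ (root to root), carries the long bare ``spine'' paths (through which $T_n$, $\hat S_n$, $\hat T_n(r)$, $\hat T_n(\hat v)$ and their further copies are attached to the new roots) onto such spine paths, and in particular carries leaves of $T_{n+1}$ onto leaves of $T_{n+1}$. The base of the induction is the previous claim, together with the remark that the copy of $D_n$ whose root lies at distance $3$ from the fixed root is the only binary tree of height $b_{n+1}$ in $U$ with root within distance $3$ of $\rooot{T_{n+1}}$, so $f$ maps it onto itself and fixes the short path between them. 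For the inductive step, let $v$ lie at distance $R+1$ with root-ward neighbour $v'$ at distance $R$, so $f(v')\in V(\cl(G_n))$; there is nothing to prove unless $f(v')$ is a promise leaf $\ell$, in which case $f(v)\in\{\ell^{-},z_1,\ell'\}$ and the last two must be excluded. Since feeler vertices never lie in $\cl(G_n)$, the root-ward neighbour $v''$ of $v'$ already satisfies $f(v'')=\ell^{-}$ by the induction hypothesis (when $v'=\rooot{T_{n+1}}$ there is no such $v''$, and one argues directly that mapping $u_{2k+2}$ into a feeler would force the infinite, branching tree $T_{n+1}-\rooot{T_{n+1}}$ into a path or a single vertex). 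But $\ell^{-}$ sits on a spine, at distance at most four from a copy of the root of $D_n$ or $\hat D_n$, whereas $\ell$ is a leaf of $T_{n+1}$ lying on no spine; by the skeleton clause this forces $v'$ itself to be a leaf of $T_{n+1}$, contradicting that $v'$ has the extra neighbour $v$. Hence $f(v)=\ell^{-}\in V(\cl(G_n))$, and one checks the skeleton clause is preserved; the induction goes through. Thus $f(T_{n+1})\subseteq\cl(G_n)$, and being a root-preserving embedding of $T_{n+1}$ into its own component $T_{n+1}$ of $\cl(G_n)$ it is an automorphism of $T_{n+1}$; the symmetric argument for $S_{n+1}$, together with the previous claim, yields \ref{Tembeddings} and \ref{Sembeddings}.

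The step I expect to be the main obstacle is making the ``coarse skeleton'' clause precise and carrying it through the induction past the infinitely many iterated gluings of copies of $C_1=\tilde T_n(\vec{p_1})$ and $C_2=\tilde S_n(\vec{p_2})$ out of which $T_{n+1}$ is built. The delicate point is that $f$ is permitted to permute genuinely isomorphic subtrees — which harmlessly keeps its image inside $\cl(G_n)$ — but must never ``slip'' onto a feeler; the rigidity one leans on at every stage is that $D_n$ and $\hat D_n$ are landmarks $f$ must respect (via Claim~\ref{binarytrees} and the fact that an embedding of trees does not increase distances), and that the spine paths are long bare paths whose two types are distinguished by their attachment patterns — in particular the $\tilde S_n$-spine carries the extra copy of $\hat T_n(\hat v)$ at distance $k+1$ from its root, a position at which the $\tilde T_n$-spine carries nothing. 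A secondary point of care is that the whole argument must be uniform in the length of the bare extension, so it should use only that the feelers are paths, never that they are long.
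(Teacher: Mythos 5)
Your approach is genuinely different from the paper's, and I think it has a real gap in the inductive step that you yourself half-acknowledge. Let me be concrete about where it is.

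Your induction hypothesis packages together two facts: (i) $f(v)\in V(\cl(G_n))$ for all $v$ within distance $R$ of the root, and (ii) a ``coarse skeleton'' clause, which among other things ``carries leaves of $T_{n+1}$ onto leaves of $T_{n+1}$.'' In the inductive step you consider $v'$ at distance $R$ with $f(v')=\ell$ a promise leaf, and you want to rule out $f(v)\in\{z_1,\ell'\}$. Your argument is: $\ell$ is a leaf of $T_{n+1}$, so ``by the skeleton clause this forces $v'$ itself to be a leaf,'' contradiction. But this is not what ``carries leaves onto leaves'' gives you: you need the \emph{converse}, namely that if $f(v')$ is a leaf of $T_{n+1}$ then $v'$ is a leaf of $T_{n+1}$, i.e.\ that promise leaves have only leaf preimages. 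That converse does not follow from degree considerations, since $\ell$ has degree $3$ in $U_{n+1}$ and an embedding only satisfies $\deg_{T_{n+1}}(v')\le\deg_{U_{n+1}}(f(v'))$; nor is it part of the clause as you stated it. So the hypothesis you maintain is not strong enough to close the step you need, and the strengthened form of the clause (that no spine vertex of $T_{n+1}$ can map onto a promise leaf) is precisely what would require proof. You flagged exactly this point as the main obstacle, and I agree — it is a genuine gap, not a matter of exposition. A secondary worry: even granting $f(v')\ne\ell$ when $v'$ is not a leaf, your ``one checks the skeleton clause is preserved'' dismisses the propagation of all the clauses ($D_n$ onto $D_n$, spines onto spines) through the infinitely many glued copies of $C_1$ and $C_2$, which is where the real work would have to go.

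For comparison, the paper's proof avoids any induction. It works with the preimage: supposing $f(w)\notin\cl(G_n)$, it first uses the observation that removing a feeler vertex leaves at most two components one of which is a path, together with degree counts and Claim~\ref{binarytrees}, to show $w$ cannot lie in a copy of $D_n$ or $\hat D_n$, cannot be a promise leaf, and hence lies inside a copy $C$ of $T_n$, $S_n$, $\hat T_n(r)$ or $\hat T_n(\hat v)$. It then bounds the maximal bare paths in $f(C)$ to conclude $f(C)$ avoids all such copies and all feelers, yet shows $f(r(C))$ must be a degree-$3$ vertex at prescribed distance from a nearest $D_n$/$\hat D_n$, a property only held inside those copies — a contradiction. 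This localisation of the preimage, rather than a global invariant propagated outward, is what makes the argument close cleanly; if you want to salvage your approach you would need to prove the strengthened clause that promise leaves can only be hit by leaves, and that step is essentially of the same difficulty as the paper's direct argument.
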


\begin{proof}
We prove this claim for $T_{n+1}$, the proof for $S_{n+1}$ is similar. Assume for a contradiction that there is a vertex $w$ of $T_{n+1}$ and an embedding $f \colon T_{n+1} \hookrightarrow U_{n+1}$ such that $ f(w) \notin \cl (G_n)$. 
By definition of bare extension, removing $f(w)$ from $U_{n+1}$ splits the component of $f(w)$ into at most two components, one of which is a path. 

Note first that $w$ does not lie in a copy of $D_n$ or $\hat{D}_n$, because these must map to binary trees of the same height by Claim~\ref{binarytrees}. Furthermore, all vertices in $R_{n+1} \cup B_{n+1}$ have a neighbour of degree $3$ whose neighbours all have degree $\geq 2$, thus $w \notin R_{n+1} \cup B_{n+1}$. Finally, only one component of $T_{n+1}-w$ can contain vertices of degree $3$. Consequently, $w$ must lie in a copy $C$ of $T_n$, $S_n$, $\hat{T}_n(r)$, or $\hat{T}_n(\hat{v})$.  

All maximal bare paths in the image $f(C)$ have length at most $ k=\tilde{k}_n$, so $f(C)$ cannot intersect any copies of $T_n$, $S_n$, $\hat{T}_n(r)$, or $(\hat{T}_n(\hat{v})+v_{k+1})$. Let $r$ be the root of $C$ (where $r=\hat{v}$ in the last case). Now $f(r)$ must have the following properties: it is a vertex of degree $3$, and the root of a nearest binary tree of height $b_{n+1}$ not containing $f(r)$ lies at distance $d$ from $f(r)$, where $5 \leq d \leq 2k+4$.

But the only vertices with these properties are contained in copies of $T_n$, $\hat{S}_n$, $\hat{T}_n(r)$, or $(\hat{T}_n(\hat{v})+v_{k+1})$. This contradicts the fact that $f(C)$ does not intersect any of these copies.
\end{proof}

\begin{claim}
The function $\varphi_{n+1}$ is a well-defined bijection extending $\varphi_n$, such that its domain and range do not intersect $R_{n+1} \cup B_{n+1}$. Hence, property \ref{XandY} holds for $\varphi_{n+1} \colon X_{n+1} \to Y_{n+1}$.
\end{claim}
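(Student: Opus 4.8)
The plan is to verify the bullet points of \ref{XandY} for the data $(X_{n+1},Y_{n+1},\varphi_{n+1})$ set up in (\ref{nextXandY}), using the induction hypothesis together with the description of $\cl(G_n)$ from Section~\ref{s:closure}; only the disjointness of $X_{n+1}\cup Y_{n+1}$ from $R_{n+1}\cup B_{n+1}$ calls for any real work, the other items being bookkeeping. First I would note that $v\in V(T_n)\setminus X_n$ by (\ref{defofx}), so $v\notin X_n$, whereas $\hat v$, the root of the fresh copy $\hat T_n(\hat v)$ of $T_n(v)$ adjoined to $\tilde T_n$'s counterpart $\tilde S_n$, lies in $V(\tilde S_n)\setminus V(S_n)$, hence $\hat v\notin Y_n\subseteq V(S_n)$. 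Since $\varphi_n\colon X_n\to Y_n$ is a bijection by \ref{XandY}, adjoining the single pair $(v,\hat v)$ gives a bijection $\varphi_{n+1}\colon X_n\cup\Set{v}\to Y_n\cup\Set{\hat v}$ restricting to $\varphi_n$ on $X_n$; in particular $X_n\subset X_{n+1}$, $Y_n\subset Y_{n+1}$, $\varphi_{n+1}\restriction X_n=\varphi_n$, and $\cardinality{X_{n+1}}=n+1=\cardinality{Y_{n+1}}$. For the enumeration clause, recall $n=2m$: by \ref{XandY} and the nesting of the $X_\bullet$ we get $\set{t_j}:{j\leq m-1}\subseteq X_{2m-1}\subseteq X_n$, so $t_0,\dots,t_{m-1}$ all lie in $X_n$ and the minimal index $i$ of (\ref{defofx}) is $\geq m$; thus either $v=t_m$ or $t_m\in X_n$, and in either case $\set{t_j}:{j\leq m}\subseteq X_{n+1}=X_{2m+1}$, which is the only instance of \ref{XandY} affected at this step (the $s_j$-clause is untouched, since $n+1$ is odd and so not of the form $2(N+1)$).

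For the disjointness, recall from (\ref{defofKnplus1andLnplus1}) and Definition~\ref{def_closurestructure} that $R_{n+1}\cup B_{n+1}=\cl(L_3)\cup\cl(L_4)$, where by (\ref{defofpromises}) the placeholder promises $\vec{p}_3,\vec{p}_4$ carry $L_3=\Set{\rooot{T_{n+1}},y}$ and $L_4=\Set{\rooot{S_{n+1}},g}$. Every element of $\cl(L_3)\cup\cl(L_4)$ other than these four leaves is a copy of one of them created inside a glued copy of $C_1=\tilde T_n(\vec{p}_1)$ or $C_2=\tilde S_n(\vec{p}_2)$, and so lies in $\cl(G_n)\setminus G_n$ by \ref{i:extension}. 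On the other hand $X_{n+1}\subseteq V(\tilde T_n)$ and $Y_{n+1}\subseteq V(\tilde S_n)$, so $X_{n+1}\cup Y_{n+1}\subseteq V(G_n)$; hence it suffices to check that none of $\rooot{T_{n+1}},y,\rooot{S_{n+1}},g$ lies in $X_{n+1}\cup Y_{n+1}$.

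The vertices $\rooot{T_{n+1}},g$ are leaves attached to interior vertices of the long path inside $\tilde T_n$, so they lie in $V(\tilde T_n)\setminus V(T_n)$, and likewise $y,\rooot{S_{n+1}}\in V(\tilde S_n)\setminus V(S_n)$; since $X_n\cup\Set{v}\subseteq V(T_n)$ and $Y_n\subseteq V(S_n)$, this rules all four out of $X_n\cup\Set{v}\cup Y_n$. Finally $\hat v$ lies in the copy $\hat T_n(\hat v)$, a fresh subtree of $\tilde S_n$ disjoint both from the separately attached leaves $y,\rooot{S_{n+1}}$ and from $V(\tilde T_n)\ni\rooot{T_{n+1}},g$, so $\hat v\notin L_3\cup L_4$ as well. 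This establishes the disjointness, and together with the previous paragraph gives \ref{XandY} for $\varphi_{n+1}\colon X_{n+1}\to Y_{n+1}$. The one point needing care is tracking the infinitely many promise leaves of $R_{n+1}\cup B_{n+1}$; property \ref{i:extension} is precisely what keeps all but the four named ones inside $\cl(G_n)\setminus G_n$, off the vertex set $V(G_n)$ that contains $X_{n+1}\cup Y_{n+1}$.
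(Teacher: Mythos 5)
Your proof is correct, and it reaches the key reduction the same way the paper does: it combines $X_{n+1}\cup Y_{n+1}\subseteq V(G_n)$ with the identity $\p{R_{n+1}\cup B_{n+1}}\cap V(G_n)=\p{\cl(L_3)\cup\cl(L_4)}\cap V(G_n)=L_3\cup L_4$, so that only the four placeholder leaves need to be excluded. Where you diverge is in \emph{how} you exclude them. The paper proves the strictly stronger statement $\p{X_{n+1}\cup Y_{n+1}}\cap\bigcup_{L\in\script{L}}L=\emptyset$, that is, disjointness from \emph{all} promise leaves including $L_1=\tilde R_n$ and $L_2=\tilde B_n$; this needs the observation (\ref{xdoesntmess}) that $v\notin R_n\cup B_n$ and the induction clause $(X_n\cup Y_n)\cap(R_n\cup B_n)=\emptyset$. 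You instead only show that the four explicit leaves $\rooot{T_{n+1}},y,\rooot{S_{n+1}},g$ and the vertex $\hat v$ never coincide, by purely positional reasoning (those four are fresh vertices in $V(\tilde T_n)\setminus V(T_n)$ or $V(\tilde S_n)\setminus V(S_n)$, while $X_n\cup\{v\}\subseteq V(T_n)$, $Y_n\subseteq V(S_n)$, and $\hat v$ lives in the separate copy $\hat T_n(\hat v)$). This is a genuinely more elementary argument that avoids any appeal to (\ref{xdoesntmess}); the trade-off is that the paper's stronger intermediate claim is tagged as (\ref{Nocreativityfornamesleft}) and invoked again in Claim~\ref{petersclaim} (to conclude that $v,\hat v$ are not mentioned in $\script{L}$ before extending the old hypomorphisms), so if one adopted your proof wholesale, that auxiliary fact would still need to be established separately.
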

\begin{proof}
By the choice of $x$ in (\ref{defofx}) and the definition of $\varphi_{n+1}\colon X_{n+1} \to Y_{n+1}$ in (\ref{nextXandY}), the first three items of property~\ref{XandY} hold.

Since $v$ does not lie in $R_n \cup B_n$ by (\ref{xdoesntmess}), it follows by our construction of the promise structure $\script{P}=\p{G_n, \vec{P}, \script{L}}$ in (\ref{defoftildeBn}) and (\ref{defofpromises}) that neither $v$ nor $\hat{v}=\varphi_{n+1}(v)$ appear as promise leaves in $\script{L}$. Furthermore, by the induction hypothesis, $(X_n \cup Y_n) \cap (R_n \cup B_n) = \emptyset$, so no vertex in $(X_n \cup Y_n)$ appears as a promise leaf in $\script{L}$ either. Thus, in formulas, 
\begin{align}
\p{X_{n+1} \cup Y_{n+1}} \cap \bigcup _{L \in \script{L}} L = \emptyset. \numberthis \label{Nocreativityfornamesleft}
\end{align}
In particular, since
$$\p{R_{n+1} \cup B_{n+1}} \cap G_n = \p{\cl(L_3) \cup \cl(L_4) }\cap G_n = L_3 \cup L_4,$$
and $X_{n+1} \cup Y_{n+1} \subset G_n$, we get $(X_{n+1} \cup Y_{n+1}) \cap (R_{n+1}  \cup B_{n+1}) = \emptyset$. Thus, also the last item of \ref{XandY} is verified. 
\end{proof}

\begin{claim}\label{petersclaim}
There is a family of isomorphisms $\script{H}_{n+1} = \set{h_{n+1,x}}:{x \in X_{n+1}}$ witnessing that $T_{n+1} - x$ and $S_{n+1} - \varphi_{n+1}(x)$ are isomorphic for all $x \in X_{n+1}$, such that $h_{n+1,x}$ extends $h_{n,x}$ for all $x \in X_n$.  Hence, property \ref{hypomorphism} holds.
\end{claim}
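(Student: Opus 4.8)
The plan is to handle the freshly added vertex $v$ and the old vertices $x \in X_n$ separately, driving both cases with one structural description of $T_{n+1}$ and $S_{n+1}$ followed by a ``colour-by-colour'' extension of a base isomorphism.

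The structural fact I would establish first is the following. Let $\mathcal R := \cl(G_n)(\vec p_1)$ and $\mathcal B := \cl(G_n)(\vec p_2)$ be the rooted trees which, by property~\ref{i:keepspromises} of Proposition~\ref{p:closure}, sit beyond every red, respectively blue, promise leaf of $\cl(G_n)$. Unravelling the component structure of $\cl(G_n) = T_{n+1} \sqcup S_{n+1}$, this says that $T_{n+1}$ is exactly $T_n$ with a copy of $\mathcal R$ glued at each leaf in $R_n \cap V(T_n)$ (identifying the leaf with the root of the copy) and a copy of $\mathcal B$ glued at each leaf in $B_n \cap V(T_n)$, while $S_{n+1}$ is $S_n$ with copies of the \emph{same} two rooted trees glued on in the \emph{same} way. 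The point needing care here is that $\rooot{T_n}$ must be counted among the red leaves: the long path, the copy $\hat S_n$ and the binary tree $D_n$ adjoined in passing to $\tilde T_n$ are precisely the interior of the copy of $\mathcal R$ sitting at $\rooot{T_n}$, because $\vec p_1$ is the edge of $T_n$ at $\rooot{T_n}$ (and symmetrically for $\rooot{S_n}$ and $\mathcal B$). Moreover, by the strengthened property~\ref{i:keepslabelledpromises}, each such copy comes with a fixed partition of some of its leaves into ``yellow'' ($R_{n+1}$) and ``green'' ($B_{n+1}$) leaves that is the same on every copy, and since $R_{n+1} = \cl(L_3)$, $B_{n+1} = \cl(L_4)$ while $T_n$ and $S_n$ carry no leaves of these colours, every vertex of $R_{n+1} \cup B_{n+1}$ lies inside one of the glued-on copies. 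Establishing this description is essentially the only real work, and where I expect the main difficulty to lie: one must check that deleting a vertex of $\tilde T_n$ or $\tilde S_n$ that is never a promise leaf — such as $v$, $\hat v$, or an old $x \in X_n$ — commutes with the closure construction of Section~\ref{s:closure}; that the subtrees beyond the red leaves of $\tilde T_n$ and of $\tilde S_n$ really are the same rooted tree $\mathcal R$, the root being handled uniformly with the other red leaves; and that the yellow/green labelling agrees across all copies, which is exactly what property~\ref{i:keepslabelledpromises} provides.

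Granting the structural fact, the case $x \in X_n$ is routine. By the induction hypothesis~\ref{hypomorphism}, $h_{n,x} \colon T_n - x \to S_n - \varphi_n(x)$ maps $R_n \cap V(T_n)$ onto $R_n \cap V(S_n)$ and $B_n \cap V(T_n)$ onto $B_n \cap V(S_n)$, and since $x \notin R_n \cup B_n$ by~\ref{XandY}, deleting $x$ leaves every gluing point intact, so $T_{n+1} - x$ (resp.\ $S_{n+1} - \varphi_{n+1}(x)$) is just $T_n - x$ (resp.\ $S_n - \varphi_n(x)$) with the same copies of $\mathcal R$ and $\mathcal B$ glued on. I would then let $h_{n+1,x}$ agree with $h_{n,x}$ on $T_n - x$ and, on the copy of $\mathcal R$ (resp.\ $\mathcal B$) rooted at a red (resp.\ blue) leaf $\ell$, be the canonical label-preserving isomorphism onto the copy rooted at $h_{n,x}(\ell)$. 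These pieces agree on the shared leaves, so $h_{n+1,x}$ is an isomorphism $T_{n+1} - x \to S_{n+1} - \varphi_{n+1}(x)$ extending $h_{n,x}$; and it sends $R_{n+1} \cap V(T_{n+1})$ onto $R_{n+1} \cap V(S_{n+1})$ and $B_{n+1} \cap V(T_{n+1})$ onto $B_{n+1} \cap V(S_{n+1})$ because those sets lie entirely in the glued-on copies, on which $h_{n+1,x}$ preserves the labelling.

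For $x = v$ I would run the identical extension, but starting from the $\vec P$-respecting isomorphism $h \colon \tilde T_n - v \to \tilde S_n - \hat v$ of~(\ref{defofhn+1}) in place of $h_{n,x}$. Since neither $v$ — by~(\ref{xdoesntmess}) — nor $\hat v$ is a promise leaf, $T_{n+1} - v$ and $S_{n+1} - \hat v$ are obtained from $\tilde T_n - v$ and $\tilde S_n - \hat v$ by gluing on copies of $\mathcal R$ and $\mathcal B$, and $h$ already matches red leaves with red and blue with blue, so it extends over those copies by the canonical label-preserving isomorphisms to a map $h_{n+1,v} \colon T_{n+1} - v \to S_{n+1} - \hat v$, which respects $R_{n+1}$ and $B_{n+1}$ because $h$ respects $L_3$ and $L_4$; equivalently, one may note that $h$ is an isomorphism of promise structures and appeal to the functoriality of $\cl(\cdot)$ already implicit in Lemma~\ref{l:propertiesofH}. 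Setting $\script{H}_{n+1} = \set{h_{n+1,x}}:{x \in X_{n+1}}$ then gives property~\ref{hypomorphism}.
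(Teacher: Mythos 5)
Your proof is correct and follows essentially the same approach as the paper: you describe $T_{n+1}$ and $S_{n+1}$ as $T_n$/$S_n$ (resp.\ $\tilde{T}_n$/$\tilde{S}_n$) decorated at red and blue leaves with rooted copies of $\cl(G_n)(\vec{p}_1)$ and $\cl(G_n)(\vec{p}_2)$, and extend the base isomorphisms colour-by-colour using property~\ref{i:keepslabelledpromises}. You have also correctly isolated the subtle point the paper handles via~(\ref{bla123}), namely that $\rooot{T_n}$ must be treated as another red leaf so that the long path, $\hat{S}_n$, $D_n$, etc.\ are absorbed into the copy of $\cl(G_n)(\vec p_1)$ sitting beyond it.
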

\begin{proof}
There are four things to be verified for this claim. Firstly, we need an isomorphism $h_{n+1,v}$ witnessing that $T_{n+1} - v$ and $S_{n+1} - \hat{v}$ are isomorphic. Secondly, we need to \emph{extend} all previous isomorphisms $h_{n,x}$ between $T_n - x$ and $S_n - \varphi_n(x)$ to $T_{n+1} - x$ and $S_{n+1} - \varphi_n(x)$. This will take care of the first item of \ref{hypomorphism}. To also comply with the remaining two items, we need to make sure that each isomorphism in 
$$\script{H}_{n+1} = \set{h_{n+1,x}}:{x \in X_{n+1}}$$
maps leaves in $R_{n+1} \cap V(T_{n+1})$ bijectively to leaves in $R_{n+1} \cap V(S_{n+1})$, and similarly for $B_{n+1}$.

To find the first isomorphism, note that by construction of the promise structure $\script{P}=\p{G_n, \vec{P}, \script{L}}$ on $G_n$ in (\ref{defoftildeBn}), and properties~\ref{i:extension} and \ref{i:keepslabelledpromises} of the promise closure, the trees $T_{n+1}$ and $S_{n+1}$ are obtained from $\tilde{T}_n$ and $\tilde{S}_n$ by attaching at every leaf $r \in \tilde{R}_n$ a copy of the rooted tree $\cl(G_n)(\vec{p}_1)$, and by attaching at every leaf $b \in \tilde{B}_n$ a copy of the rooted tree $\cl(G_n)(\vec{p}_2)$.

By (\ref{Nocreativityfornamesleft}), neither $v$ nor $\varphi_{n+1}(v)$ are mentioned in $\script{L}$. As observed in (\ref{defofhn+1}),  there is a $\vec{P}$-respecting isomorphism 
$$h \colon \tilde{T}_n - v \to \tilde{S}_n - \varphi_{n+1}(v).$$
In other words, $h$ maps promise leaves in $L_i \cap V(\tilde{T}_n)$ bijectively to the promise leaves in $L_i \cap V(\tilde{S}_n)$ for all  $i=1,2,3,4$. Our plan is to extend $h$ to an isomorphism between $T_{n+1}-v$ and $S_{n+1}-\varphi_n(v)$ by mapping the corresponding copies of $\cl(G_n)(\vec{p}_1)$ and $\cl(G_n)(\vec{p}_2)$ attached to the various red and blue leaves to each other.

Formally, by \ref{i:keepslabelledpromises} there is for each $\ell \in \p{\tilde{R}_n \cup \tilde{B}_n} \cap V(T)$ a $\cl(\vec{P})$-respecting isomorphism of rooted trees
\[
\cl(G_n)(\vec{q}_{\ell}) \cong \cl(G_n)(\vec{q}_{h(\ell)}).
\]
Therefore, by combining the isomorphism $h$ between $\tilde{T}_n - v$ and $\tilde{S}_n - \varphi_{n+1}(v)$ with these isomorphisms between each $\cl(G_n)(\vec{q}_{\ell})$ and $\cl(G_n)(\vec{q}_{h(\ell)})$ we get a $\cl(\vec{P})$-respecting isomorphism 
$$h_{n+1,v} \colon T_{n+1} - v \to S_{n+1} - \varphi_{n+1}(v).$$
And since $R_{n+1}$ and $B_{n+1}$ have been defined in (\ref{defofKnplus1andLnplus1}) to be the promise leaf sets of $\cl(\script{P})$, by definition of $\cl(\vec{P})$-respecting (Def.~\ref{promiserespectingmap}), the image of $R_{n+1} \cap V(T_{n+1})$ under $h_{n+1,v}$ is $R_{n+1} \cap V(S_{n+1})$, and similarly for $B_{n+1}$.

It remains to extend the old isomorphisms in $\script{H}_n$. As argued in (\ref{bla123}), both trees $T_{n+1}$ and $S_{n+1}$ are leaf extensions of $T_n$ and $S_n$ at $R_n \cup B_n$ respectively. By property \ref{i:keepslabelledpromises}, these leaf extensions are obtained by attaching at every leaf $r \in R_n$ a copy of the rooted tree $\cl(G_n)(\vec{p}_1)$, and similarly by attaching at every leaf $b \in B_n$ a copy of the rooted tree $\cl(G_n)(\vec{p}_2)$.

By induction assumption \ref{hypomorphism}, for each $x \in X_n$ the  isomorphism
$$ h_{n,x} \colon T_n - x \rightarrow S_n - \varphi_n(x) $$
maps the red leaves of $T_n$ bijectively to the red leaves of $S_n$, and the blue leaves of $T_n$ bijectively to the blue leaves of $S_n$. Thus, by property \ref{i:keepslabelledpromises}, there are $\cl(\vec{P})$-respecting isomorphisms of rooted trees
$$ \cl(G_n)(\vec{q}_\ell) \cong \cl(G_n)(\vec{q}_{h_{n,x}(\ell)}) $$
for all $\ell \in (R_n \cup B_n) \cap V(T_n)$. By combining the isomorphism $h_{n,x}$ between $T_n - x$ and $S_n - \varphi_n(x)$ with these isomorphisms between each $\cl(G_n)(\vec{q}_{\ell})$ and $\cl(G_n)(\vec{q}_{h_{n,x}(l)})$, we obtain a $\cl(\vec{P})$-respecting extension 
$$h_{n+1,x} \colon T_{n+1} - x \rightarrow S_{n+1} - \varphi_n(x).$$
As before, by definition of $\cl(\vec{P})$-respecting, the image of $R_{n+1} \cap V(T_{n+1})$ under $h_{n+1,x}$ is $R_{n+1} \cap V(S_{n+1})$, and similarly for $B_{n+1}$.

Finally, by construction we have $h_{n+1,x} \restriction (T_n-x) = h_{n,x}$ for all $x \in X_n$ as desired. The proof is complete.
\end{proof}

\section{The trees are also edge-hypomorphic}

In this final section, we briefly indicate why the trees $T$ and $S$ yielded by our strategy above are automatically edge-hypomorphic: we claim the correspondence 
$$\psi \colon e(x) \mapsto e(\varphi(x))$$
as introduced in (\ref{defofedge}) and (\ref{defofhatedge}) is an edge-hypomorphism between $T$ and $S$. For this, we need to verify that 
\begin{enumerate}[label=(\alph*)]
\item $\psi$ is a bijection between $E(T)$ and $E(S)$, and that 
\item the maps $h_x \cup \Set{\langle x,\varphi(x) \rangle} \colon G - e(x) \to H - e(\varphi(x))$ are isomorphisms.
\end{enumerate}
Regarding (b), observe that the map $h$ as defined in (\ref{defofhn+1}) yields, by construction, also a $\vec{P}$-respecting isomorphism
$$
h \cup \Set{( v,\hat{v} )} \colon \tilde{T}_n - e(v) \to \tilde{S}_n - e(\hat{v}),
$$
and from there, the arguments are entirely the same as in the previous section.

For (a), we use the canonical bijection between the edge set of a rooted tree, and its vertices other than the root; namely the bijection mapping every such vertex to the first edge on its unique path to the root. Thus, given the enumeration of $V(T_n)$ and $V(S_n)$ in \ref{enumerations}, we obtain corresponding enumerations of $E(T_n)$ and $E(S_n)$, and since the rooted trees $T_n$ and $S_n$ are order-preserving subtrees of the rooted trees $T_{n+1}$ and $S_{n+1}$ (cf.\ Figure~\ref{constructionfigure2}), it follows that also our enumerations of $E(T_n)$ and $E(S_n)$ extend the enumerations of $E(T_{n-1})$ and $E(S_{n-1})$ respectively. But now it follows from \ref{XandY} and the definition of $\psi$ that by step $2(n+1)$ we have dealt with the first $n$ edges in our enumerations of $E(T)$ and $E(S)$ respectively.

\bibliographystyle{plain}
\bibliography{reconstruction}

\begin{thebibliography}{10}

\bibitem{A81}
T.~Andreae.
\newblock On the {R}econstruction of {L}ocally {F}inite {T}rees.
\newblock {\em Journal of Graph Theory}, 5(2):123--135, 1981.

\bibitem{ANDREAE1982258}
T.~Andreae.
\newblock Simpler counterexamples to the edge-reconstruction conjecture for
  infinite graphs.
\newblock {\em Journal of Combinatorial Theory, Series B}, 32(3):258 -- 263,
  1982.

\bibitem{ANDREAE1994183}
T.~Andreae.
\newblock On reconstructing rooted trees.
\newblock {\em Journal of Combinatorial Theory, Series B}, 62(2):183 -- 198,
  1994.

\bibitem{BH74}
J.~A. Bondy and R.~Hemminger.
\newblock Reconstructing infinite graphs.
\newblock {\em Pacific Journal of Mathematics}, 52(2):331--340, 1974.

\bibitem{BH77}
J.~A. Bondy and R.~L. Hemminger.
\newblock Graph reconstruction -- a survey.
\newblock {\em Journal of Graph Theory}, 1(3):227--268, 1977.

\bibitem{BEHLP17}
N.~Bowler, J.~Erde, P.~Heinig, F.~Lehner, and M.~Pitz.
\newblock Non-reconstructible locally finite graphs.
\newblock Submitted, arXiv:1611.04370.

\bibitem{D10}
R.~Diestel.
\newblock {\em {Graph Theory}}.
\newblock Springer, 4th edition, 2010.

\bibitem{F69}
J.~Fisher.
\newblock A counterexample to the countable version of a conjecture of {U}lam.
\newblock {\em Journal of Combinatorial Theory}, 7(4):364--365, 1969.

\bibitem{FGH72}
J.~Fisher, R.~L. Graham, and F.~Harary.
\newblock A simpler counterexample to the reconstruction conjecture for
  denumerable graphs.
\newblock {\em Journal of Combinatorial Theory, Series B}, 12(2):203--204,
  1972.

\bibitem{H64}
F.~Harary.
\newblock On the reconstruction of a graph from a collection of subgraphs.
\newblock In {\em Theory of Graphs and its Applications (Proc. Sympos.
  Smolenice, 1963)}, pages 47--52, 1964.

\bibitem{HSS72}
F.~Harary, A.~J. Schwenk, and R.~L. Scott.
\newblock On the reconstruction of countable forests.
\newblock {\em Publ. Inst. Math.}, 13(27):39--42, 1972.

\bibitem{kelly1957}
P.J. Kelly.
\newblock A congruence theorem for trees.
\newblock {\em Pacific J. Math.}, 7(1):961--968, 1957.

\bibitem{NW87}
C.~St. J.~A. Nash-Williams.
\newblock Reconstruction of locally finite connected graphs with at least three
  infinite wings.
\newblock {\em Journal of Graph Theory}, 11(4):497--505, 1987.

\bibitem{NW91}
C.~St. J.~A. Nash-Williams.
\newblock Reconstruction of infinite graphs.
\newblock {\em Discrete Mathematics}, 95(1):221--229, 1991.

\bibitem{NW912}
C.~St. J.~A. Nash-Williams.
\newblock Reconstruction of locally finite connected graphs with two infinite
  wings.
\newblock {\em Discrete Mathematics}, 92(1):227--249, 1991.

\bibitem{T78}
C.~Thomassen.
\newblock Reconstructing 1-coherent locally finite trees.
\newblock {\em Commentarii Mathematici Helvetici}, 53(1):608--612, 1978.

\end{thebibliography}
\end{document}